\documentclass{amsart}

\title{Periods in equivariant and motivic contexts}
\author{Martin Gallauer}
\address{Martin Gallauer, Warwick Mathematics Institute, Coventry CV4 7AL, UK}
\email{martin.gallauer@warwick.ac.uk}
\urladdr{https://warwick.ac.uk/mgallauer}
\address{Ivo Dell'Ambrogio, Univ. Artois, UR 2462, Laboratoire de Mathématiques de Lens, F-62300 Lens, France}
\email{ivo.dellambrogio@univ-artois.fr}
\urladdr{https://idellambrogio.github.io}

\makeatletter
\let\@wraptoccontribs\wraptoccontribs
\makeatother

\contrib[with an appendix by]{Ivo Dell'Ambrogio}

\makeatletter
\newcommand{\sectionauthor}[1]{{\parindent0pt\center\linespread{1.1}\large\scshape#1\par\nobreak\vspace*{35pt}}
  \@afterheading }
\makeatother

\usepackage[utf8]{inputenc}
\usepackage{amsmath,amssymb,amsthm}
\usepackage{mathtools}
\usepackage[usenames,dvipsnames]{xcolor}

\usepackage{hyperref}
\hypersetup{colorlinks=true,linkcolor={Brown},citecolor={Brown},urlcolor={Brown},bookmarksdepth=2}
\usepackage{float}
\usepackage[
backend = biber,
style = alphabetic,
citestyle = alphabetic,
maxbibnames = 99,
url = false]{biblatex}

\addbibresource{ref.bib}

\usepackage[compress]{cleveref}
\usepackage{thmtools, thm-restate}
\declaretheoremstyle[spaceabove=.8\baselineskip,spacebelow=.8\baselineskip,headfont=\bfseries,notefont=\normalfont,bodyfont=\itshape,postheadspace=.5em]{thms}

\declaretheoremstyle[spaceabove=.8\baselineskip,spacebelow=.8\baselineskip,headfont=\bfseries,notefont=\normalfont,bodyfont=\normalfont,postheadspace=.5em]{defn}

\numberwithin{equation}{section}\theoremstyle{thms}
\newtheorem{thrm}[equation]{Theorem}
\newtheorem{cor}[equation]{Corollary}

\newtheorem{lem}[equation]{Lemma}
\newtheorem{prop}[equation]{Proposition}

\theoremstyle{defn}
\newtheorem{cns}[equation]{Construction}
\newtheorem{defn}[equation]{Definition}
\newtheorem*{defn*}{Definition}
\newtheorem{exa}[equation]{Example}

\newtheorem{notn}[equation]{Convention}
\newtheorem*{notn*}{Notation}
\newtheorem{rmk}[equation]{Remark}

\crefname{prop}{Proposition}{Propositions}
\crefname{hyp}{Hypothesis}{Hypotheses}
\crefname{cor}{Corollary}{Corollaries}
\crefname{lem}{Lemma}{Lemmata}
  \crefname{cns}{Construction}{Constructions}
  \crefname{def}{Definition}{Definitions}
  \crefname{rmk}{Remark}{Remarks}
  \crefname{notn}{Convention}{Conventions}
  \crefname{exa}{Example}{Examples}
  \crefname{thrm}{Theorem}{Theorems}
  \crefname{thrmprime}{Theorem'}{Theorem'}
\usepackage[shortlabels,inline]{enumitem}

\usepackage[T1]{fontenc} \usepackage[osf]{Baskervaldx} \usepackage[vvarbb]{newtxmath} \usepackage[cal=boondoxo]{mathalfa} \usepackage{bbold}\usepackage{xspace}

\usepackage{tikz}
\usetikzlibrary{arrows,shapes,positioning,backgrounds,cd}

\tikzset{
commutative diagrams/.cd,
arrow style=tikz,
diagrams={>=latex}}

\tikzset{-,>=stealth',shorten >=2pt,shorten <=2pt,
  main node/.style={circle,fill=blue!20,inner sep=1.6pt,font=\sffamily\tiny\bfseries},
  split node/.style={circle,fill=red!20,inner sep=1.6pt,font=\sffamily\tiny\bfseries},
  lower node/.style={circle,fill=orange!20,inner sep=1.6pt,font=\sffamily\tiny\bfseries}
}

\usepackage[all]{xy}

\newcommand{\nc}{\newcommand}

\nc{\loccit}{{\sl loc.\ cit.}\xspace}
\nc{\inv}{^{-1}}
\nc{\unit}{\mathbb{1}}
\nc{\cat}{\mathscr}
\nc{\Catex}{\mathrm{Cat}^{\mathrm{\footnotesize ex}}}
\nc{\Catperf}{\mathrm{Cat}^{\mathrm{\footnotesize perf}}}
\nc{\el}{\mathrm{ell}}
\nc{\gr}{\mathrm{gr}}
\nc{\Hm}{\mathrm{H}}
\nc{\iso}[1]{\cP_{\mathrm{iso}}(#1)}
\nc{\cP}{\cat{P}}
\nc{\cQ}{\cat{Q}}
\nc{\cO}{\cat{O}}
\nc{\cI}{\cat{I}}
\nc{\cJ}{\cat{J}}
\nc{\cK}{\cat{K}}
\nc{\cL}{\cat{L}}
\nc{\cM}{\cat{M}}
\nc{\cR}{\cat{R}}
\nc{\cS}{\cat{S}}
\nc{\cT}{\cat{T}}
\nc{\cV}{\cat{V}}
\nc{\gm}{\mathfrak{m}}
\nc{\gp}{\mathfrak{p}}
\nc{\gq}{\mathfrak{q}}
\nc{\pgeq}{\stackrel{p}{\geq}}
\nc{\pleq}{\stackrel{p}{\leq}}
\nc{\psim}{\stackrel{p}{\sim}}
\nc{\ZZ}{\mathbb{Z}}
\nc{\Aff}{\mathbb{A}}
\nc{\EE}{\mathbb{E}}
\nc{\FF}{\mathbb{F}}
\nc{\KK}{\mathbb{K}}
\nc{\PP}{\mathbb{P}}
\nc{\QQ}{\mathbb{Q}}
\nc{\RR}{\mathbb{R}}
\nc{\CC}{\mathbb{C}}
\nc{\resp}[1]{(resp.\xspace #1)}
\nc{\sto}{\rightsquigarrow}
\nc{\Weyl}[2]{{#1}/\!\!/{#2}}

\nc{\dmo}{\DeclareMathOperator}
\dmo{\calg}{CAlg}
\dmo{\car}{char}
\dmo{\cof}{cof}
\dmo{\colim}{colim}
\dmo{\comp}{comp}
\dmo{\hend}{end}
\dmo{\fmod}{mod}
\dmo{\Fun}{Fun}
\dmo{\Gal}{Gal}
\dmo{\gen}{gen}
\dmo{\ho}{ho}
\dmo{\Infl}{Infl}
\dmo{\Mod}{Mod}
\dmo{\id}{id}
\dmo{\per}{per}
\dmo{\Per}{Per}
\dmo{\Perf}{Perf}
\dmo{\Pic}{Pic}
\dmo{\QCoh}{QCoh}
\dmo{\rep}{rep}
\dmo{\Ind}{Ind}
\dmo{\Res}{Res}
\dmo{\SH}{SH}
\dmo{\Sp}{Sp}
\dmo{\DM}{DM}
\nc{\et}{\acute{e}t}
\dmo{\DMet}{DM_{\et}}
\dmo{\DATM}{DATM}
\dmo{\DAM}{DAM}
\dmo{\DPerm}{DPerm}
\dmo{\free}{free}
\dmo{\proj}{proj}
\dmo{\geom}{gm}
\dmo{\Db}{D_b}
\dmo{\D}{D}
\dmo{\Proj}{Proj}
\dmo{\Spc}{Spc}
\dmo{\Spec}{Spec}
\dmo{\Specgr}{Spec^{\gr}}
\dmo{\Spch}{Spc^{\bullet}}
\let\Spech\Spec
\let\Spch\Spc
\dmo{\stmod}{stmod}
\dmo{\supp}{supp}

\nc{\martin}[1]{{\color{brown}{#1}}}

\newcommand{\ie}{{i.e.\ }}
\dmo{\End}{End}
\dmo{\cone}{cone} \dmo{\2Pic}{2Pic} \nc{\Kth}{\mathrm{K}} \dmo{\Id}{Id}
\newcommand{\hmg}{\textrm{hmg}} \dmo{\Mor}{Mor}

\begin{document}
\maketitle{}

\begin{abstract}
We define the period as a multiplicative characteristic of stably symmetric monoidal $\infty$-categories, develop its basic properties, and study many examples, with a focus on `ordinary' equivariant and motivic homotopy theory.
We apply the findings to isotropic points in motivic tt-geometry.
\end{abstract}
\setcounter{tocdepth}{1}
\tableofcontents{}
\section{Introduction}
\label{sec:introduction}

\subsection{Characteristics}
\label{sec:characteristics}

Let $R$ be a (commutative, unital) ring. A basic invariant is its characteristic, the non-negative integer satisfying
\[
\langle\car(R)\rangle=\ker\left(\ZZ\xrightarrow{\mathrm{can}} R\right).
\]
Being defined in terms of addition and multiplication, it propagates along any ring morphism $R\to S$, meaning that $\car(S)\bigm\vert\car(R)$.
In particular, it leads to the prime characteristics $\gp\in\Spec(\ZZ)$ that partition fields into an infinite countable family.
The characteristic is a fundamental organizing principle in algebraic geometry, describing specific techniques and phenomena such as `mixed characteristic', `characteristic~$p$' or `(equi-)characteristic~$0$'.

Now let $\cK$ be an (essentially small) stably symmetric monoidal $\infty$-category, a categorification of a ring.
There are several invariants that could serve functions similar to that of~$\car(R)$.
One is to replace the initial ring~$\ZZ$ by the initial categorified ring~$\Sp^\omega$ of finite spectra and define the characteristic as the kernel of
\begin{equation}
\label{eq:initial-map}
\Sp^\omega\xrightarrow{\mathrm{can}}\cK.
\end{equation}
This kernel is a thick $\otimes$-ideal that can be identified with a subset of $\Spec(\Sp^\omega)$.
Here, we use that categorified rings possess an associated (affine) \emph{tt-scheme}: as a space this was defined by Balmer~\cite{balmer:spectrum} (for tensor triangulated categories); in~\cite{aoki2025higherzariskigeometry} it was enhanced to a structured space in the sense of~\cite{lurie:dagV}.
In particular, the prime characteristics are now pairs $(\gp,n)$ where $\gp\in\Spec(\ZZ)$ as before, and (if $\gp\neq 0$) $1\leq n\leq \infty$ is the chromatic height~\cite{MR1652975}.
Undoubtedly this is a useful and important organizing principle in tt-geometry.

\subsection{Periods}
\label{sec:periods}

In this article we are interested in another such basic invariant, of multiplicative rather than additive type, obtained from~\eqref{eq:initial-map} by passing to invertible objects:
\begin{defn}
The \emph{period} of~$\cK$ is the non-negative integer satisfying
\[
\langle\per(\cK)\rangle=\ker\left(\ZZ=\Pic(\Sp^\omega)\xrightarrow{\mathrm{can}}\Pic(\cK)\right).
\]
\end{defn}
In other words, it is the smallest $d>0$ such that $\Sigma^d\unit\cong\unit$ (or $0$ if no such~$d$ exists).
While the period of many familiar examples like spectra or derived categories is~$0$, one doesn't need to look far for periodic ones.
Periodic ring spectra are ubiquitous in stable homotopy theory and their perfect modules are obviously periodic; this applies, among other things, to most of the tt-fields for spectra (Morava $K$-theories).
Another source of examples are periodic derived categories, orbit categories and categories of matrix factorizations.
As we will expand on below, many tt-fields in modular representation theory and motives also are, or ought to be, periodic.

Being defined in terms of $\otimes$ and suspension, the period propagates along any morphism $\cK\to\cL$, meaning that $\per(\cL)\bigm\vert\per(\cK)$.
This leads to a stratification by periods of~$\Spec(\cK)$ and more generally, of any tt-scheme, just as for the characteristic.
We also believe that $\cK$ being periodic or non-periodic determines some of the important phenomena one encounters and techniques one employs.
For example, t-structures and weight structures are two of the most useful tools for getting to grips with~$\cK$.
But they can \emph{never} be of use if~$\cK$ is periodic.
On the other hand, in the latter case the periodicity itself offers a reduction in complexity of some form, something that is often exploited in stable homotopy theory.
We will not try to substantiate this dichotomy here, and instead illustrate the use of periods as an organizing principle in examples---to which we now turn.

\subsection{Motivic tt-geometry}
\label{sec:motivation}
At the origin of this article was the construction of iso\-tro\-pic realizations due to Vishik~\cite{MR4454343} and the subsequent discovery~\cite{MR4768634,MR4905541} of an associated large new class of \emph{isotropic} points on $\Spec(\SH(\FF)^\omega)$ and~$\Spec(\DM(\FF;\ZZ/\ell)^\omega)$ (for $\FF$ a field of characteristic zero, $\ell$ any prime number).
The analysis of these two spectra, associated to Morel--Voevodsky's $\Aff^1$-motivic spectra and Voevodsky's derived category of motives, respectively, constitutes arguably the major open problem in motivic tt-geometry and the cited works made significant inroad.

To give an idea of just how significant, we mention for example that finitely many points on~$\Spec(\DM(\RR;\ZZ/2)^\omega)$ had been known before, whereas the set of isotropic points (and indeed the whole space) have cardinality~$2^{\mathfrak{c}}$ ($\mathfrak{c}$ denoting the continuum).
A similar statement could be made about~$\SH(\RR)$ and over other fields.
Our goal was to understand these developments and place them in the context of what was already known.

For example, let $\iota\colon\DAM(\FF;\ZZ/\ell)\hookrightarrow\DM(\FF;\ZZ/\ell)$ denote the inclusion of Artin motives (i.e., the localizing subcategory generated by zero-dimensional varieties).
In joint work with Balmer~\cite{MR4946248,MR4866349}, we determined the space underlying $\Spec(\DAM(\FF;\ZZ/\ell)^\omega)$ completely.
It is itself a typically complicated and high-dimen\-sio\-nal spectral space that can be described in terms of the subgroup structure of the absolute Galois group and the group cohomology of its subquotients.
We establish in \Cref{sec:isotropic-points} that under the induced map
\begin{equation}
\label{eq:DAM-vs-DM}
\iota^*\colon\Spec(\DM(\FF;\ZZ/\ell)^\omega)\to\Spec(\DAM(\FF;\ZZ/\ell)^\omega),
\end{equation}
all isotropic points get mapped to closed points.
(In fact, we give an explicit formula.)
Since it is known from general tt-principles that~\eqref{eq:DAM-vs-DM} is surjective, this shows that isotropic points are (very) far from exhausting $\Spec(\DM(\FF;\ZZ/\ell)^\omega)$.
For the vast majority of points in the target of~\eqref{eq:DAM-vs-DM}, we do not know any element in the fiber.
The main take-away is that despite the recent breakthroughs we seem to have barely scratched the surface of motivic tt-geometry (\Cref{rmk:armer-tor}).

At least for~$\ell=2$, \cite{vishik2025balmerspectrumvoevodskymotives} shows that the isotropic points themselves are closed so the fact that their images in~$\Spec(\DAM(\FF;\ZZ/2)^\omega)$ are as well may not be surprising.
(Although this doesn't seem to be a formal consequence.)
Our argument proceeds as follows: It is clear by construction that isotropic points are non-periodic hence, by the propagation of periodicity, they cannot map to periodic ones.
It then suffices to show that all non-closed points in~$\Spec(\DAM(\FF;\ZZ/\ell)^\omega)$ are periodic, and here we rely heavily on recent work of Miller~\cite{miller2025permutationtwistedcohomologyremixed} that in effect produces enough sections of line bundles on this space.

We do not claim that this is the simplest or most direct argument possible.
But it illustrates how periods can be effective in organizing tt-geometry in more complicated situations.
We imagine that their usefulness will only grow as explorations of the landscape in motivic tt-geometry lead us further afield.
And our result brings to the fore the challenge to exhibit periodic points of $\Spec(\DM(\FF;\ZZ/\ell)^\omega)$.
\subsection{Contents}
\label{sec:contents}

The first half of the article is straightforward: we simply lay the foundations to run the argument above as smoothly as possible.
This includes in \Cref{sec:periodicity:-rings,sec:periodicity:oo-rings,sec:periodicity} introducing periods for commutative ring spectra, stably symmetric monoidal $\infty$-categories, the corresponding period stratification on their associated spaces, and their basic properties.
We observe that these notions all depend on the underlying graded rings and tt-categories only, respectively, and we therefore work in this generality.
Determining periods in tt-geometry can be quite a bit harder than determining the characteristic of rings and so, in \Cref{sec:comparison,sec:transfer}, we establish some tools that aid in d\'evissage arguments.
One of the main ones is decategorification: comparing tt-categories to (graded) commutative rings arising as sections of families of line bundles.
Here we highlight the substantial contribution (in Appendix~\ref{sec:gener-comp-maps}) of Dell'Ambrogio (based on~\cite{MR3163513}) that provides a robust framework for talking about ample families of line bundles and divisorial tt-categories.
We hope this framework will be useful for tt-geometers in other contexts as well.

The second, more substantial, half of the article applies the theory, exhibiting the period stratification in many examples.
Some are easily pulled from the literature (\Cref{sec:examples}).
The ones that take more effort include notably the derived and stable categories in modular representation theory, as well as derived permutation modules (\Cref{sec:dperm}), where our results are complete only for certain classes of groups.
Through the Galois correspondence, these derived permutation modules (or, equivalently, modules for the Bredon cohomology spectrum) correspond to Artin motives.
We'll use this in \Cref{sec:motives,sec:isotropic-points} to explain our application to motives and isotropic points alluded to above.
At the same time we provide a bestiary of many of the known points in motivic tt-geometry together with their periods.
We hope this will be useful for anyone interested in exploring this area of research.

\subsection{Acknowledgments}
\label{sec:acknowledgments}

First and foremost I would like to thank Ivo Dell'Ambrogio for writing the appendix and for being so accommodating.
I am also thankful to Sam Miller for making~\cite{miller2025permutationtwistedcohomologyremixed} available at this point.
Dave Benson, Drew Heard and Greg Stevenson helpfully answered questions related to this work.
Finally, I am grateful to Paul Balmer, Yorick Fuhrmann, Beren Sanders and Greg Stevenson for insightful comments on a draft version.

\subsection{Conventions}
\label{sec:conventions}
We follow the conventions of~\cite{MR2522659,HA} for $\infty$-categories.
Let $\Catex$ denote the $\infty$-category of stable $\infty$-categories and exact functors (with its usual tensor product).
A \emph{stably symmetric monoidal $\infty$-category} is a commutative algebra object in~$\Catex$.
That is, it is a stable $\infty$-category with a symmetric monoidal structure such that the tensor product is exact in each variable.

For tt-categories we follow the conventions of~\cite{balmer:spectrum}.
In particular, a \emph{tt-category} is a triangulated category with a symmetric monoidal structure such that the tensor product is exact in each variable.

Idempotent completion is denoted by $(-)^\natural$.
Given $\cK\in\calg(\Catex)$ there is an associated structured space~$\Spec(\cK):=\Spec(\cK^{\natural})$ defined in~\cite{aoki2025higherzariskigeometry}.
Its underlying space~$\Spc(\cK)$ is canonically identified with the spectrum~$\Spc(\ho(\cK))$ of the underlying tt-category in the sense of~\cite{balmer:spectrum}.
We will (try to) consistently distinguish between spectra of various things as spaces (denoted $\Spc$) and with a `locally ringed space' structure (denoted $\Spec$).

\section{Periods of rings\ldots{}}
\label{sec:periodicity:-rings}

In this section we fix terminology concerning periodicity in ring spectra and graded rings.

\begin{notn}
\label{notn:graded-ring}
A \emph{graded ring} refers to a $\ZZ$-graded commutative ring in a general sense.
That is, it is a monoid $R=\oplus_{d\in\ZZ}R_d$ in $\ZZ$-graded abelian groups such that there exists $\epsilon\in R_0^\times$ satisfying $rs=\epsilon^{\deg(r)\deg(s)}sr$ for any homogeneous elements $r,s$.
\end{notn}

\begin{exa}
\label{exa:dirac-ring}
The most important case for us is when $\epsilon$ can be chosen to be~$-1$.
These are called \emph{Dirac rings} in~\cite{hesselholt-pstragowski:dirac1}.
This is, in other words, a commutative monoid in graded abelian groups, if one endows the latter with the symmetry constraint that respects the Koszul sign rule.
\end{exa}

\begin{defn}
Let $d>0$ be an integer.
A graded ring~$R$ is \emph{$d$-periodic} if there exists a unit $u\in R_d$ in degree~$d$.
We say $R$ is \emph{periodic} if it is $d$-periodic for some $d>0$.
The \emph{period} of~$R$, denoted $\per(R)$, is the smallest degree~$d>0$ of a unit in~$R$.
We also set $\per(R)=0$ and call $R$ \emph{non-periodic} if no unit of positive degree exists.
\end{defn}

\begin{rmk}
\label{rmk:E_n}
Let $R$ be a sufficiently commutative ring spectrum, by which we mean an $\EE_n$-algebra in spectra, for $2\leq n\leq \infty$.
Then $\pi_*R$ is a Dirac ring hence in particular a graded ring in the sense above.
We say that $R$ is $d$-periodic if $\pi_*R$ is.
Similarly, we will transfer notions like period and local period (to be introduced below) to such ring spectra.
Also, while in the sequel we focus almost exclusively on $\EE_\infty$-rings, much of it would apply more generally to sufficiently commutative ring spectra.
\end{rmk}

\begin{exa}
\label{exa:even-periodic}
An \emph{even periodic} $\EE_\infty$-ring is a $2$-periodic $\EE_\infty$-ring whose odd homotopy groups vanish.
These play an important role in stable homotopy theory due to their connection with complex orientations and formal group laws.
Examples include complex $K$-theory $KU$, Morava $E$-theory~$E(n)$ at height~$n$ (and an implicit prime~$p$), elliptic spectra (arising from \'etale classifying maps $\Spec(R)\to\mathcal{M}_{\mathrm{ell}}$ through Goerss--Hopkins--Miller).
\end{exa}

\begin{exa}
\label{exa:TMF}
(Compare \Cref{exa:TMF-periods}.) The $\EE_\infty$-ring of (`periodic') topological modular forms $\mathrm{TMF}=\Gamma(\mathcal{M}_{\mathrm{ell}},\cO_{\el})$ has period~$576$~(Hopkins, \cite{MR3590352}).
\end{exa}

\begin{exa}
A graded field~$k$ is a graded ring in which $0$ is the only graded prime ideal.
Then either $k=k_0$  or $k=k_0[t^{\pm 1}]$, where $k_0$ is a field in the ordinary sense and $t\in k_d$ for some $d>0$.
This is an easy exercise or see~\cite[Proposition~2.13]{hesselholt-pstragowski:dirac1} whose proof (in the Dirac case) goes through verbatim.
In the first case $\per(k)=0$, and in the second case $\per(k_0[t^{\pm 1}])=d$.
\end{exa}

The periods of graded rings are subject to the following basic constraint.
\begin{lem}
\label{sta:odd-period}
Let $R$ be a graded ring of odd period.
Then the commutativity constraint is $\epsilon=1$.
In particular, if $R$ is Dirac then $2=0$ in~$R$.
\end{lem}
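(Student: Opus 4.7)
The plan is to specialize the commutativity constraint $rs=\epsilon^{\deg(r)\deg(s)}sr$ to the pair $(u,u)$, where $u\in R_d^\times$ is the unit of odd degree $d=\per(R)$, and exploit that $u^2$ is itself a unit.

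From $u\cdot u=\epsilon^{d^2}\,u\cdot u$ in $R_{2d}$ together with the fact that $u^2\in R_{2d}^\times$ is cancelable (being the product of two units), I immediately extract
\[
\epsilon^{d^2}=1 \quad\text{in } R_0.
\]
Since $d$ is odd, so is $d^2$. Combining this with the identity $\epsilon^2=1$ — built into the notion of a commutativity constraint, as the symmetry $\sigma\colon r\otimes s\mapsto \epsilon^{\deg(r)\deg(s)}\,s\otimes r$ is required to be an involution — and writing $d^2=2k+1$, I compute
\[
\epsilon \;=\; \epsilon\cdot(\epsilon^2)^{k} \;=\; \epsilon^{2k+1} \;=\; \epsilon^{d^2} \;=\; 1.
\]
This proves the first claim.

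For the Dirac specialization, $\epsilon=-1$ by convention, so $\epsilon=1$ becomes $-1=1$ in $R_0\subseteq R$, i.e., $2=0$ in $R$ (equivalently, read off directly from $1=\epsilon^{d^2}=(-1)^{d^2}=-1$ since $d^2$ is odd). The essential — and really only — step is the cancellation using the invertibility of $u^2$; everything else is arithmetic with the known relations on $\epsilon$. The one mild subtlety is the use of $\epsilon^2=1$ in the general (non-Dirac) case, which is implicit in \Cref{notn:graded-ring} via the symmetric monoidal interpretation of the commutativity constraint.
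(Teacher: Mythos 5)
Your proof is correct and follows essentially the same route as the paper: apply graded commutativity to the pair $(u,u)$ and cancel the unit $u^2$. You are in fact slightly more careful than the paper's one-line argument, which writes $u^2=\epsilon u^2$ directly — tacitly using exactly the reduction $\epsilon^{d^2}=\epsilon$ via the involutivity $\epsilon^2=1$ that you make explicit.
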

\begin{proof}
Let $u\in R$ be a unit of odd degree.
By the graded commutativity, $u^2=\epsilon u^2$ and since $u$ is a unit we must have $1=\epsilon$ in $R$.
In the Dirac case $\epsilon=-1$ so that $2=0$.
\end{proof}

\begin{rmk}
\label{rmk:spech}
A graded ring~$R$ has an associated geometry, its \emph{graded spectrum} (or \emph{homogeneous spectrum}) of graded prime ideals~$\Spech(R)=(\Spc(R),\cO_R)$.
That is, the underlying topological space~$\Spc(R)$ has graded prime ideals as points, with the topology generated by the opens $D(f)=\{\gp\mid f\notin\gp\,\}$ for $f\in R$ homogeneous.
The structure sheaf of graded rings is essentially determined by sending~$D(f)$ to $R[\frac{1}{f}]$.

Now, assume~$R$ is periodic.
Then the association
\[
\Spch(R)\to \Spc(R_0),\qquad \gp\mapsto \gp\cap R_0,
\]
is easily seen to be a homeomorphism.
Under this identification, the structure sheaf on $\Spec(R_0)$ is (isomorphic to) the degree-0 part of the structure sheaf on~$\Spech(R)$.
\end{rmk}

\begin{defn}
\label{defn:ring-local-period}
\begin{enumerate}
\item Let $R$ be a graded ring and $\gp\in\Spech(R)$.
We define the \emph{(local) period} of~$R$ at~$\gp$ to be
\[
\per_R(\gp):=\per(R_{\gp}),
\]
the period of the local ring at~$\gp$.
\item 
For $d>0$ we also define the \emph{$d$-periodic locus} to be the following subset of $\Spch(R)$:
\begin{align*}
  \Per_d(R):=\ &\{\gp\in\Spch(R)\mid \per_R(\gp)\text{ divides } d\}\\
  =\ &\{\gp\in\Spch(R)\mid R_\gp\text{ is $d$-periodic}\}
\end{align*}
\item Finally, $\Per(R):=\cup_{d>0}\Per_d(R)$ is the \emph{periodic locus}.
\end{enumerate}
\end{defn}

\begin{rmk}
\begin{enumerate}
\item The subsets $\Per_d(R)$ (and therefore also $\Per(R)$) are open.
This is an easy exercise, or see the proof of \Cref{sta:d-periodic-spreadingout}.
\item Locally graded ringed spaces that are locally isomorphic to spectra of graded rings might be called \emph{graded schemes}.
They generalize Dirac schemes in~\cite{hesselholt-pstragowski:dirac1} (built locally out of graded spectra of Dirac rings).
It is clear that the notions in \Cref{defn:ring-local-period}, being defined locally, extend to graded schemes.
\end{enumerate}
\end{rmk}

\begin{exa}
Let $k$ be a field and consider the graded ring $R=k[x,y]$ with $\deg(x)=1$, $\deg(y)=2$.
The periodic locus $\Per(R)$ is a~$\PP_k^1$, and the complement is a single closed point corresponding to the ideal $\langle x,y\rangle$.
We have $\per_R(\langle x\rangle)=2$ while all other points in the periodic locus have period~$1$.
\end{exa}

\begin{exa}
Let $d>0$.
A \emph{weakly $d$-periodic} $\EE_\infty$-ring is an $\EE_\infty$-ring~$R$ such that the multiplication map
\[
R_n\otimes_{R_0}R_d\to R_{d+n}
\]
is an isomorphism for all $n\in\ZZ$.
(As usual, we abbreviate $R_n:=\pi_nR$.)
Such ring spectra come up naturally in stable homotopy theory, for essentially the same reason that even periodic ones do.
Of course, a $d$-periodic $\EE_\infty$-ring is also weakly $d$-periodic but the converse is not true.
(For example, because not every one-dimensional formal group over a commutative ring admits a global coordinate.)

We claim that
\[
\Per(R)=\Per_d(R)=\Spech(\pi_*R).
\]
Indeed, the condition implies that $R_d$ is an invertible $R_0$-module with inverse~$R_{-d}$.
Hence locally on~$\Spec(R_0)$, say after inverting some $f\in R_0$, it is a free module of rank~$1$ and any generator will be a unit for $R[\frac{1}{f}]$ of degree~$d$.
\end{exa}

We finish with some observations about actually computing the local periods of a graded ring.
This will be used in \Cref{sec:examples}.
\begin{prop}
\label{sta:graded-ring-local-period}
Let $R$ be a graded ring and $\gp\in\Spech(R)$.
Then
\[
\per_\gp(R)=\gcd\left(\deg(f)\ \bigm\vert\ f(\gp)\neq 0, \deg(f)\neq 0\right).
\]
(Here, we set $\gcd(\emptyset)=0$.)
\end{prop}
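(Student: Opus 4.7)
The plan is to identify the set of degrees of homogeneous units in $R_{\gp}$ with a subgroup of $\ZZ$ and then read off its smallest positive generator. Let $S = \{f \in R \text{ homogeneous} : f \notin \gp\}$ be the multiplicative system used to form the graded localization $R_\gp$, and set $D = \{\deg(f) : f \in S\} \subseteq \ZZ$, so that the right-hand side of the formula is $\gcd(D \setminus \{0\})$. Since $\gp$ is a graded prime, $S$ is multiplicatively closed, hence $D$ is a submonoid of $(\ZZ,+)$; and $1 \in S$ gives $0 \in D$.

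The key step is to check that $H := D - D$, the subgroup of $\ZZ$ generated by $D$, coincides with the set of degrees of homogeneous units in $R_\gp$. For one inclusion: given $d_1, d_2 \in D$ with representatives $f_i \in S$ of degree $d_i$, the element $f_1/f_2 \in R_\gp$ is homogeneous of degree $d_1 - d_2$ and invertible, with inverse $f_2/f_1$. For the reverse: every homogeneous element of $R_\gp$ can be written $f/g$ with $f, g$ homogeneous and $g \in S$, and using the definition of the localization together with the primality of~$\gp$, one checks directly that $f/g$ is a unit iff $f \in S$ as well, in which case its degree $\deg(f) - \deg(g)$ lies in $D - D$.

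With this identification in hand, a standard fact about subgroups of $\ZZ$ yields $H = n \ZZ$ where $n = \gcd(D \setminus \{0\})$ (with the convention $\gcd(\emptyset) = 0$). Since by definition $\per(R_\gp)$ is the smallest positive degree of a homogeneous unit in $R_\gp$ (or $0$ if none exists), we conclude $\per_\gp(R) = n = \gcd(D \setminus \{0\})$, as desired. The only mild subtlety is the characterization of homogeneous units in the graded localization; equivalently, one could phrase this as $R_\gp$ being graded local with unique graded maximal ideal $\gp R_\gp$, which holds in the graded-commutative setting with essentially the same argument as in the strictly commutative case.
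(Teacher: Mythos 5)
Your argument is correct, and it is exactly the routine verification the paper has in mind (its proof is simply ``Straightforward''): the degrees of homogeneous units of $R_\gp=S^{-1}R$ form the subgroup of $\ZZ$ generated by $\{\deg(f)\mid f\in S\}$, whose smallest positive element is the stated gcd. The one point worth having spelled out — that a fraction $f/g$ is a unit only if $f\notin\gp$, using primality and the fact that homogeneous elements commute up to the unit $\epsilon$ — you address, so there is no gap.
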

\begin{proof}
Straightforward.
\end{proof}

\begin{cor}
\label{sta:graded-ring-periodic-locus}
Let $R$ be a graded ring.
Then we have
\begin{equation}
\label{eq:graded-ring-periodic-locus}
\Per(R)=\bigcup_{\deg(f)\neq 0}D(f)
\end{equation}
and it's enough to let $f$ run through generators of $R/\sqrt{0}$ as an $R_0/\sqrt{0}$-algebra.
Moreover, for each homogeneous $f\in R$ with $\deg(f)\neq 0$, we have $D(f)\subseteq\Per_{\deg(f)}(R)$.\qed
\end{cor}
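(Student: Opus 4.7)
The plan is to derive everything from Proposition \ref{sta:graded-ring-local-period}, since that formula reduces the entire question about local periods to tracking homogeneous elements of nonzero degree that avoid a given prime.

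First I would establish the equality \eqref{eq:graded-ring-periodic-locus}. By \Cref{sta:graded-ring-local-period}, a point $\gp\in\Spch(R)$ lies in $\Per(R)$ precisely when the gcd $\gcd(\deg(f)\mid f\notin\gp,\ \deg(f)\neq 0)$ is a positive integer, which happens if and only if the index set is nonempty, \ie{} there exists a homogeneous $f\in R$ with $\deg(f)\neq 0$ and $f\notin\gp$. Unraveling the definition of $D(f)$, this is exactly membership in $\bigcup_{\deg(f)\neq 0}D(f)$. The same proposition also immediately gives the final assertion: if $\gp\in D(f)$ with $\deg(f)\neq 0$, then $\deg(f)$ appears in the gcd defining $\per_\gp(R)$, so $\per_\gp(R)\mid\deg(f)$ and $\gp\in\Per_{\deg(f)}(R)$.

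For the refinement to generators of $R/\sqrt{0}$ as an $R_0/\sqrt{0}$-algebra, the containment $\supseteq$ is trivial. For $\subseteq$, I would fix $\gp\in\Per(R)$ and use the previous step to obtain a homogeneous $f$ with $\deg(f)\neq 0$ and $f\notin\gp$. Since $\gp$ is prime, it contains $\sqrt{0}$, so $D(f)$ only depends on the image $\bar f\in R/\sqrt{0}$. Writing $\bar f$ as a polynomial in the chosen homogeneous generators $g_j$ with coefficients in $R_0/\sqrt{0}$, and extracting the homogeneous component of degree $\deg(f)$, one obtains a sum $\sum_i c_i\, g_{j_1^{(i)}}\cdots g_{j_{k_i}^{(i)}}$ (modulo $\sqrt{0}$) that is not in $\gp$. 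Primality of $\gp$ then forces some monomial $c_i\,g_{j_1^{(i)}}\cdots g_{j_{k_i}^{(i)}}$, and hence each of its factors, to lie outside $\gp$.

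The only mildly delicate point is making sure the generator picked out at the end genuinely has nonzero degree; this is where I would be most careful. Since $c_i\in R_0$ and the monomial has total degree $\deg(f)\neq 0$, the degrees of its generator factors must sum to a nonzero integer, so at least one $g_{j_l^{(i)}}$ has nonzero degree. This generator lies outside $\gp$, proving $\gp\in D(g_{j_l^{(i)}})$ for some generator of nonzero degree, which closes the argument. \qed
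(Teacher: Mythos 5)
Your proof is correct and follows the route the paper intends: the corollary is stated as an immediate consequence of \Cref{sta:graded-ring-local-period}, and you derive all three assertions from that gcd formula (plus the standard observation that $\sqrt{0}\subseteq\gp$ lets one pass to homogeneous generators of $R/\sqrt{0}$ and use primality to isolate a nonzero-degree generator outside~$\gp$). Nothing essentially different from the paper, and no gaps.
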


\section{\ldots{}and categories}
\label{sec:periodicity:oo-rings}

Let $\cK$ be a stably symmetric monoidal $\infty$-category (\Cref{sec:conventions}).
Being a commutative algebra object in $\Catex$, we may see these as a categorification of the notion of an $\EE_\infty$-ring (a commutative algebra object in~$\Sp$).
In this section and the next we categorify the notions of the previous section and make them applicable in this context.

Just as the notions in \Cref{sec:periodicity:-rings} only depended on the underlying graded ring of homotopy classes, so the notions to be introduced now only depend on the underlying homotopy category which has the structure of a tensor triangulated (or tt-)category (\Cref{sec:conventions}).

\begin{exa}
\label{exa:oo-ho-tt}
The homotopy category of any stably symmetric monoidal $\infty$-category is a tt-category.
(It is enough for it to be $\EE_3$-monoidal and for many purposes even $\EE_2$ would suffice.)
Any exact symmetric monoidal functor between such induces a tt-functor on the homotopy categories.
\end{exa}

\begin{exa}
Let $R$ be a commutative local ring with maximal non-zero ideal generated by~$2$ such that $4=0$.
(For example $R=\ZZ/4$.)
The category $\fmod^{\free}(R)$ of finitely generated free $R$-modules has an exotic triangulated structure: it is not the homotopy category of a stable $\infty$-category~\cite{MR2342636}.
One checks easily that the usual tensor product of free $R$-modules is exact so that $(\fmod^{\free}(R),\otimes_R)$ is a tt-category.
\end{exa}

\begin{exa}
Let $k\neq 0$ be a semisimple ring.
The category $\fmod_k$ of finitely generated $k$-modules admits a unique triangulated structure where the suspension is the identity functor.
The triangles are those forced by the axioms; equivalently, those that are exact in each spot.
If $k$ is also commutative (hence a product of fields) then $(\fmod_k,\otimes_k)$ is a tt-category. 

In fact, the triangulated category~$\fmod_k$ is the stable category of the Frobenius algebra $k[\epsilon]/\langle\epsilon^2\rangle$ and hence is the homotopy category of a stable $\infty$-category.\footnote{I thank Greg Stevenson for pointing this out.}
However, at least if $\car(k)\neq 2$, the tt-category $(\fmod_k,\otimes_k)$ itself is exotic: it is not the homotopy category of a stably symmetric monoidal $\infty$-category.
For example because the graded endomorphism ring of the unit is not Dirac (see \Cref{rmk:E_n,rmk:period-rings}).
\end{exa}

\begin{defn}
\label{defn:period}
Let $\cK$ be a  tt-category.
\begin{enumerate}
\item Let $d>0$ be an integer.
Then~$\cK$ is called \emph{$d$-periodic} if $\unit\cong\Sigma^d\unit$ in~$\cK$.
\item
The smallest $d>0$ such that $\cK$ is $d$-periodic is called its \emph{period} and is denoted by~$\per(\cK)$.
We also set $\per(\cK)=0$ to mean that $\cK$ is not $d$-periodic for any~$d>0$.
\item
We say that $\cK$ is \emph{periodic} if $\per(\cK)>0$, and \emph{non-periodic} otherwise.
\end{enumerate}
\end{defn}

\begin{rmk}
Let $\cK$ be a stably symmetric monoidal $\infty$-category.
We say that $\cK$ is $d$-periodic (etc.) if its homotopy category is.
The statements and arguments in this section translate immediately to this setting.
\end{rmk}

\begin{rmk}
\label{rmk:period-char}
As pointed out in the introduction, the period is the generator of the kernel of the monoid homomorphism to the Picard group
\begin{equation}
\label{eq:Z-Pic}
(\ZZ_{\geq 0},+)\to(\mathrm{Pic}(\cK),\otimes)
\end{equation}
that sends $1$ to $\Sigma\unit$, and is thereby similar in spirit to the characteristic of a ring.

\end{rmk}

\begin{rmk}
\label{rmk:period-rings}
For us, the most useful way of thinking about $d$-periodicity and periods is to directly relate them to the concepts in \Cref{sec:periodicity:-rings}. Consider the graded endomorphism ring
\[
R_{\cK}=\hom_{\cK}(\Sigma^\bullet\unit,\unit).
\]
This is a graded ring in the sense of \Cref{notn:graded-ring}, where the commutativity constraint is given by the central switch of~$\Sigma\unit$, that is, the element~$\epsilon\colon \unit\to\unit$ which corresponds to the switch of factors on~$\Sigma\unit\otimes\Sigma\unit$.

Now, a map $a\colon\Sigma^d\unit\to\unit$ in $\cK$ is an isomorphism iff the corresponding element in $(R_{\cK})_d$ is a unit.
It follows that $\per(\cK)=\per(R_\cK)$.

\end{rmk}

\begin{rmk}
\label{rmk:unitation}
Let $\cK_{\langle\unit\rangle}$ be the (small) unitation of~$\cK$ as studied in~\cite{sanders2025tensortriangulargeometryfully}.
This is the thick (automatically $\otimes$-)subcategory of~$\cK$ generated by the unit~$\unit$.
By definition, $\cK$ is $d$-periodic if and only if $\cK_{\langle\unit\rangle}$ is.

It follows that for a fully faithful embedding $\cK\hookrightarrow\cL$ one has $\per(\cK)=\per(\cL)$.
In particular, the idempotent completion $\cK\to\cK^{\natural}$ does not change the period.
\end{rmk}

The following is obvious.
\begin{lem}
\label{sta:d-periodic-basics}
Let $F\colon \cK\to\cL$ be a tt-functor.
\begin{enumerate}
\item If $d\bigm\vert d'$ and $\cK$ is $d$-periodic then $\cK$ is $d'$-periodic.
\item If $\cK$ is $d$-periodic then so is~$\cL$.
\item We have $\per(\cL)\bigm\vert\per(\cK)$.\qed{}
\end{enumerate}
\end{lem}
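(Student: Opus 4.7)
The plan is to reduce all three statements to the observation in \Cref{rmk:period-char} (and equivalently \Cref{rmk:period-rings}) that the set
\[
I(\cK)=\{d\in\ZZ_{\geq 0}\mid \Sigma^d\unit\cong\unit\text{ in }\cK\}
\]
is a submonoid of~$(\ZZ_{\geq 0},+)$, and indeed coincides with $\per(\cK)\ZZ\cap\ZZ_{\geq 0}$: closure under addition follows because $\Sigma$ is invertible so $\Sigma^{d_1+d_2}\unit\cong\Sigma^{d_2}(\Sigma^{d_1}\unit)\cong\Sigma^{d_2}\unit\cong\unit$, and one can likewise take differences, so the image in~$\ZZ$ is a subgroup.

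For~(1), if $d\in I(\cK)$ and $d'=kd$ with $k\geq 1$, then $d'\in I(\cK)$ by closure under addition; equivalently, iterate the given isomorphism $\Sigma^d\unit\cong\unit$.

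For~(2), since $F$ is tt it preserves the unit and commutes with suspension (up to natural isomorphism), hence carries any isomorphism $\Sigma^d\unit_\cK\cong\unit_\cK$ to an isomorphism $\Sigma^d\unit_\cL\cong\unit_\cL$. In particular~$F$ induces an inclusion $I(\cK)\subseteq I(\cL)$, which is of course also visible from the induced graded ring map $R_\cK\to R_\cL$ of \Cref{rmk:period-rings}.

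For~(3), take $d=\per(\cK)$, which lies in $I(\cK)\subseteq I(\cL)$ by~(2); since $I(\cL)=\per(\cL)\ZZ_{\geq 0}$, this forces $\per(\cL)\mid\per(\cK)$, with the convention that $0\mid 0$ covering the non-periodic case. The only even mildly non-trivial point is the monoid/group identification of~$I(\cK)$ with multiples of~$\per(\cK)$, and this is handled once and for all by the $\Sigma^{-1}$-trick above.
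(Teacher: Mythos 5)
Your argument is correct and is precisely the routine verification the paper omits (the lemma is stated with ``The following is obvious'' and no proof): tt-functors preserve the unit and commute with suspension, and the set of $d$ with $\Sigma^d\unit\cong\unit$ is the non-negative part of a subgroup of $\ZZ$, which yields all three parts. No gaps; the only cosmetic point is that in the case $\per(\cK)=0$ the divisibility $\per(\cL)\mid 0$ holds for \emph{any} value of $\per(\cL)$, not only via ``$0\mid 0$''.
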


We also note the following restriction on possible periods.
\begin{lem}
\label{sta:period-characteristic}
Assume $\cK$ has odd period.
Then the central switch $\epsilon$ for~$\Sigma\unit$ is~$1$.
In particular, if~$\cK$ has a model then $2=0$ in~$\cK$.
\end{lem}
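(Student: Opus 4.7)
The plan is to reduce both assertions to the graded-ring version, \Cref{sta:odd-period}, via the dictionary set up in \Cref{rmk:period-rings}. The translation is designed precisely so that statements about periods and symmetry constraints for $\cK$ correspond one-to-one to statements about periods and commutativity constraints for the graded endomorphism ring
\[
R_\cK = \hom_\cK(\Sigma^\bullet\unit,\unit).
\]

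Concretely, I would first recall that $R_\cK$ is a graded ring in the sense of \Cref{notn:graded-ring} whose commutativity constraint is by construction the central switch $\epsilon\colon\unit\to\unit$ of $\Sigma\unit$, and that $\per(\cK)=\per(R_\cK)$. Hence if $\cK$ has odd period, so does $R_\cK$, and applying \Cref{sta:odd-period} to $R_\cK$ directly yields $\epsilon=1$, proving the first claim.

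For the second claim, I would observe that whenever $\cK$ admits a model, i.e., is equivalent to the homotopy category of some stably symmetric monoidal $\infty$-category $\tilde\cK$, the ring $R_\cK$ is automatically Dirac. Indeed, the endomorphism spectrum $\End_{\tilde\cK}(\unit)$ is then an $\EE_\infty$-ring whose graded ring of homotopy groups is canonically $R_\cK$, and this is Dirac by \Cref{rmk:E_n}. Invoking the Dirac case of \Cref{sta:odd-period} then gives $2=0$ in $R_\cK$, that is, $2\cdot\id_\unit = 0$ in $\cK$.

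There is no serious obstacle: once the dictionary of \Cref{rmk:period-rings} is in hand, the proof is essentially a transcription of \Cref{sta:odd-period}. The only point worth checking carefully is that "having a model" genuinely forces $R_\cK$ to be Dirac, but this is a standard consequence of the $\EE_\infty$-structure on $\End_{\tilde\cK}(\unit)$.
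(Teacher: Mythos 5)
Your proposal is correct and follows exactly the paper's argument: reduce to the graded ring $R_\cK$ via \Cref{rmk:period-rings} and apply \Cref{sta:odd-period}, with the Dirac property of $R_\cK$ in the presence of a model supplied by \Cref{rmk:E_n}. The extra care you take in justifying why a model forces $R_\cK$ to be Dirac is exactly the content the paper leaves implicit.
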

\begin{proof}
It is equivalent to prove this for the graded ring~$R_{\cK}$ (\Cref{rmk:period-rings}) and this is \Cref{sta:odd-period}.
\end{proof}

\begin{lem}
\label{sta:periodic-suspension}
Let $\cK$ be $d$-periodic.
Then there is a natural isomorphism between the two functors $\Sigma^d,\id\colon\cK\to\cK$.
\end{lem}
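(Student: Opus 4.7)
The strategy is to upgrade the pointwise isomorphism $\unit\cong\Sigma^d\unit$ to a natural one by tensoring with $X$ and exploiting the fact that, in any tt-category, the suspension is (up to natural isomorphism) given by tensoring with the invertible object~$\Sigma\unit$.

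First I would recall the standard fact that in a tt-category there is a natural isomorphism
\[
\sigma_X\colon \Sigma\unit\otimes X\xrightarrow{\sim}\Sigma X,
\]
coming from the compatibility of the triangulated structure with the monoidal structure (exactness of $-\otimes X$ together with the unit isomorphism $\unit\otimes X\cong X$). Iterating this $d$ times and composing with the canonical identification $(\Sigma\unit)^{\otimes d}\cong\Sigma^d\unit$, one obtains a natural isomorphism
\[
\sigma_X^{(d)}\colon \Sigma^d\unit\otimes X\xrightarrow{\sim} \Sigma^d X.
\]

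Next, by hypothesis there exists an isomorphism $u\colon\Sigma^d\unit\xrightarrow{\sim}\unit$ in~$\cK$ (\Cref{rmk:period-rings}). For each object $X\in\cK$ I define
\[
\eta_X\colon \Sigma^d X\xrightarrow{(\sigma_X^{(d)})^{-1}} \Sigma^d\unit\otimes X\xrightarrow{u\otimes X} \unit\otimes X\xrightarrow{\sim} X,
\]
which is an isomorphism as a composite of isomorphisms. Naturality in~$X$ is automatic: $\sigma_X^{(d)}$ is natural, $u\otimes -$ is a natural transformation between the two functors $\Sigma^d\unit\otimes(-)$ and $\unit\otimes(-)$, and the final unitor is natural as well. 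Hence $\eta\colon\Sigma^d\Rightarrow\id$ is the desired natural isomorphism of functors.

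The only subtle point is the first step, namely producing the natural isomorphism $\sigma_X$; once this is in hand the rest of the argument is formal. In a stably symmetric monoidal $\infty$-category this is immediate from the fact that $\Sigma$ is the shift on the underlying stable $\infty$-category, which is canonically tensoring with $\Sigma\unit$, and the claim then descends to the homotopy tt-category. For a general tt-category (not necessarily arising from an $\infty$-categorical model) the same identification is part of the axiomatic compatibility of triangulation and tensor product, and I would simply cite this standard piece of tt-folklore.
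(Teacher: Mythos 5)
Your proof is correct and is essentially the paper's own argument, which is the one-line chain of natural isomorphisms $\Sigma^d(-)\cong\Sigma^d\unit\otimes(-)\cong\unit\otimes(-)\cong\id$. You have simply spelled out the same steps in more detail, including the standard identification $\Sigma X\cong\Sigma\unit\otimes X$ that the paper takes for granted.
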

\begin{proof}
We have natural isomorphisms of functors
\[
\Sigma^d(-)\cong\Sigma^d\unit\otimes(-)\cong\unit\otimes(-)\cong\id.\qedhere{}
\]
\end{proof}

\begin{rmk}
Recall that a triangulated category~$\cK$ is called $d$-periodic if the two endofunctors $\Sigma^d,\id$ are naturally isomorphic.
So, \Cref{sta:periodic-suspension} says that the triangulated category underlying a $d$-periodic tt-category is $d$-periodic.
In particular, every object $x\in\cK$ satisfies $\Sigma^dx\cong x$.
This immediately implies the following:
\end{rmk}

\begin{prop}
\label{Prop:wt-structures-periodic}
Let $\cK\neq 0$ be a periodic triangulated category.
Then $\cK$ does not afford:
\begin{enumerate}
\item a weight structure;
\item a non-degenerate $t$-structure.\qed
\end{enumerate}
\end{prop}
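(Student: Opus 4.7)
The plan is to feed the isomorphism $\Sigma^d \cong \id_\cK$ from \Cref{sta:periodic-suspension}, with $d := \per(\cK) > 0$, into the shift-compatibility axioms of each putative structure on $\cK$, and then show that together with the remaining axioms the structure collapses $\cK$ to zero. The key preliminary observation, used in both parts, is that any full subcategory closed under $\Sigma$ is automatically closed under $\Sigma^{-1} = \Sigma^{d-1}$, and vice versa.

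For~(2), let $(\cK_{\leq 0}, \cK_{\geq 0})$ be a non-degenerate $t$-structure. Each aisle is stable under one of $\Sigma^{\pm 1}$ by the $t$-structure axioms, and hence under both by the preliminary observation. Consequently $\cK_{\leq n} = \cK_{\leq 0}$ and $\cK_{\geq n} = \cK_{\geq 0}$ for every $n \in \ZZ$. Non-degeneracy $\bigcap_n \cK_{\leq -n} = 0 = \bigcap_n \cK_{\geq n}$ then forces both aisles to vanish, and the truncation triangle $\tau_{\geq 1}X \to X \to \tau_{\leq 0}X$ of any object places $X$ between two zero objects; hence $\cK = 0$, contradicting the hypothesis.

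For~(1), let $(\cK_{w \leq 0}, \cK_{w \geq 0})$ be a weight structure. The same shift argument yields $\cK_{w \leq n} = \cK_{w \leq 0}$ and $\cK_{w \geq n} = \cK_{w \geq 0}$ for every $n$; in particular $\cK_{w \geq 1} = \cK_{w \geq 0}$. The orthogonality axiom $\hom(\cK_{w \leq 0}, \cK_{w \geq 1}) = 0$ then becomes $\hom(\cK_{w \leq 0}, \cK_{w \geq 0}) = 0$, so every object $X$ in the heart $\cK_{w \leq 0} \cap \cK_{w \geq 0}$ has $\id_X = 0$ and is therefore zero. Under the standing exhaustivity convention on weight structures---that the heart generates $\cK$ via iterated extensions and shifts---the vanishing of the heart forces $\cK = 0$, again contradicting the hypothesis.

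The main obstacle is the last step in~(1): passing from \emph{the heart is zero} to \emph{$\cK$ is zero}. This is immediate for weight structures which are bounded or exhaustive, but would fail for the trivial assignment $(\cK_{w \leq 0}, \cK_{w \geq 0}) = (\cK, 0)$, which is always a weight structure in Bondarko's original sense. The periodicity argument per se delivers only the vanishing of the heart; everything beyond is a matter of the non-triviality convention built into the paper's notion of weight structure.
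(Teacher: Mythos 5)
Your argument is correct and is exactly the intended one: the paper offers no proof beyond the preceding remark that $\Sigma^d x\cong x$ for all $x$, and the content is precisely your observation that both halves of either structure become stable under $\Sigma^{\pm1}$, after which non-degeneracy kills the two aisles of a $t$-structure outright, while orthogonality kills the heart of a weight structure. Your caveat in part~(1) is well taken: the degenerate pairs $(\cK,0)$ and $(0,\cK)$ do satisfy Bondarko's unrestricted axioms, so the statement should be read as tacitly excluding degenerate weight structures (just as item~(2) explicitly does for $t$-structures); since every weight structure invoked later in the paper (on isotropic or Chow-type motivic categories, and on $\Perf_{\Hm\QQ}$) is bounded, hence generated by its heart, nothing downstream is affected.
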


\begin{exa}
\label{exa:ring-spectrum}
Let $R$ be a periodic $\EE_\infty$-ring. (That is, $\per(\pi_*R)>0$.)
Then the $\infty$-category of perfect $R$-modules $\Perf_R$ is periodic as well.
More precisely, $\per(\Perf_R)=\per(\pi_*R)$, cf.\,\Cref{rmk:period-rings}.

\end{exa}

\begin{exa}
\label{exa:periodic-derived-category}
Let $R$ be a commutative (discrete) ring and consider the $\EE_\infty$-ring $R[t_d^{\pm 1}]$ with $t_d$ in even degree~$d>0$.
(One way to construct $R[t_d^{\pm 1}]$ is as a cdga.
Another is to construct the commutative animated ring~$R[t_d]$ and invert~$t_d$.)
Then $\Mod(R[t_d^{\pm 1}])$ is $d$-periodic.
It can be identified with the \emph{$d$-periodic derived category} of~$R$~\cite{MR910167,MR1482975}.
For $d=2$ it (or rather its perfect modules) forms the natural enriching category for matrix factorizations, see~\cite[\S\,2.2]{MR3877165}.
\end{exa}

\begin{exa}
\label{exa:orbit-category}
Let $R$ again be a commutative (discrete) ring and let $\cK$ be a (stable) $R$-linear presentably symmetric monoidal $\infty$-category.
Then the pushout $\cK\otimes_{\Mod_R}\Mod_{R[t_d^{\pm 1}]}$ is $d$-periodic.
It is one version of the \emph{orbit category} of~$\cK$.
\end{exa}

\section{Period stratification}
\label{sec:periodicity}

In \Cref{defn:ring-local-period} we used the spectrum of a graded ring to define local periods and the periodic locus.
Similarly, a tt-category~$\cK$ has an associated spectrum $\Spc(\cK)$~\cite{balmer:spectrum} that will allow us in this section to categorify these notions as well.
Recall that its points are prime $\otimes$-ideals of~$\cK$ and the topology is generated by the distinguished opens $U(x)=\{\cP\mid x\in\cP\}$ for $x\in\cK$.

Discussing the geometry of categorified rings (that is, tt-geometry) is more natural in the context of stably symmetric monoidal $\infty$-categories and we elaborate on this in \Cref{rmk:tt-geometry}.

\begin{notn}
\label{notn:small}
All tt-categories in this section will be assumed essentially small.
\end{notn}

\begin{defn}
\label{defn:periodic}
Let $\cK$ be an (essentially small) tt-category.
\begin{enumerate}
\item
Let $d>0$ be an integer.
A point $\cP\in\Spc(\cK)$ is called \emph{$d$-periodic} \resp{\emph{periodic}} if the associated local category $\cK/\cP$ is.
And its period is $\per_{\cK}(\cP):=\per(\cK/\cP)$.
The \emph{$d$-periodic locus} $\Per_d(\cK)\subseteq\Spc(\cK)$ is the subset of $d$-periodic points.
\item
The \emph{periodic locus} is the subset $\Per(\cK):=\cup_{d>0}\Per_d(\cK)\subseteq\Spc(\cK)$.
The tt-category $\cK$ is called \emph{locally periodic} if $\Spc(\cK)=\Per(\cK)$.
\end{enumerate}
\end{defn}

\begin{rmk}
\label{rmk:local-condition}
A locally periodic tt-category~$\cK$ need not be $d$-periodic for any~$d$.
Examples include most stable module categories, see \Cref{rmk:stmod-not-periodic}.
And even if $\cK$ is periodic and all points of $\Spc(\cK)$ share the same period~$d>0$, it does not follow that $\per(\cK)=d$.
An example is perfect modules over~$\mathrm{TMF}$, see \Cref{exa:TMF-periods}.

Nevertheless, it is true that having period at most $d$ is an open condition.
We now state this for future reference.
\end{rmk}

\begin{notn}
\label{notn:localization}
Let $\cK$ be a tt-category and let $Z\subseteq\Spc(\cK)$ be a Thomason subset (for example the complement of a quasi-compact open; in general an arbitrary union of such).
Recall that this corresponds to a tt-ideal $\cI_Z\subseteq\cK$: the subcategory of objects with support contained in~$Z$.
We denote by $\cK|_{Z^c}:=\cK/\cI_Z$ the localization, the \emph{category $\cK$ on~$Z^c$}.
\end{notn}

\begin{lem}
\label{sta:d-periodic-spreadingout}
Let $\cK$ be a tt-category and let $\cP\in\Spc(\cK)$ be a $d$-periodic point, some $d>0$.
Then there exists a quasi-compact open neighborhood $\cP\in U\subseteq\Spc(\cK)$ such that $\cK|_U$ is $d$-periodic.
\end{lem}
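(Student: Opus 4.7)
The plan is to unpack the definition of $d$-periodicity at a point, lift the witnessing isomorphism to a roof in $\cK$ via the calculus of fractions, and then spread out the cones of the roof using their supports.

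First I would unfold what $\cP \in \Per_d(\cK)$ means: the local tt-category $\cK/\cP$ is $d$-periodic, so there exists an isomorphism $\alpha\colon \Sigma^d\unit \xrightarrow{\sim}\unit$ in $\cK/\cP$. Since $\cK/\cP$ is the Verdier quotient of $\cK$ by the thick $\otimes$-ideal $\cP$, the morphism $\alpha$ is represented by a roof
\[
\Sigma^d\unit \xleftarrow{s} y \xrightarrow{t}\unit
\]
in $\cK$ with $\cone(s)\in\cP$. As $\alpha$ is invertible in $\cK/\cP$ and $s$ becomes invertible there (by the cone condition), the same holds for $t$; hence $\cone(t)\in\cP$ as well. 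So the key point is that we have \emph{two} specific objects of $\cK$, namely $c_s:=\cone(s)$ and $c_t:=\cone(t)$, both avoiding~$\cP$ in the sense that they lie in~$\cP$.

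Next I would use these two objects to build the desired open. Set
\[
U := U(c_s)\cap U(c_t) \subseteq \Spc(\cK).
\]
Each $U(c)$ is a basic open and hence quasi-compact, and a finite intersection of quasi-compact opens in a spectral space is quasi-compact, so $U$ is a quasi-compact open neighborhood of $\cP$. Let $Z:=\Spc(\cK)\setminus U$; this is Thomason, and I would check that $\supp(c_s),\supp(c_t)\subseteq Z$ (since $U\cap\supp(c_s)=\emptyset$ by construction, and similarly for $c_t$), so that $c_s,c_t\in\cI_Z$.

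Finally, passing to $\cK|_U=\cK/\cI_Z$ (\Cref{notn:localization}) kills both $c_s$ and $c_t$. Consequently both $s$ and $t$ become isomorphisms in $\cK|_U$, and the roof yields $\Sigma^d\unit \cong y \cong \unit$ in $\cK|_U$, proving that $\cK|_U$ is $d$-periodic. The only place requiring genuine care is the roof step: confirming that one can represent $\alpha$ by a fraction whose numerator \emph{and} denominator have cones in $\cP$. Everything else is a straightforward combination of the Balmer-style support formalism, quasi-compactness of basic opens, and the bijection between Thomason subsets and thick $\otimes$-ideals.
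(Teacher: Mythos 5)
Your proof is correct and follows essentially the same route as the paper: represent the isomorphism $\unit\cong\Sigma^d\unit$ in $\cK/\cP$ by a roof in $\cK$ whose two legs have cones in $\cP$, and take $U$ to be the intersection of the complements of their supports (your $U(c_s)\cap U(c_t)$ is exactly the paper's $\supp(\cone(\alpha))^c\cap\supp(\cone(\beta))^c$). The extra details you supply---that the second leg also has cone in $\cP$, and the quasi-compactness of the basic opens---are exactly the right justifications.
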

\begin{proof}
By assumption we have $\unit\cong\Sigma^d\unit$ in $\cK/\cP$.
This is witnessed by a fraction $\unit\xleftarrow{\alpha} x\xrightarrow{\beta} \Sigma^d\unit$ in~$\cK$ where both $\alpha$ and $\beta$ have cone contained in~$\cP$.
Then $U=\supp(\cone(\alpha))^c\cap\supp(\cone(\beta))^c$ does the job.
\end{proof}

\begin{cor}
\label{sta:d-periodic-open}
Let $\cK$ be a tt-category and $d>0$.
The subset $\Per_d(\cK)\subseteq\Spc(\cK)$ is open.
Hence also $\Per(\cK)$ is open.
\qed
\end{cor}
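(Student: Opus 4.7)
The plan is to deduce openness of $\Per_d(\cK)$ directly from the spreading-out \Cref{sta:d-periodic-spreadingout}, and then to obtain openness of $\Per(\cK)$ as a formal consequence.

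Let me fix $\cP \in \Per_d(\cK)$. By \Cref{sta:d-periodic-spreadingout} there is a quasi-compact open neighborhood $\cP \in U \subseteq \Spc(\cK)$ such that the localization $\cK|_U$ is $d$-periodic. I claim that $U \subseteq \Per_d(\cK)$, which is enough to conclude. Let $\cQ \in U$ be arbitrary, and let $Z = U^c$ with associated tt-ideal $\cI_Z \subseteq \cK$. Since $\cQ \in U = Z^c$, a standard unwinding of $\supp$ shows $\cI_Z \subseteq \cQ$, so $\cQ$ descends to a prime of $\cK|_U$ and the further localization $(\cK|_U)/(\cQ/\cI_Z)$ is canonically identified with $\cK/\cQ$. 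Applying \Cref{sta:d-periodic-basics}(2) to the tt-functor $\cK|_U \to \cK/\cQ$, we conclude that $\cK/\cQ$ is $d$-periodic, i.e. $\cQ \in \Per_d(\cK)$.

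This shows $\Per_d(\cK)$ is open. For the second assertion, $\Per(\cK) = \bigcup_{d>0} \Per_d(\cK)$ is a union of open sets and therefore open.

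There is no real obstacle: the argument is just a bookkeeping exercise around the definition of the localization $\cK|_U$ together with the spreading-out lemma. The only point worth double-checking is the identification $(\cK|_U)/(\cQ/\cI_Z) \simeq \cK/\cQ$ and the containment $\cI_Z \subseteq \cQ$ for $\cQ \in U$, both of which are formal properties of Balmer's spectrum.
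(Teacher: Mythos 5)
Your argument is correct and is exactly the deduction the paper intends (the corollary is stated with a \qed, leaving precisely this unwinding of \Cref{sta:d-periodic-spreadingout} together with \Cref{sta:d-periodic-basics}(2) to the reader). The containment $\cI_Z\subseteq\cQ$ for $\cQ\in U$ and the induced functor $\cK|_U\to\cK/\cQ$ are indeed the only points to check, and you handle them correctly.
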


\begin{cor}
Let $\cK$ be a locally periodic tt-category.
There exists a non-empty quasi-compact open $U\subseteq\Spc(\cK)$ such that $\cK|_U$ is periodic.
\qed
\end{cor}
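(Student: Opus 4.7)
The plan is to reduce immediately to the spreading-out lemma (\Cref{sta:d-periodic-spreadingout}) for a single point. First I would dispose of the degenerate case: if $\cK=0$ then $\Spc(\cK)=\emptyset$ and there is nothing to exhibit (the statement is really meant for $\cK\neq 0$, equivalently $\Spc(\cK)\neq\emptyset$). So from now on I assume $\Spc(\cK)$ is non-empty and pick an arbitrary point $\cP\in\Spc(\cK)$.

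Next, since $\cK$ is locally periodic, $\cP\in\Per(\cK)=\bigcup_{d>0}\Per_d(\cK)$, so $\cP$ is $d$-periodic for some $d>0$. Now I would apply \Cref{sta:d-periodic-spreadingout} directly to this $\cP$ and $d$: it produces a quasi-compact open neighborhood $\cP\in U\subseteq\Spc(\cK)$ such that $\cK|_U$ is $d$-periodic. Because $\cP\in U$ the open $U$ is non-empty, and because $d>0$ the localization $\cK|_U$ is periodic in the sense of \Cref{defn:period}.

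There is no real obstacle here beyond observing that the periodic locus $\Per(\cK)$ is covered by the opens $\Per_d(\cK)$ for varying $d$, so local periodicity at any single point already forces $d$-periodicity on an entire neighborhood for a fixed $d$. (Note that we do not try to get a single $d$ that works globally, which would be false in general; this is the phenomenon recorded in \Cref{rmk:local-condition}.)
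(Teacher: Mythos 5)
Your argument is exactly the intended one: the paper marks this corollary with \qed{} as an immediate consequence of \Cref{sta:d-periodic-spreadingout}, and your proof simply spells out that deduction (pick any point, use local periodicity to get some $d>0$, spread out). Your remark about the degenerate case $\cK=0$ is a fair pedantic point that the paper leaves implicit, but otherwise there is nothing to add.
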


\begin{lem}
\label{sta:d-periodic-basics-geometric}
Let $F\colon \cK\to\cL$ be a tt-functor, with $f=\Spc(F)\colon\Spc(\cL)\to\Spc(\cK)$ the associated map on spectra.
\begin{enumerate}
\item If $d\bigm\vert d'$ then $\Per_d(\cK)\subseteq\Per_{d'}(\cK)$.
\item If $f(\cP)$ is $d$-periodic then so is~$\cP$. In other words,
\[
f\inv(\Per_d(\cK))\subseteq\Per_d(\cL).
\]
\item We have $\per_\cL(\cP)\bigm\vert\per_{\cK}(f(\cP))$.
\end{enumerate}
\end{lem}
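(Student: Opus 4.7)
The plan is to reduce each of the three statements to the corresponding assertion of \Cref{sta:d-periodic-basics} applied to a suitable tt-functor obtained from~$F$ on local categories. The single non-trivial ingredient is the construction, from~$F$ and a prime $\cP\in\Spc(\cL)$, of an induced tt-functor between local categories
\[
\bar F\colon\cK/f(\cP)\longrightarrow\cL/\cP.
\]
This comes from the fact that $f(\cP)=F\inv(\cP)$ by definition of~$f$, so the composite $\cK\xrightarrow{F}\cL\to\cL/\cP$ annihilates the tt-ideal~$f(\cP)$; the universal property of the Verdier quotient, together with the standard compatibility of $\otimes$ with tt-localization, then produces~$\bar F$ as a symmetric monoidal exact functor.

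Once $\bar F$ is in hand, the three statements become formal. For~(1), the assumption $\cP\in\Per_d(\cK)$ says that $\cK/\cP$ is $d$-periodic, so \Cref{sta:d-periodic-basics}(1) makes it $d'$-periodic, i.e.\ $\cP\in\Per_{d'}(\cK)$. For~(2), if $f(\cP)$ is $d$-periodic then $\cK/f(\cP)$ is $d$-periodic, and \Cref{sta:d-periodic-basics}(2) applied to $\bar F$ yields that $\cL/\cP$ is $d$-periodic, i.e.\ $\cP$ is. For~(3), set $d=\per_\cK(f(\cP))$; if $d>0$ then by~(2) we obtain $\per_\cL(\cP)\mid d$, and if $d=0$ the divisibility is vacuous.

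I do not expect a genuine obstacle here. The only point where care is required is the verification that $\bar F$ preserves the monoidal structure (and is exact), but this is a completely standard property of the Verdier quotient by a tt-ideal. Everything else is a mechanical transfer of the algebraic statements in \Cref{sta:d-periodic-basics} to the local-categorical setting.
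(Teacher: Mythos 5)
Your proposal is correct and is exactly what the paper means by ``unwinding the definitions'': the induced tt-functor $\cK/f(\cP)\to\cL/\cP$ (coming from $f(\cP)=F\inv(\cP)$ and the universal property of the Verdier quotient) reduces all three parts to \Cref{sta:d-periodic-basics}, which is the paper's one-line proof spelled out.
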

\begin{proof}
Unwinding the definitions this is an immediate consequence of \Cref{sta:d-periodic-basics}.
\end{proof}

We can therefore think of $\Spc(\cK)$ as filtered by the various period loci.
We presently make this picture precise.

\begin{cns}
\label{Cons:stratification}
Consider the set $\ZZ_{\geq 0}$ with the divisibility relation.
We endow it with the Alexandrov topology.
Explicitly, a subset is open if and only if with $d\in\ZZ_{\geq 0}$ it contains all divisors of~$d$.
For $\cK$ a tt-category consider the map
\begin{equation}
\label{eq:stratification}
\per_{\cK}\colon\Spc(\cK)\to \ZZ_{\geq 0}
\end{equation}
that sends a point $\cP$ to~$\per_\cK(\cP)$.
\end{cns}

In the following statement we consider $\Spc(\cK)$ as a topological space, and also as a poset with the relation given by specialization: $\cP\sto \cQ$ iff $\cQ\in\overline{\{\cP\}}$.
\begin{prop}
\label{sta:period-map}
\begin{enumerate}
\item The map~\eqref{eq:stratification} is continuous.
In other words, $\Spc(\cK)$ is canonically stratified over $(\ZZ_{\geq 0},|)$.\footnote{The second sentence does not say more than the first one. But we like to think of $\Spc(\cK)$ as being decomposed (`stratified') into the fibers of this map (the `strata'). Below we will see that the strata are also particularly nice (\Cref{rmk:strata-locally-closed}).}
\item The map $\per_\cK\colon(\Spc(\cK),\sto)\to (\ZZ_{\geq 0},|)$ preserves the poset structure.
\end{enumerate}
\end{prop}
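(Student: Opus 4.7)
The plan is to derive both parts directly from results already in hand: part~(1) is essentially a rephrasing of \Cref{sta:d-periodic-open}, and part~(2) will then follow formally from part~(1) via the observation that continuous maps always preserve specialization.

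For continuity in part~(1), I would check the preimages of a basis of opens in $(\ZZ_{\geq 0},|)$. The Alexandrov topology is generated by the principal down-sets $V_d := \{d' \in \ZZ_{\geq 0} : d' \text{ divides } d\}$, one for each $d \in \ZZ_{\geq 0}$. Since $0$ does not divide any positive integer, one has $V_d \subseteq \ZZ_{>0}$ for $d>0$, and consequently $\per_\cK^{-1}(V_d) = \Per_d(\cK)$, which is open by \Cref{sta:d-periodic-open}. When $d = 0$, every integer divides~$0$, so $V_0 = \ZZ_{\geq 0}$ and its preimage is all of $\Spc(\cK)$, which is trivially open.

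For part~(2), one first identifies specialization on both sides. In Balmer's convention, $\cP \sto \cQ$ in $\Spc(\cK)$ means $\cQ \in \overline{\{\cP\}}$, which amounts to $\cQ \subseteq \cP$. In $(\ZZ_{\geq 0},|)$ with the Alexandrov topology the closed sets are upward-closed under divisibility, so $\overline{\{d\}} = \{d' : d \text{ divides } d'\}$ and hence $d \sto d'$ iff $d$ divides $d'$. Preservation of specialization therefore amounts to the statement: $\cQ \subseteq \cP$ implies $\per_\cK(\cP)$ divides $\per_\cK(\cQ)$. This is a formal consequence of part~(1), but may also be derived directly: the inclusion $\cQ \subseteq \cP$ induces a quotient tt-functor $\cK/\cQ \to \cK/\cP$, to which \Cref{sta:d-periodic-basics}(3) applies.

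I do not anticipate any real obstacle; the proof is essentially an unwinding of definitions, the key inputs (openness of $\Per_d(\cK)$ and the basic behaviour of periods under tt-functors) being already in place. The only subtlety is the correct handling of~$0$ in the divisibility relation: it functions as the maximum element of $(\ZZ_{\geq 0},|)$ and corresponds to non-periodic points, which is why $\per_\cK^{-1}(V_d) = \Per_d(\cK)$ exactly (without any extra contribution from non-periodic points) when $d > 0$.
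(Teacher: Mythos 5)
Your proof is correct and follows essentially the same route as the paper: continuity is checked on the basic opens $\{e\mid e\bigm\vert d\}$, splitting into the cases $d>0$ (where the preimage is $\Per_d(\cK)$, open by \Cref{sta:d-periodic-open}) and $d=0$ (where the preimage is everything), and part~(2) comes from the quotient functor $\cK/\cQ\to\cK/\cP$ together with \Cref{sta:d-periodic-basics}. Your extra remark about the role of $0$ in the divisibility order is a correct and worthwhile clarification, but nothing in substance differs from the paper's argument.
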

\begin{proof}
Let $d\in\ZZ_{\geq 0}$ and consider the open $U=\{e\mid e| d\}$.
If $d>0$ then $\per_\cK\inv(U)=\Per_d(\cK)$ which is open, by \Cref{sta:d-periodic-open}.
If $d=0$ then $U=\ZZ_{\geq 0}$ so that $\per_\cK\inv(U)=\Spc(\cK)$ is open as well.
This shows that $\per_\cK$ is continuous.

For the second statement let $\cP\sto\cQ$. This is equivalent to $\cQ\subseteq\cP\subseteq\cK$.
Thus a canonical functor $\cK/\cQ\to\cK/\cP$ and we conclude by \Cref{sta:d-periodic-basics}.
\end{proof}

\begin{defn}
The resulting stratification on~$\Spc(\cK)$ is called the \emph{period stratification}.
\end{defn}

\begin{rmk}
\label{rmk:strata-locally-closed}
The strata are locally closed subsets (the intersection of an open and a closed subset).
The stratum over~$d\in\ZZ_{\geq 0}$ is the set of points with period~$d$.
\end{rmk}

\begin{rmk}
\label{rmk:strata-nonfunctoriality}
Let $F\colon \cK\to \cL$ be a tt-functor, and denote by $f\colon\Spc(\cL)\to\Spc(\cK)$ the associated map on spectra.
It is not true in general that $f$ respects the period stratification.
Rather, the correct relation is given by \Cref{sta:d-periodic-basics-geometric}:
\[
\per_\cL(\cP)\bigm\vert\per_\cK(f(\cP))
\]
for every~$\cP\in\Spc(\cL)$.
\end{rmk}

\begin{rmk}
\label{rmk:tt-geometry}
Let $\Catperf$ denote the $\infty$-category of small idempotent-complete stable $\infty$-categories and exact functors between them, with the usual symmetric monoidal structure. Let $\cK\in\calg(\Catperf)$ be rigid, that is, every object in $\cK$ has a dual.

The association $U(x)\mapsto \cO_{\cK}(U(x)):=(\cK/\langle x\rangle)^{\natural}$ on distinguished open subsets $U(x)\subseteq\Spc(\cK)$ underlies a sheaf~$\cO_{\cK}$ with values in $\calg(\Catperf)$ which makes $(\Spc(\cK),\cO_{\cK})$ into an affine patch of tt-geometry.
This can be deduced essentially from the results in~\cite{MR2371464} but is also proven elegantly in~\cite{aoki2025higherzariskigeometry}, using Lurie's language of structured spaces~\cite{lurie:dagV}.
From this point of view, the periodicity of points is a condition on the stalks of the structure sheaf because for every point~$\cP\in\Spec(\cK)$,
\[
\cO_{\cK,\cP}=\colim_{x\in\cP}\cO_{\cK}(U(x))=\left(\cK/\cP\right)^\natural,
\]
where the colimit is computed in $\calg(\Catperf)$.
The local periods and period loci therefore extend to non-affine tt-schemes.
\end{rmk}

\begin{rmk}
\label{rmk:tt-fields}
While some of the properties of periods and the period stratification remind us of the characteristics of rings (\Cref{rmk:period-char}), we note that the analogue of a prime characteristic doesn't seem to have the same import.
For example, there is no guarantee that a tt-field, as currently understood, has period a prime number or~$0$.
(Indeed, the Morava $K$-theories don't in general.)
Moreover, odd periods are underrepresented because of \Cref{sta:period-characteristic}.
And finally, there are perfectly respectable non-zero examples that are $1$-periodic (see \Cref{exa:stmod-elab}).
\end{rmk}

\begin{rmk}
Because periodicity can be expressed solely in terms of the triangulated structure (\Cref{sta:periodic-suspension}), the considerations here could naturally be generalized to tt-categories acting on triangulated categories.
From that perspective, the discussion in~\cite[\S\,6.1]{MR3801492} implies that global complete intersections are locally $2$-periodic.
\end{rmk}

\section{Ample families}
\label{sec:comparison}

Let $\cK$ be an essentially small stably symmetric monoidal $\infty$-category.
We have in effect defined $\per(\cK)$ as $\per(R_{\cK})$, see \Cref{rmk:period-rings}.
But how do the local periods of $\cK$ relate to the local periods of~$R_\cK$?
One would hope that the canonical comparison map~\cite{balmer:sss,aoki2025higherzariskigeometry}
\[
\comp_{\cK}\colon\Spec(\cK)\to\Spech(R_{\cK})
\]
should be relevant for this question.
In general it gives the following information:

\begin{lem}
Let $\cP\in\Spec(\cK)$ and $\gp:=\comp_{\cK}(\cP)$.
Then
\[
\per_\cK(\cP)\bigm\vert\per_{R_\cK}(\gp).
\]
\end{lem}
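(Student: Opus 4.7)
The plan is to unwind the two quantities in terms of the same data — homogeneous elements of $R_\cK$ that become invertible after localizing at $\cP$ — and then invoke \Cref{sta:graded-ring-local-period} to conclude.

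First I would recall the definition of the comparison map (from~\cite{balmer:sss,aoki2025higherzariskigeometry}): for $\cP\in\Spc(\cK)$, the image $\gp=\comp_\cK(\cP)$ is the homogeneous ideal
\[
\gp=\{\,f\in R_\cK \text{ homogeneous} \mid \cone(f)\notin\cP\,\}.
\]
Dually, a homogeneous element $f\in R_\cK$ satisfies $f\notin\gp$ if and only if $\cone(f)\in\cP$, which in turn holds if and only if $f\colon\Sigma^{\deg(f)}\unit\to\unit$ becomes an isomorphism in the Verdier quotient $\cK/\cP$.

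The second step is the observation this hands us: whenever $f\in R_\cK$ is homogeneous of degree $d\neq 0$ with $f\notin\gp$, the map $f$ witnesses $\Sigma^d\unit\cong\unit$ in $\cK/\cP$, and so $\cK/\cP$ is $d$-periodic. By \Cref{defn:period}, this means $\per_\cK(\cP)=\per(\cK/\cP)\bigm\vert d$.

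For the last step I apply \Cref{sta:graded-ring-local-period} to compute
\[
\per_{R_\cK}(\gp)=\gcd\bigl(\deg(f)\ \bigm\vert\ f\notin\gp,\ \deg(f)\neq 0\bigr).
\]
Since $\per_\cK(\cP)$ divides each of the integers in this gcd, it divides the gcd itself, proving the claim.

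There is no real obstacle here: everything reduces to a careful identification of ``$f\notin\gp$'' with ``$f$ is an isomorphism in $\cK/\cP$'', after which \Cref{sta:graded-ring-local-period} and \Cref{defn:period} are assembled mechanically. The only point that warrants care is making sure the degree-$0$ case is excluded from both sides consistently (which is built into the conventions $\gcd(\emptyset)=0$ and $\per(-)=0$ for non-periodic rings, so the divisibility statement holds trivially when $\per_{R_\cK}(\gp)=0$).
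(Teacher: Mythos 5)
Your proposal is correct and follows exactly the paper's argument: identify $f\notin\gp$ with $\cone(f)\in\cP$ (i.e.\ $f$ becomes an isomorphism in $\cK/\cP$, witnessing $\deg(f)$-periodicity), then conclude via the gcd formula of \Cref{sta:graded-ring-local-period}. The only difference is that you spell out the steps the paper leaves implicit.
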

\begin{proof}
By \Cref{sta:graded-ring-local-period}, it suffices to prove that $\per_{\cK}(\cP)\bigm\vert \deg(f)$ for every homogeneous $f\in R_{\cK}\backslash\gp$.
The latter condition means exactly that $f\colon\Sigma^{\deg(f)}\unit\to\unit$ has cone contained in~$\cP$.
The claim follows.
\end{proof}

In this section we will study situations where the comparison map (or a variant thereof) provides more precise information.
While a discussion at the level of tt-schemes would be desirable we have resisted the temptation to do so here (but see \Cref{rmk:2-graded-rings-geometry}).
In any case, in the sequel, only statements at the level of underlying spaces will be required.
Hence, from now on, $\cK$ denotes a tt-category, and throughout this section \Cref{notn:small} remains in place.

\begin{rmk}
Consider a particularly favourable case:
assume that the canonical comparison map $\comp_{\cK}\colon\Spc(\cK)\to\Spch(R_\cK)$ is a homeomorphism.
In that case, the local rings~$R_{\cK/\cP}$ are simply the stalks of $R_\cK$~\cite[Proposition~6.11]{balmer:sss}.
In fact, as we will show later (\Cref{sta:ample-local-rings}), to get this conclusion it is enough for the comparison map to be a homeomorphism onto its image (without being surjective).
Even this is in practice a restrictive assumption and to relax it we take a page from algebraic geometry (see for example~\cite{MR1970862}; of course, similar ideas are also familiar in other contexts, see for example abstract ``multigraded stable homotopy theories'' in~\cite{MR1388895}).
\end{rmk}

\begin{notn}
Let $\cK$ be a tt-category and let $x\in\cK$ be some object.
A \emph{global section} of~$x$ is a morphism $s\colon\unit\to x$ in~$\cK$.
More generally, a \emph{global section of (cohomological) degree~$d$} of~$x$ is a global section of $\Sigma^{d}x$. We denote by $U(s):=\supp(\cone(s))^c\subseteq\Spc(\cK)$ the (open) locus where~$s$ is invertible.
\end{notn}

\begin{defn}
\label{defn:ample}
A submonoid $M\subseteq\Pic(\cK)$ is called \emph{ample} if the $U(s)$ form a basis for the topology of~$\Spc(\cK)$ where $s$ runs through all global sections of all~$x\in M$.
A family of invertible objects in $\cK$ is called \emph{ample} if the monoid generated by them is.
\end{defn}

This almost recovers the notion of ampleness in algebraic geometry (\Cref{rmk:ample-AG}), where ample families of line bundles give rise to embeddings into projective varieties.
The tt-analogue is made possible in this generality by the comparison maps introduced in~\cite{MR3163513} that Dell'Ambrogio makes more widely applicable in Appendix~\ref{sec:gener-comp-maps} and which we summarize (for our purposes) in the following remark.

\begin{rmk}
\label{rmk:comparison-map-summary}
Let $M\subseteq\Pic(\cK)$ be a submonoid.
There is an associated topological space and a continuous spectral comparison map
\begin{equation}
\label{eq:comp_M}
\comp_M\colon\Spc(\cK)\to\Spch(\cR_M)
\end{equation}
where one would like to think of $\cR_M$ as the `multi-graded ring' $\oplus_{m\in M}\hom_{\cK}(\unit,m)$ of sections, and of the target of~\eqref{eq:comp_M} as its Zariski spectrum of graded prime ideals.
A moment's reflection will convince the reader that this conception is beset by coherence issues, and the solution of~\cite{MR3163513} and Appendix~\ref{sec:gener-comp-maps} is to change the meaning of a `multi-grading'.
For us, it will suffice to know that the topology on~$\Spc(\cR_M)$ is generated by opens $D(s)$ associated to global sections of elements in~$M$, and that $\comp_M\inv(D(s))=U(s)$ (\Cref{thrm:comparison-map}).
Moreover, we mention that if the coherence issues alluded to can be resolved then $\Spch(\cR_M)$ is the space underlying the usual graded spectrum (\Cref{thrm:agreement}).
For example, this is the case when $M$ is generated by a single element (\Cref{exa:original-graded}).
\end{rmk}

\begin{lem}
The map~\eqref{eq:comp_M} has dense image.
\end{lem}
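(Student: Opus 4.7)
The plan is to reduce to a statement about basic opens and then invoke a tensor-nilpotence argument. Since the opens $D(s)$ from \Cref{rmk:comparison-map-summary} form a basis for the topology of $\Spch(\cR_M)$, density of the image of $\comp_M$ is equivalent to every non-empty basic open $D(s)$ meeting the image. Using the identification $\comp_M\inv(D(s))=U(s)$, this in turn reduces to the implication $U(s)=\emptyset\Rightarrow D(s)=\emptyset$, which I establish by contrapositive.

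So assume $U(s)=\emptyset$ for some global section $s\colon\unit\to m$ of $m\in M$. Then $\cone(s)$ belongs to no prime $\otimes$-ideal of $\cK$, i.e., $\supp(\cone(s))=\Spc(\cK)$. Since $\Spc(\cK/\langle\cone(s)\rangle)=U(s)$, and a tt-category has empty spectrum if and only if it is zero (Balmer), this forces $\unit\in\langle\cone(s)\rangle$; equivalently, the localization of $\cK$ inverting~$s$ is trivial. A standard tensor-nilpotence argument---passing to an Ind-completion of~$\cK$, where the telescope inverting $s$ exists as a bona fide colimit, and using that its vanishing forces the identity $\unit\to\unit$ to become null after finitely many twists by $s$---then produces an integer $N\geq 1$ with $s^{\otimes N}=0$ in $\cR_M$. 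Hence $s$ lies in every graded prime of $\cR_M$, and $D(s)=\emptyset$ as required.

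The main obstacle is the nilpotence step: it is where the argument converts the geometric/support-theoretic input $\supp(\cone(s))=\Spc(\cK)$ into the algebraic vanishing $s^{\otimes N}=0$ needed to constrain the graded primes of $\cR_M$. Everything else is a formal unwinding of the identities established in Appendix~\ref{sec:gener-comp-maps}, in particular the description of basic opens and the formula $\comp_M\inv(D(s))=U(s)$.
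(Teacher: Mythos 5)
Your proof is correct and follows the same route as the paper: reduce density to the implication $U(s)=\emptyset\Rightarrow D(s)=\emptyset$ via $\comp_M\inv(D(s))=U(s)$, and deduce from $U(s)=\emptyset$ that $s$ is $\otimes$-nilpotent. The paper simply asserts the nilpotence step, which your telescope/compactness argument correctly fills in.
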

\begin{proof}
If the subset $\comp_M\inv(D(s))=U(s)$ just mentioned is empty for some $s\colon \unit\to m$ with $m\in M$, this means that $s$ is $\otimes$-nilpotent in~$\cK$ so that also $D(s)=\emptyset$.
This shows that $\comp_M$ has dense image.
\end{proof}

We can now formulate the expected result that follows directly from Appendix~\ref{sec:gener-comp-maps}.
\begin{prop}
\label{sta:M-ample-homeo}
For a submonoid $M\subseteq\Pic(\cK)$ the following are equivalent:
\begin{enumerate}[(i)]
\item \label{it:ample}
$M$ is ample.
\item \label{it:homeo}
The comparison map
\[
\comp_M\colon\Spc(\cK)\to\Spch(\cR_M)
\]
is a homeomorphism onto its image.
\end{enumerate}
\end{prop}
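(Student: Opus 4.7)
The plan is to bootstrap directly off the two structural facts about the comparison map recorded in \Cref{rmk:comparison-map-summary}: the topology of $\Spch(\cR_M)$ is generated by the opens $D(s)$ indexed by global sections $s\colon\unit\to m$ of elements $m\in M$, and $\comp_M\inv(D(s))=U(s)$ for every such~$s$. Both implications then become essentially formal.

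For (ii)$\Rightarrow$(i) the argument should be tautological. Since $M$ is a submonoid of~$\Pic(\cK)$, tensoring sections gives $D(s_1\otimes s_2)=D(s_1)\cap D(s_2)$, so the generating collection $\{D(s)\}_s$ is closed under finite intersections and is in fact a basis for the topology on~$\Spch(\cR_M)$. Assuming $\comp_M$ is a homeomorphism onto its image, pulling back this basis through $\comp_M$ yields a basis of $\Spc(\cK)$; by the preimage identity, this pulled-back basis is precisely $\{U(s)\}_s$, which is the ampleness condition.

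For (i)$\Rightarrow$(ii) I would verify injectivity and openness onto the image separately, each reducing to the same preimage identity. For injectivity: $\Spc(\cK)$ is spectral hence $T_0$, so any two distinct points are separated by some open set; by ampleness this open can be taken of the form $U(s)$, and then $\comp_M$ sends the two points to distinct sides of $D(s)$. For openness onto the image: since $\{U(s)\}_s$ is a basis of $\Spc(\cK)$ it suffices to check that $\comp_M(U(s))=D(s)\cap\mathrm{im}(\comp_M)$ for each~$s$, and both inclusions follow directly from $\comp_M\inv(D(s))=U(s)$. Combined with the already noted continuity of $\comp_M$, this gives the desired homeomorphism onto the image. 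The only real obstacle is the bookkeeping that $\{D(s)\}_s$ is a basis and not merely a subbasis of $\Spch(\cR_M)$, which is exactly where the submonoid condition on~$M$ enters; with that in hand every step is elementary, and all substantive content has been absorbed into Appendix~\ref{sec:gener-comp-maps}.
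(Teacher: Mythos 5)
Your proposal is correct and follows essentially the same route as the paper's proof: both directions reduce to the identity $\comp_M\inv(D(s))=U(s)$ from \Cref{rmk:comparison-map-summary}, with injectivity obtained from the $T_0$ property plus ampleness, openness onto the image from $\comp_M(U(s))=D(s)\cap\mathrm{im}(\comp_M)$, and the converse by pulling back the basis $\{D(s)\}_s$. The only difference is that you spell out why $\{D(s)\}_s$ is a basis rather than a subbasis (via $D(s_1\otimes s_2)=D(s_1)\cap D(s_2)$, using that $M$ is a submonoid), a point the paper delegates to the appendix.
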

\begin{proof}
We first show \ref{it:ample}$\Rightarrow$\ref{it:homeo}.
Let $s$ be a global section of some $m\in \cM$.
As mentioned in \Cref{rmk:comparison-map-summary}, we have $\comp_M\inv(D(s))=U(s)$.
Since $\Spc(\cK)$ is a $T_0$-space, ampleness implies injectivity.
Again by ampleness, the map~$\comp_M$ is also open onto its image.

We now show the converse \ref{it:homeo}$\Rightarrow$\ref{it:ample}.
As $s$ runs through the global sections of all~$x\in M$, the subsets $D(s)$ define a basis for the topology on $\Spch(\cR_M)$ (\Cref{rmk:comparison-map-summary}).
By assumption then, $U(s)=\comp_M\inv(D(s))$ define a basis on~$\Spc(\cK)$.

\end{proof}

\begin{cor}
The comparison map
\[
\comp_{\cK}\colon\Spc(\cK)\to\Spch(R_\cK)
\]
is a homeomorphism onto its image if and only if $\{\Sigma^d\unit, d\in\ZZ\}$ is ample.
\end{cor}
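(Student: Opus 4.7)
The plan is to apply \Cref{sta:M-ample-homeo} directly, taking $M$ to be the submonoid $\{\Sigma^d\unit\mid d\in\ZZ\}\subseteq\Pic(\cK)$. Note that this set is already a submonoid, since $\Sigma^d\unit\otimes\Sigma^e\unit\cong\Sigma^{d+e}\unit$. By \Cref{defn:ample}, the family $\{\Sigma^d\unit,d\in\ZZ\}$ is ample precisely when this submonoid is. So the essential task is to identify the appendix's comparison map $\comp_M\colon\Spc(\cK)\to\Spch(\cR_M)$ with the classical comparison map $\comp_\cK\colon\Spc(\cK)\to\Spch(R_\cK)$ considered at the beginning of this section.

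For that identification, first recall from \Cref{rmk:period-rings} that the graded endomorphism ring $R_\cK=\hom_\cK(\Sigma^\bullet\unit,\unit)$ is precisely the $\ZZ$-graded ring assembled from the global sections of the elements of~$M$. Since $M$ is generated as a subgroup of $\Pic(\cK)$ by the single element $\Sigma\unit$, the coherence issues flagged in \Cref{rmk:comparison-map-summary} disappear (cf.\ \Cref{exa:original-graded}), and the target space $\Spch(\cR_M)$ is canonically the ordinary graded spectrum $\Spch(R_\cK)$. Under this homeomorphism of targets, $\comp_M$ and $\comp_\cK$ coincide, because both are characterized on a basis of opens by the relation $\comp\inv(D(s))=U(s)$ for $s$ a homogeneous global section, and the common target is sober.

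Granting this identification, the corollary is immediate: the family $\{\Sigma^d\unit\}$ is ample iff the submonoid $M$ is ample, iff (by \Cref{sta:M-ample-homeo}) $\comp_M$ is a homeomorphism onto its image, iff $\comp_\cK$ is.

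The only nontrivial point is the identification of $\Spch(\cR_M)$ with the usual graded spectrum $\Spch(R_\cK)$ and the matching of the two comparison maps. This I expect to be the sole obstacle, but it is precisely the content outsourced to the appendix through \Cref{exa:original-graded}; the rest is bookkeeping.
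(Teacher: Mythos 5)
Your proposal is correct and is exactly the argument the paper intends: the corollary is stated without proof as an immediate specialization of \Cref{sta:M-ample-homeo} to the cyclic submonoid generated by $\Sigma\unit$, with the identification of $\Spch(\cR_M)$ and the two comparison maps supplied by \Cref{exa:original-graded} (via \Cref{thrm:agreement}), just as you say.
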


\begin{rmk}
\label{rmk:ample-AG} Let $X$ be a qcqs scheme.
A line bundle~$\cL$ is ample in the algebro-geometric sense iff the invertibility loci of all sections of all powers $\cL^{\otimes n}$ for $n>0$ form a basis for the topology of~$X$.
In \Cref{defn:ample} we have reproduced this in tt-geometry except that we allow also $n=0$ to account for the fact that the $\mathrm{Proj}$ of a graded ring is replaced by~$\Spch$.
One then proves in algebraic geometry that an ample line bundle gives rise to a quasi-compact open immersion with dense image.
The only thing \Cref{sta:M-ample-homeo} does not reproduce is that the image is open.
We do not know whether this is true in general.
\end{rmk}

It seems that we haven't gained much in practice: While adding invertibles other than shifts makes it easier for a family~$M$ to become ample, the space $\Spc(\cR_M)$ seemingly gets more mysterious at the same time.
To finish this section we discuss a  situation that offers a good compromise.

\begin{defn}
Let $\cK$ be a tt-category.
\begin{enumerate}
\item An invertible object $x\in\Pic(\cK)$ is called a \emph{line bundle} if it is Zariski locally on~$\Spc(\cK)$ (isomorphic to) a shift of~$\unit$.
(The shift is allowed to vary.)
\item A line bundle~$x\in\Pic(\cK)$ is called \emph{base-free} if it admits global sections~$s_j$ of some degree such that $\Spc(\cK)=\cup_jU(s_j)$.
\end{enumerate}
\end{defn}

\begin{rmk}
\begin{enumerate}
\item In other words, $x\in\Pic(\cK)$ is a line bundle if it is locally a `trivial line bundle'.
And it is moreover base-free if these local identifications are defined globally.
\item 
Since $\Spc(\cK)$ is quasi-compact one could equivalently ask for \emph{finitely many} such sections~$s_j$.
\end{enumerate}
\end{rmk}

\begin{cns}
\label{cns:reduction-to-graded}
Let $\{x_i\}$ be a finite family of base-free line bundles and let $M\subseteq\Pic(\cK)$ be the submonoid generated by them.
It is convenient to assume that the trivial line bundles are contained: $\Sigma^d\unit\in M$ for all~$d\in\ZZ$.
For each $i$ choose finitely many sections $s_{ij}\colon\unit\to \Sigma^{d_{ij}}x_i$ such that $\cup_jU(s_{ij})=\Spc(\cK)$ ($j\in J(i)$).
For any choice $\underline{j}=(j(i)\in J(i))_i$, the intersection $D(s_{\underline{j}}):=\cap_iD(s_{ij(i)})\subseteq\Spch(\cR_M)$ sits in a pullback square of topological spaces (\Cref{cor:central_loc_in_context}):
\[
\begin{tikzcd}
\Spc(\cK)
\ar[r,"{\comp_M}"]
&
\Spch(\cR_M)
\\
\Spc(\cK[s_{\underline{j}}\inv])
\ar[r, "{\comp_{\{\Sigma^\ZZ\unit\}}}"]
\ar[u, hook]
&
D(s_{\underline{j}})
\ar[u, hook]
\end{tikzcd}
\]
Moreover, by \Cref{exa:original-graded}, the bottom horizontal arrow identifies canonically with the usual graded comparison map.
In particular, $D(s_{\underline{j}})$ canonically identifies with the graded spectrum of a graded ring.
We deduce that $\cup_{\underline{j}}D(s_{\underline{j}})\subseteq\Spch(\cR_M)$ has a canonical structure of a graded scheme and the comparison map for $M$ factors through it.
Taking the union over all possible choices of sections yields an open $P\{x_i\}\subseteq\Spch(\cR_M)$ that admits a canonical structure of graded scheme, and a factorization
\[
\Spc(\cK)\xrightarrow{\comp_M} P\{x_i\}.
\]
(The last step is intended only to remove the dependency on the sections.
In practice, it might be more convenient to work with a single $\cup_{\underline{j}}D(s_{\underline{j}})$ as above.)
\end{cns}

\begin{defn}
A tt-category is called \emph{divisorial} if it admits a finite ample family of base-free line bundles.
\end{defn}

\begin{notn}
Recall the structure sheaf $\cO^{\gr}_K$ of graded rings on~$\Spc(\cK)$.
It is a sheaf whose stalk at a point~$\cP\in\Spc(\cK)$ is the local graded endomorphism ring~$R_{\cK/\cP}$.
We denote by $\Specgr(\cK):=\left(\Spc(\cK),\cO_K^{\gr}\right)$ the associated locally graded ringed space~\cite[\S\,6]{balmer:sss}.
\end{notn}

\begin{prop}
\label{sta:ample-local-rings-general}
Let $\cK$ be a divisorial tt-category.
\begin{enumerate}
\item Any choice of finite ample family of base-free line bundles $\{x_i\}$ identifies  $\Spc(\cK)$ (via \Cref{cns:reduction-to-graded}) as a subspace of the graded scheme~$P\{x_i\}$.
\item For each $\cP\in\Spc(\cK)$ with image $\gp\in P\{x_i\}$ the local rings identify: $R_{\cK/\cP}\cong \cO_{P\{x_i\},\gp}$.
\item In particular, if the image of $\Spc(\cK)$ in~$P\{x_i\}$ is open then $\Specgr(\cK)$ is itself a graded scheme.
\end{enumerate}
\end{prop}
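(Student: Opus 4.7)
The plan is to leverage \Cref{cns:reduction-to-graded} together with \Cref{sta:M-ample-homeo} to reduce every statement to the ordinary graded comparison map on each affine piece $D(s_{\underline{j}})$.

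For~(1), let $M\subseteq\Pic(\cK)$ denote the submonoid generated by $\{x_i\}$ together with all shifts of~$\unit$; it is ample because $\{x_i\}$ already is. \Cref{sta:M-ample-homeo} then identifies $\comp_M\colon\Spc(\cK)\to\Spch(\cR_M)$ as a homeomorphism onto its image. Base-freeness guarantees that for every $\cP\in\Spc(\cK)$ there is a choice function $\underline{j}$ with $\cP\in\bigcap_i U(s_{ij(i)})$, hence $\comp_M(\cP)\in D(s_{\underline{j}})\subseteq P\{x_i\}$. Since $P\{x_i\}\hookrightarrow\Spch(\cR_M)$ is a topological embedding (being an open subspace), the corestricted map $\Spc(\cK)\to P\{x_i\}$ is itself a homeomorphism onto its image.

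For~(2), fix $\underline{j}$ with $\gp\in D(s_{\underline{j}})$, so that $\cP\in U(s_{\underline{j}})$. By \Cref{cns:reduction-to-graded} and \Cref{exa:original-graded}, the restriction of $\comp_M$ over $D(s_{\underline{j}})$ is canonically the usual graded comparison map of the localization $\cK[s_{\underline{j}}\inv]$, and the graded-scheme structure inherited by $D(s_{\underline{j}})$ agrees with the ordinary graded structure sheaf on $\Spch(R_{\cK[s_{\underline{j}}\inv]})$. Its stalk at $\gp$ is the local graded endomorphism ring $R_{\cK[s_{\underline{j}}\inv]/\cP'}$, where $\cP'$ denotes the image of $\cP$, by \cite[Proposition~6.11]{balmer:sss}. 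Since $\cP\in U(s_{\underline{j}})$ the kernel of the Verdier localization $\cK\to\cK[s_{\underline{j}}\inv]$ is contained in $\cP$, so the induced functor $\cK/\cP\to\cK[s_{\underline{j}}\inv]/\cP'$ is an equivalence; passing to graded endomorphism rings of~$\unit$ yields $R_{\cK/\cP}\cong R_{\cK[s_{\underline{j}}\inv]/\cP'}\cong\cO_{P\{x_i\},\gp}$.

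Part~(3) is then formal: if the image of $\Spc(\cK)$ in~$P\{x_i\}$ is open, (1) realizes it as an open subspace of the graded scheme $P\{x_i\}$, and (2) identifies the restricted structure sheaf with~$\cO^{\gr}_\cK$; hence $\Specgr(\cK)$ acquires the structure of a graded scheme. The principal difficulty I anticipate lies in (2), namely in carefully matching the structure sheaf on the abstract union $P\{x_i\}$ with the local pieces coming from $\cK[s_{\underline{j}}\inv]$ and in verifying that the local identifications of~$R_{\cK/\cP}$ with the various stalks glue coherently across different choices of~$\underline{j}$. These amount to bookkeeping issues that dissolve once the machinery of Appendix~\ref{sec:gener-comp-maps} is brought to bear, so there should be no genuinely new input beyond what~\cite{MR3163513} already provides.
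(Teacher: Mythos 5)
Your parts (1) and (3) follow the paper's route exactly (ampleness gives the homeomorphism onto the image via \Cref{sta:M-ample-homeo}, base-freeness lands the image in $P\{x_i\}$ via \Cref{cns:reduction-to-graded}, and (3) is formal given (1) and (2)), and your reduction of (2) to the affine pieces $D(s_{\underline{j}})$, where only shifts of the unit are involved, is also what the paper does. The gap is in the one step of (2) that carries the actual content: you invoke \cite[Proposition~6.11]{balmer:sss} to identify the stalk $\bigl(R_{\cK[s_{\underline{j}}\inv]}\bigr)_{\gp}$ with $R_{\cK[s_{\underline{j}}\inv]/\cP'}$, but that result requires the comparison map $\comp\colon\Spc(\cK[s_{\underline{j}}\inv])\to\Spch(R_{\cK[s_{\underline{j}}\inv]})$ to be a homeomorphism, i.e.\ in particular surjective. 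In your setting (read off from the pullback square of \Cref{cns:reduction-to-graded}) it is only a homeomorphism onto its image: the image is dense but need not be all of $D(s_{\underline{j}})$. So the citation does not apply, and this is precisely the point the paper flags as requiring a new argument.

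The missing argument is \Cref{sta:ample-local-rings} (which the paper isolates and proves for exactly this purpose): from the homeomorphism-onto-image hypothesis one deduces $\gen(\cP')=\bigcap_{f\notin\gp}U(f)$, hence that $\cP'$ is generated by the cones of the homogeneous $f\notin\gp$, so that $\cK[s_{\underline{j}}\inv]/\cP'$ is a \emph{central localization} and $R_{\cK[s_{\underline{j}}\inv]/\cP'}\cong R_{\cK[s_{\underline{j}}\inv]}[\tfrac{1}{f}\mid f\notin\gp]=(R_{\cK[s_{\underline{j}}\inv]})_{\gp}$. Your closing remark that ``there should be no genuinely new input beyond what \cite{MR3163513} already provides'' is therefore not quite right: the extension of the stalk identification from surjective comparison maps to embeddings is the genuinely new input here, and it is not mere bookkeeping. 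With that lemma supplied, the rest of your argument (the equivalence $\cK/\cP\simeq\cK[s_{\underline{j}}\inv]/\cP'$ and the matching of structure sheaves across choices of $\underline{j}$) goes through as you describe.
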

\begin{proof}
We already know from \Cref{sta:M-ample-homeo} that $\comp_M$ is a homeomorphism onto its image.
Therefore the first statement follows directly from \Cref{cns:reduction-to-graded}.
Also, the third statement follows from this and the second statement.
The second statement, finally, depends only on an open neighborhood of~$\gp$ and we reduce to the case where $\{\Sigma^d\unit, d\in\ZZ\}$ is ample.
For later use we state this separately in \Cref{sta:ample-local-rings}.
\end{proof}

\begin{cor}
\label{sta:ample-local-rings}
Assume that the comparison map $\comp_{\cK}\colon\Spc(\cK)\to\Spch(R_\cK)$ is a homeomorphism onto its image.
Then for each $\cP\in\Spc(\cK)$ with image $\gp\in\Spch(R_\cK)$ we have canonically $R_{\cK/\cP}\cong (R_{\cK})_{\gp}$.
\end{cor}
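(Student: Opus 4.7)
The plan is to adapt Balmer's stalk computation for the structure sheaf on $\Spc(\cK)$ (Proposition~6.11 in \cite{MR3163513}): although that result assumes $\comp_\cK$ is bijective, the present hypothesis---homeomorphism onto image---is already enough for a pointwise stalk statement, because a stalk depends only on a neighborhood basis of the point.

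First, I would construct the canonical ring map $\phi\colon (R_\cK)_\gp\to R_{\cK/\cP}$. By the definition of $\gp=\comp_\cK(\cP)$, every homogeneous $f\in R_\cK\setminus\gp$ satisfies $\cone(f)\in\cP$, so $f$ becomes an isomorphism in~$\cK/\cP$ and a unit in $R_{\cK/\cP}$; the universal property of localization then provides~$\phi$.

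To prove that $\phi$ is an isomorphism, I would invoke \Cref{sta:M-ample-homeo} with $M=\{\Sigma^d\unit\mid d\in\ZZ\}$: the hypothesis is equivalent to ampleness of the shifts of $\unit$, so the basic opens $U(f)$ (for homogeneous $f\notin\gp$) form a neighborhood basis of $\cP$. Computing the stalk of $\cO^\gr_\cK$ along this basis gives
\[
R_{\cK/\cP}=\colim_{f\notin\gp}R_{(\cK/\langle\cone(f)\rangle)^\natural},
\]
and it remains to identify each term with $R_\cK[f\inv]$ compatibly with the transition maps, whereupon the colimit becomes $(R_\cK)_\gp$ and matches $\phi$.

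The main obstacle is this last identification $R_{(\cK/\langle\cone(f)\rangle)^\natural}\cong R_\cK[f\inv]$. The map from left to right exists by the universal property, and the reverse identification rests on a Koszul-style nilpotence lemma: every compact object of the thick $\otimes$-ideal $\langle\cone(f)\rangle$ is annihilated by some power of $f$, so any roof $\Sigma^n\unit\xleftarrow{s}Y\to\unit$ representing a morphism in the Verdier quotient can be refined to one whose backward leg is a power of~$f$. This is the technical crux---exactly what makes Balmer's sheaf $\cO^\gr_\cK$ behave as advertised---and is the only genuinely tt-geometric input beyond the formal bookkeeping provided by the comparison map.
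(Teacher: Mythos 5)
Your proposal is correct and follows essentially the same route as the paper: both arguments use the homeomorphism-onto-image hypothesis to see that the $U(f)$, $f\notin\gp$, cut out $\gen(\cP)$, so that $\cK/\cP$ is a central localization, and then identify the resulting graded endomorphism ring with $R_\cK[\frac{1}{f}\mid f\notin\gp]=(R_\cK)_\gp$. The ``technical crux'' you isolate---that $R_{(\cK/\langle\cone(f)\rangle)^\natural}\cong R_\cK[f\inv]$ via the nilpotence of $f$ on $\langle\cone(f)\rangle$---is exactly Balmer's central localization theorem (\cite[Theorem~3.6]{balmer:sss}, reproved in \Cref{thrm:central-loc}), which the paper simply cites rather than reproves.
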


\begin{proof}
In this proof we abbreviate $R:=R_\cK$.
For any homogeneous~$f\in R\backslash\gp$, by definition of the comparison map, $\cP\in\comp_{\cK}\inv(D(f))=U(f)$.
The map $\comp_{\cK}$ being a homeomorphism onto its image implies that\footnote{We denote by $\gen(\cP)$ the set of generalizations of~$\cP$, that is, the set $\{\cQ\mid \cQ\sto\cP\}$. It is the complement of a Thomason subset.}
\[
\gen(\cP)=\cap_{f\notin\gp}\comp_{\cK}\inv(D(f))
\]
so that $\cP$ is generated by $\cone(f)$ for $f$ varying as above.
We conclude that
\[
\cK/\cP=\cK/\langle\cone(f)\mid f\notin\gp\rangle
\]
is a central localization.
It follows that
\[
R_{\cK/\cP}=R[\frac{1}{f}\mid f\notin\gp]=R_{\gp}.\qedhere
\]
\end{proof}

\begin{rmk}
\label{rmk:2-graded-rings-geometry}
The target of the generalized comparison maps of~\cite{MR3163513} and Appendix~\ref{sec:gener-comp-maps} is a topological space whereas the reader might have expected something more structured.
Indeed, if $\cK$ is a stably symmetric monoidal $\infty$-category and $M\subseteq\Pic(\cK)$ a submonoid, then the space $\Spch(\cR_M)$ should be endowed with a natural structure sheaf valued in certain symmetric monoidal $\infty$-categories.
It would be nice to obtain the comparison map from a comparison of geometries as in~\cite[\S\,4.D]{aoki2025higherzariskigeometry}.
We have refrained from exploring this here.
In any case, the natural translation of~\cite{MR1970862} would arguably involve developing multi-graded spectra at the level of tt-schemes.
\end{rmk}

\section{D\'evissage}
\label{sec:transfer}

As discussed in \Cref{rmk:strata-nonfunctoriality}, tt-functors are only weakly compatible with periods.
In this section we're interested in tools that nonetheless allow the transfer of information about periods across maps of spectra.

\subsection{Zariski descent}
\label{sec:open-immersions}

An example of a map that \emph{does} preserve the period strata is a (pro-)open immersion:
\begin{prop}
\label{sta:open-immersion-period}
Let $\cK$ be a tt-category and $U\subseteq\Spc(\cK)$ the complement of a Thomason subset.
The localization $\cK\to\cK|_U$ induces on spectra the inclusion
\[
U\cong\Spc(\cK|_U)\hookrightarrow\Spc(\cK)
\]
and for each $\cP\in U$ corresponding to $\bar{\cP}\in\Spc(\cK|_U)$ we have $\per_{\cK}(\cP)=\per_{\cK|_U}(\bar{\cP})$.
\end{prop}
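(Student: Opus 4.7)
The plan is to handle the topological claim and the period equality separately. The topological part --- that $\Spc$ applied to the Verdier localization $\cK \to \cK|_U = \cK/\cI_Z$ is the inclusion $U \hookrightarrow \Spc(\cK)$ --- is standard Balmer tt-geometry \cite{balmer:spectrum}, since $\cI_Z$ is a thick tt-ideal and Thomason-open complements correspond to such localizations on the categorical side. Under this identification, a point $\cP \in U$ corresponds to $\bar{\cP} = \cP/\cI_Z \in \Spc(\cK|_U)$, which makes sense precisely because $\cP \in U$ is equivalent to $\cI_Z \subseteq \cP$ (as $\cI_Z$ is generated by objects whose support avoids $U$).

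For the period equality I would then identify the two local tt-categories on the nose. Applying the standard composition law for Verdier quotients to the chain of thick tt-ideals $\cI_Z \subseteq \cP \subseteq \cK$ yields a canonical tt-equivalence
\[
(\cK|_U)/\bar{\cP} \;=\; (\cK/\cI_Z)/(\cP/\cI_Z) \;\simeq\; \cK/\cP.
\]
Since the period is invariant under tt-equivalences (indeed it depends only on the unitation, by \Cref{rmk:unitation}), this immediately gives $\per_{\cK|_U}(\bar{\cP}) = \per_{\cK}(\cP)$.

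The only point that requires a moment's care is verifying that the bijection between primes $\cP \in U$ and primes $\bar{\cP} \in \Spc(\cK|_U)$ is indeed implemented by $\cP \mapsto \cP/\cI_Z$, so that the two Verdier quotients line up as written. Everything else is formal. As a sanity check, one direction of the equality --- $\per_{\cK|_U}(\bar{\cP}) \mid \per_{\cK}(\cP)$ --- also follows directly from \Cref{sta:d-periodic-basics-geometric}(3) applied to the localization functor; the reverse divisibility could be obtained by lifting an invertibility witness $\unit \leftarrow x \to \Sigma^d\unit$ from $(\cK|_U)/\bar{\cP}$ back to $\cK/\cP$ using the equivalence above, but the Verdier quotient identification gives both divisibilities simultaneously and is the cleanest route.
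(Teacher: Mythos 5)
Your argument is correct and coincides with the paper's own proof: the key step in both is that the canonical functor $\cK/\cP\to(\cK|_U)/\bar{\cP}$ is an equivalence (via the composition of Verdier quotients along $\cI_Z\subseteq\cP\subseteq\cK$), whence the periods agree. Nothing further is needed.
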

\begin{proof}
The canonical functor $\cK/\cP\to (\cK|_U)/\bar{\cP}$ is an equivalence so that
\[
\per_{\cK}(\cP)=\per_{\cK|_U}(\bar{\cP}).\qedhere{}
\]
\end{proof}

\begin{rmk}
\label{rmk:etale-descent}
It follows from \Cref{sta:open-immersion-period} that periods descend along Zariski covers.
It is natural to ask about (weaker) descent properties along other types of covers, like \'etale descent.
Unfortunately, we were not able to prove anything of that sort.
\end{rmk}

\subsection{Comparison map}
\label{sec:comparison-map}

Let us be given a finite ample family of base-free line bundles $\{x_i\}$ of~$\cK$.
As discussed in \Cref{sec:comparison}, this produces a homeomorphism onto a subspace of a graded scheme~$P\{x_i\}$.
The reader may keep in mind as an example just shifts of the unit in which case the graded scheme in question is simply $\Spech(R_{\cK})$.
(We state this case as a corollary just below.)
\begin{prop}
\label{sta:period-comparison-general}
Assume $\{x_i\}\subseteq\Pic(\cK)$ is a finite ample family of base-free line bundles.
For each $\cP\in\Spc(\cK)$ with image $\gp\in P\{x_i\}$, we have
\[
\per_{\cK}(\cP)=\per_{P\{x_i\}}(\gp).
\]
\end{prop}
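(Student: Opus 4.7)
The plan is to chain together definitional identifications, with the substantive input being \Cref{sta:ample-local-rings-general}(2) established in the previous section. In effect, the hard content of the proposition has already been extracted in the form of an isomorphism of local rings; all that remains is to translate this into a statement about periods.

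First I would unwind definitions on the tt-side. By \Cref{defn:periodic}, $\per_\cK(\cP)=\per(\cK/\cP)$. Since $\cK/\cP$ is itself a tt-category (a Verdier quotient of a tt-category by a tt-ideal), \Cref{rmk:period-rings} applies to it and gives $\per(\cK/\cP)=\per(R_{\cK/\cP})$, where $R_{\cK/\cP}$ is the graded endomorphism ring of the unit in $\cK/\cP$.

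Next I would invoke \Cref{sta:ample-local-rings-general}(2), the key input, which delivers a canonical isomorphism of graded rings $R_{\cK/\cP}\cong \cO_{P\{x_i\},\gp}$. Finally, since \Cref{defn:ring-local-period} (extended to graded schemes as noted in the remark following it) defines $\per_{P\{x_i\}}(\gp)$ to be the period of the local graded ring $\cO_{P\{x_i\},\gp}$, I conclude
\[
\per_\cK(\cP)=\per(R_{\cK/\cP})=\per(\cO_{P\{x_i\},\gp})=\per_{P\{x_i\}}(\gp).
\]

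I do not expect a real obstacle here. All the substantive work — constructing the graded scheme $P\{x_i\}$ (\Cref{cns:reduction-to-graded}), identifying $\Spc(\cK)$ with a subspace of it (\Cref{sta:M-ample-homeo}), and, most importantly, matching stalks of $\cO^{\gr}$ with local graded endomorphism rings — has already been carried out, ultimately relying on the generalized comparison maps of Appendix~\ref{sec:gener-comp-maps}. Modulo those inputs, the present statement is pure bookkeeping, and the corollary corresponding to the case $M=\{\Sigma^d\unit\}$ (where $P\{x_i\}=\Spech(R_\cK)$) is recovered immediately.
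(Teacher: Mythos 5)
Your proposal is correct and is essentially identical to the paper's own proof: the same four-step chain $\per_\cK(\cP)=\per(\cK/\cP)=\per(R_{\cK/\cP})=\per(\cO_{P\{x_i\},\gp})=\per_{P\{x_i\}}(\gp)$, using \Cref{rmk:period-rings} and \Cref{sta:ample-local-rings-general}(2) in the same way.
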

\begin{proof}
By \Cref{sta:ample-local-rings-general}, 
\begin{equation}
\label{eq:computation-localized-ring}
R_{\cK/\cP}\cong\cO_{P\{x_i\},\gp}
\end{equation}
and hence
\begin{align*}
  \per_{\cK}(\cP)&=\per(\cK/\cP)&&\text{by definition}\\
                 &=\per(R_{\cK/\cP})&&\text{by \Cref{rmk:period-rings}}\\
                 &=\per(\cO_{P\{x_i\},\gp})&&\text{by~\eqref{eq:computation-localized-ring}}\\
  &=\per_{P\{x_i\}}(\gp) &&\text{by definition}.\qedhere
\end{align*}
\end{proof}

\begin{cor}
\label{sta:period-comparison}
Assume $\comp\colon\Spc(\cK)\to\Spch(R_{\cK})$ is a homeomorphism onto its image.
For any $\cP\in\Spc(\cK)$ with image $\gp$ we have
\[
\per_{\cK}(\cP)=\per_{R_{\cK}}(\gp).
\]
\end{cor}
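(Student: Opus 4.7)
The plan is to obtain this as a direct specialisation of the more general \Cref{sta:period-comparison-general}, by taking the single base-free line bundle $\unit\in\Pic(\cK)$ (whose defining section is just $\id_{\unit}$, covering all of $\Spc(\cK)$). With $\{x_i\}=\{\unit\}$, the submonoid $M$ generated by them together with all trivial line bundles is exactly $\{\Sigma^d\unit\mid d\in\ZZ\}$, so $\cR_M=R_{\cK}$ and the associated graded scheme $P\{\unit\}$ coincides with (an open of) $\Spch(R_{\cK})$. By the corollary immediately following \Cref{sta:M-ample-homeo}, the hypothesised homeomorphism-onto-image is equivalent to ampleness of this family, so the hypotheses of \Cref{sta:period-comparison-general} are met and it delivers the desired equality.

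For a more self-contained presentation, I would instead string together the relevant definitions. First, by \Cref{defn:periodic}, $\per_{\cK}(\cP)=\per(\cK/\cP)$; and by \Cref{rmk:period-rings}, $\per(\cK/\cP)=\per(R_{\cK/\cP})$. Next, I would invoke \Cref{sta:ample-local-rings}, which under precisely the hypothesis of the present corollary provides a canonical identification
\[
R_{\cK/\cP}\cong (R_{\cK})_{\gp}.
\]
Finally, by \Cref{defn:ring-local-period}, $\per((R_{\cK})_{\gp})=\per_{R_{\cK}}(\gp)$, closing the chain.

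Since \Cref{sta:ample-local-rings} is already stated and available, there is really no obstacle: the argument is entirely formal, and the only thing to check is that the hypothesis of the corollary is exactly the hypothesis needed to apply \Cref{sta:ample-local-rings}, which it is verbatim. The ``hard work'' has already been done in \Cref{sec:comparison}, where the identification of local rings under ampleness was established; what remains is bookkeeping between the category-theoretic and ring-theoretic notions of period.
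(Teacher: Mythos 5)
Your proposal is correct and matches the paper exactly: the corollary is stated there as the immediate specialisation of \Cref{sta:period-comparison-general} to the family $\{\Sigma^d\unit\}$, and your ``self-contained'' chain (definition of $\per_\cK(\cP)$, \Cref{rmk:period-rings}, \Cref{sta:ample-local-rings}, definition of $\per_{R_\cK}(\gp)$) is precisely the proof of that proposition unwound in this special case. No gaps.
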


The following is just a translation of \Cref{sta:period-comparison}.
\begin{cor}
\label{sta:periodicity-comparison}
Under the same assumptions we have $\comp\inv(\Per_d(R_\cK))=\Per_d(\cK)$ and $\comp\inv(\Per(R_\cK))=\Per(\cK)$.
\end{cor}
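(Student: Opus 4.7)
The plan is to unwind the definitions of the periodic and $d$-periodic loci on both sides and show that, thanks to \Cref{sta:period-comparison}, the conditions defining them transport faithfully through $\comp$. Since everything reduces to a pointwise comparison of local periods, the argument should be essentially a one-line translation, and the only ``work'' is bookkeeping with \Cref{defn:ring-local-period,defn:periodic}.

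First I would fix $d>0$ and take $\cP\in\Spc(\cK)$ with image $\gp=\comp(\cP)\in\Spch(R_\cK)$. By definition, $\cP\in\Per_d(\cK)$ means $\per_\cK(\cP)\mid d$, and $\gp\in\Per_d(R_\cK)$ means $\per_{R_\cK}(\gp)\mid d$. By \Cref{sta:period-comparison} (which applies thanks to the standing homeomorphism assumption), these two local periods are equal, so the two conditions are equivalent. This gives $\comp\inv(\Per_d(R_\cK))=\Per_d(\cK)$.

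For the second equality, I would simply take the union over $d>0$: by definition $\Per(\cK)=\bigcup_{d>0}\Per_d(\cK)$ and $\Per(R_\cK)=\bigcup_{d>0}\Per_d(R_\cK)$, and preimages commute with unions, so
\[
\comp\inv(\Per(R_\cK))=\comp\inv\bigl(\bigcup_{d>0}\Per_d(R_\cK)\bigr)=\bigcup_{d>0}\comp\inv(\Per_d(R_\cK))=\bigcup_{d>0}\Per_d(\cK)=\Per(\cK).
\]

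There is no real obstacle here; the content is entirely in \Cref{sta:period-comparison} (and therefore ultimately in \Cref{sta:ample-local-rings}, which identifies $R_{\cK/\cP}$ with the stalk $(R_\cK)_\gp$). The role of this corollary is merely to package that pointwise identification as a statement about the global period stratifications on the two spaces.
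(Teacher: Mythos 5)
Your proof is correct and is exactly the translation the paper has in mind (the paper gives no separate argument, stating that the corollary ``is just a translation of'' \Cref{sta:period-comparison}). The only implicit step is the equivalence ``$\cP$ is $d$-periodic $\Leftrightarrow$ $\per_\cK(\cP)\mid d$'', which holds because the degrees of units in $R_{\cK/\cP}$ form a subgroup of $\ZZ$ (and is already recorded in the two equivalent forms of \Cref{defn:ring-local-period}), so there is no gap.
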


\subsection{Fully faithful functors}
\label{sec:fully-faithf-funct}

For fully faithful functors the situation is more subtle (cf.~\Cref{rmk:unitation}).
\begin{prop}
\label{sta:ff-period}
Let $F\colon\cK\hookrightarrow\cL$ be a fully faithful tt-functor with $\cK$ rigid, and associated map $f$ on spectra.
For $\cP\in\Spc(\cK)$ and $d>0$, the following are equivalent:
\begin{enumerate}[(i)]
\item the point $\cP$ is $d$-periodic,
\item the localization $\cL|_{f\inv(\gen(\cP))}$ is $d$-periodic.
\end{enumerate}
\end{prop}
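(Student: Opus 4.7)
The plan is to show that the induced tt-functor on Verdier quotients is fully faithful, reducing the equivalence to the argument used in \Cref{rmk:unitation}. Set $\cJ := \langle F(\cP)\rangle_\cL$, the tt-ideal of $\cL$ generated by $F(\cP)$, so that $\cL|_{f^{-1}(\gen(\cP))}$ coincides with the Verdier quotient $\cL/\cJ$. Since $F(\cP) \subseteq \cJ$, the composite $\cK \xrightarrow{F} \cL \twoheadrightarrow \cL/\cJ$ descends to a tt-functor
\[
\bar F \colon \cK/\cP \longrightarrow \cL/\cJ
\]
with $\bar F(\unit) = \unit$. The implication (i) $\Rightarrow$ (ii) then follows immediately from \Cref{sta:d-periodic-basics}(2) applied to $\bar F$.

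For the converse (ii) $\Rightarrow$ (i), the key claim is that $\bar F$ is fully faithful. Granting this, the reasoning of \Cref{rmk:unitation} applies: an isomorphism $\unit \cong \Sigma^d\unit$ in $\cL/\cJ$ is a morphism between two objects in the image of $\bar F$, so by fullness both it and its inverse lift to morphisms in $\cK/\cP$; faithfulness then forces the lifts to remain mutually inverse, yielding the desired isomorphism $\unit \cong \Sigma^d\unit$ in $\cK/\cP$.

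The full-faithfulness of $\bar F$ is where the rigidity of $\cK$ enters decisively. By roof calculus, a morphism $F(x) \to F(y)$ in $\cL/\cJ$ is represented by a span $F(x) \xleftarrow{s} w \xrightarrow{t} F(y)$ in $\cL$ with $\cone(s) \in \cJ$, and one wants to dominate it by a span $F(x) \leftarrow F(w') \to F(y)$ with $w' \in \cK$ and the cone of the new denominator in $\cP$. Since $\cJ = \langle F(\cP)\rangle_\cL$ and every $c \in \cP$ has a dual $c^\vee \in \cK$, the duals $F(c^\vee)$ allow one to absorb the ``non-image'' parts of $w$ and construct such a refinement $F(w') \to w$ in $\cL$; fully-faithfulness of $F$ itself then descends the refined roof to $\cK$ with cone in $\cP$. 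Faithfulness is proved by the analogous cofinality argument on composites.

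The principal obstacle is executing this rectification of roofs rigorously. A clean framework for it is the theory of fully faithful tt-functors from rigid tt-categories, as developed in~\cite{sanders2025tensortriangulargeometryfully}, which should supply the required statement. An alternative, more hands-on route proceeds through graded endomorphism rings: rigidity of $\cK$ identifies $R_{\cK/\cP}$ with the graded ring localization $R_\cK[S_\cP^{-1}]$ (where $S_\cP$ consists of morphisms of shifts of $\unit$ with cone in $\cP$), fully-faithfulness of $F$ yields $R_\cK = R_\cL$, and a roof-cofinality argument shows $R_{\cK/\cP} \to R_{\cL/\cJ}$ is bijective in each degree, so that \Cref{rmk:period-rings} equates the two periods.
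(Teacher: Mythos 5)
Your skeleton is the paper's: reduce both implications to the claim that the induced functor $\bar F\colon \cK/\cP\to\cL|_{f\inv(\gen(\cP))}$ is fully faithful, and then conclude with \Cref{rmk:unitation} (which handles both directions at once, since a fully faithful embedding preserves the period exactly). The gap is that you do not actually prove this full faithfulness, and the sketch you offer is not a proof. Saying that the duals $F(c^\vee)$ for $c\in\cP$ ``allow one to absorb the non-image parts of $w$'' is precisely the point at issue: given a roof $F(x)\xleftarrow{s}w\xrightarrow{t}F(y)$ with $\cone(s)\in\langle F(\cP)\rangle$, there is no evident way to dominate it by a roof with apex in the image of $F$ by direct manipulation inside the small category~$\cL$, and no such rectification argument is known to the paper's author either. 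The paper isolates the claim as \Cref{sta:ff-quotient} and proves it by invoking \cite[Lemma~5.1]{MR4866349}; that argument does not rectify roofs at all, but passes to the ambient big (or module) categories, identifies $\hom_{\cL/\cI_W}(Fx,Fy)$ with maps into the Bousfield localization of $Fy$, and uses the projection formula---which is exactly where the rigidity of~$\cK$ enters. Deferring to \cite{sanders2025tensortriangulargeometryfully} as a source that ``should supply the required statement'' does not close this.

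Two smaller points. First, $\cL|_{f\inv(\gen(\cP))}$ is by definition the quotient by the (radical) tt-ideal of objects supported on $f\inv(\gen(\cP)^c)$, which \Cref{sta:ff-quotient}(1) identifies as the \emph{radical} tt-ideal generated by $F(\cP)$; since $\cL$ is not assumed rigid this need not equal $\langle F(\cP)\rangle_{\cL}$, so your identification $\cL|_{f\inv(\gen(\cP))}=\cL/\cJ$ already uses part of the lemma you are trying to avoid. Second, the ``alternative route'' is not sound as stated: rigidity of $\cK$ alone does not identify $R_{\cK/\cP}$ with a central localization $R_{\cK}[S_\cP\inv]$ (compare \Cref{sta:ample-local-rings}, which needs the comparison map to be a homeomorphism onto its image), and the asserted bijectivity of $R_{\cK/\cP}\to R_{\cL/\cJ}$ is just the full faithfulness question again, restricted to shifts of the unit.
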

\begin{proof}
The point~$\cP$ is $d$-periodic if $\cK/\cP$ is.
The induced functor
\[
\cK/\cP=\cK|_{\gen(\cP)}\to\cL|_{f\inv(\gen(\cP))}
\]
is fully faithful by \Cref{sta:ff-quotient} below hence the claim follows from \Cref{rmk:unitation}.
\end{proof}

\begin{lem}
\label{sta:ff-quotient}
Let $F\colon \cK\to \cL$ be a fully faithful tt-functor with $\cK$ rigid, let $\cI_V\subseteq\cK$ be the tt-ideal corresponding to the Thomason subset $V\subseteq \Spc(\cK)$.
Let $W=f\inv(V)\subseteq\Spc(\cL)$ corresponding to the radical tt-ideal~$\cI_W$ (where $f$ is the map on spectra induced by~$F$).
Then:
\begin{enumerate}
\item The radical tt-ideal $\cI_W$ is generated by $F(\cI_V)$.
\item The induced functor $\overline{F}\colon\cK|_{V^c}\to\cL|_{W^c}$ is fully faithful.
\end{enumerate}
\end{lem}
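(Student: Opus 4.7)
The plan is to prove (1) via Balmer's classification of radical tt-ideals by Thomason subsets of the spectrum, and to prove (2) by reducing full faithfulness to maps from the unit and then invoking the Verdier calculus of fractions.

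For (1), let $\cJ\subseteq\cL$ denote the radical tt-ideal generated by $F(\cI_V)$. Since radical tt-ideals of $\cL$ are determined by their supports, I only need to verify $\supp(\cJ)=W$. The standard identity $\supp_\cL(F(x))=f\inv(\supp_\cK(x))$ for any $x\in\cK$, combined with the fact that $V=\bigcup_{x\in\cI_V}\supp_\cK(x)$ (Thomason subsets coinciding with unions of supports of objects in the associated radical tt-ideal), yields
\[
\supp(\cJ)\;=\;\bigcup_{x\in\cI_V}f\inv(\supp_\cK(x))\;=\;f\inv(V)\;=\;W,
\]
whence $\cJ=\cI_W$ by Balmer's thick subcategory theorem.

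For (2), the first step is to exploit the rigidity of $\cK$: because $F$ is symmetric monoidal and every object of $\cK$ admits a dual, $F$ preserves duals, so for all $x,y\in\cK$ we may rewrite $\hom_\cK(x,y)\cong\hom_\cK(\unit,x^\vee\otimes y)$ and $\hom_\cL(F(x),F(y))\cong\hom_\cL(\unit,F(x^\vee\otimes y))$, and these rewritings are compatible with the respective Verdier quotients. It thus suffices to show that for each $z\in\cK$ the map
\[
\hom_{\cK|_{V^c}}(\unit,z)\;\longrightarrow\;\hom_{\cL|_{W^c}}(\unit,F(z))
\]
is an isomorphism. By the Verdier calculus of fractions, both sides are colimits of $\hom(u,z)$ (resp.\ $\hom(u',F(z))$) indexed by morphisms $s\colon u\to\unit$ in $\cK$ with $\cone(s)\in\cI_V$ (resp.\ $s'\colon u'\to\unit$ in $\cL$ with $\cone(s')\in\cI_W$), and full faithfulness of $F$ identifies the hom-values on matching fractions. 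The task thereby reduces to showing the $\cK$-fractions are cofinal among the $\cL$-fractions.

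I expect this cofinality step to be the main obstacle. Given an $\cL$-fraction $s'\colon u'\to\unit$ with $\cone(s')\in\cI_W$, one must produce a $\cK$-fraction $s\colon u\to\unit$ with $\cone(s)\in\cI_V$ and a morphism $F(u)\to u'$ over $\unit$. Via the fibre sequence in $\cL$, this amounts to factoring the canonical map $\unit\to\cone(s')$ through some $F(c)$ with $c\in\cI_V$. The essential input is part (1), which places $\cone(s')$ in the radical tt-ideal generated by $F(\cI_V)$; combined with rigidity of $\cK$ this should yield the desired factorization, though one may first need to replace $\cone(s')$ (equivalently, $u'$) by a tensor power in order to absorb the radical passage from the thick tt-ideal $\langle F(\cI_V)\rangle$ to $\cI_W=\sqrt{\langle F(\cI_V)\rangle}$.
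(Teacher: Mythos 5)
Part~(1) of your proposal is correct and is essentially the paper's own argument: the support of $F(x)$ is $f\inv(\supp(x))$, and $W=f\inv(V)$ is covered by the supports of the objects $F(x)$, $x\in\cI_V$, so the classification of radical tt-ideals gives the claim. The reduction in part~(2) to maps out of the unit, using rigidity of $\cK$ and the fact that a monoidal functor preserves duals, is also fine. But your proof of~(2) is not complete: the ``cofinality step'' that you yourself flag as the main obstacle is the entire content of the statement, and the ingredients you propose (part~(1), rigidity of $\cK$, passing to tensor powers) do not suffice to carry it out inside the small categories. Concretely, you need: for every $s'\colon u'\to\unit$ in $\cL$ with $c':=\cone(s')\in\cI_W$, the canonical map $\unit\to c'$ factors through $F(c)$ for some $c\in\cI_V$. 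Part~(1) only tells you that some tensor power of $c'$ lies in the thick $\otimes$-ideal generated by $F(\cI_V)$, i.e.\ is a finite extension of summands of objects $y\otimes F(x)$ with $y\in\cL$ arbitrary and $x\in\cI_V$. A map $\unit\to y\otimes F(x)$ has no reason to factor through the image of $F$ when $y$ is not itself in that image, and rigidity of $\cK$ by itself gives you no handle on~$y$. (Two lesser issues: $\cone(s'^{\otimes n})$ is not $\cone(s')^{\otimes n}$, so the tensor-power replacement needs more care; and cofinality for a filtered colimit requires connectedness of the relevant comma categories, not just nonemptiness, which you do not address.)

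The missing ingredient is an adjoint satisfying the projection formula, and this is exactly how the paper proceeds. One embeds the situation into big tt-categories (when $F$ has a model; this is \cite[Lemma~5.1]{MR4866349}) or, in general, into the associated module categories in the sense of Balmer's homological methods; there the coproduct-preserving extension of $F$ acquires a right adjoint $U$, and the projection formula $U(y)\otimes x\cong U(y\otimes F(x))$ --- which is precisely where rigidity of $\cK$ enters --- shows that $U$ carries the localizing ideal generated by $F(\cI_V)$ into the one generated by $\cI_V$; full faithfulness of $\overline{F}$ then follows by adjunction from full faithfulness of~$F$. If you wish to keep your fraction-calculus formulation, you would have to prove the factorization claim above by this route (apply $U$ to the map $\unit\to c'$, use the projection formula, and descend to compacts); as written, your argument has a genuine gap at exactly that point.
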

\begin{proof}
For the first statement, let $x\in \cI_V$, that is, $\supp(x)\subseteq V$.
Thus $\supp(Fx)=f\inv(\supp(x))\subseteq f\inv(V)=W$ and we conclude that $F(x)\in\cI_W$.
This shows one inclusion.
For the converse, let $\cQ\in W$, that is, $f(\cQ)\in V$.
We find some $x\in \cI_V$ such that $f(\cQ)\in\supp(x)$ hence $\cQ\in f\inv(f(\cQ))\subseteq f\inv(\supp(x))=\supp(Fx)$.
This concludes the proof of the first statement.

If $F\colon \cK\to\cL$ is the restriction to compact objects of a geometric functor between big tt-categories then this statement is proven in~\cite[Lemma~5.1]{MR4866349}.
In fact, only rigidity of $\cK$ is used in this proof (for the projection formula).
So, this yields the lemma in particular whenever $F$ has a model.
In general, as pointed out to me by Greg Stevenson, one can repeat the same argument, replacing the big tt-categories by the module categories, as pioneered in~\cite{MR4064108}.
\end{proof}
\subsection{Filtered colimits}
\label{sec:filtered-colimits}

We come to the last d\'evissage tool in this section.
\begin{notn}
\label{notn:limit}
Let $(\cK_i)_i$ be a filtered diagram of tt-categories with colimit~$\cK$.
It was proved in~\cite[\S\,8]{MR3892970} that the canonical functors $\cK_i\to \cK$ induces a homeomorphism
\[
\Spc(\cK)\xrightarrow{\sim}\varprojlim_i\Spc(\cK_i).
\]
We therefore identify points $\cP\in\Spc(\cK)$ canonically with compatible families of points $\cP_i\in\Spc(\cK_i)$.
\end{notn}

\begin{lem}
\label{sta:period-limit-conditions}
With the setup in \Cref{notn:limit}, let $\cP=(\cP_i)_i\in\Spc(\cK)$ be a point.
For $d> 0$, the following are equivalent:
\begin{enumerate}[(i)]
\item
\label{it:the}
the point $\cP$ is $d$-periodic,
\item
\label{it:some}
some point $\cP_i$ is $d$-periodic,
\item
\label{it:all}
the points $\cP_i$ are eventually $d$-periodic.
\end{enumerate}
\end{lem}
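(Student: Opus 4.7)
The plan is to reduce everything to the hard direction (i)$\Rightarrow$(ii), with (iii)$\Rightarrow$(ii) being trivial and the remaining implications following immediately from functoriality of periods. For (ii)$\Rightarrow$(iii), assuming $\cP_j$ is $d$-periodic, for any $j'\geq j$ the transition tt-functor $\cK_j\to\cK_{j'}$ induces on spectra a map sending $\cP_{j'}$ to $\cP_j$ (by the compatibility of the family in \Cref{notn:limit}), so \Cref{sta:d-periodic-basics-geometric}(2) yields that $\cP_{j'}$ is $d$-periodic. The same argument applied to the canonical $\Spc(\cK)\to\Spc(\cK_j)$ gives (ii)$\Rightarrow$(i).

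The crux is (i)$\Rightarrow$(ii). Assuming $\cP$ is $d$-periodic, as in the proof of \Cref{sta:d-periodic-spreadingout} pick a witnessing fraction $\unit\xleftarrow{\alpha}x\xrightarrow{\beta}\Sigma^d\unit$ in $\cK$ with $\cone(\alpha),\cone(\beta)\in\cP$. The key input is that $\cK=\colim_i\cK_i$ is a filtered colimit of tt-categories: objects, morphisms, and finite diagrams in $\cK$ all descend to some stage. Hence we find an index $i$, an object $x_i\in\cK_i$, and morphisms $\alpha_i\colon x_i\to\unit$, $\beta_i\colon x_i\to\Sigma^d\unit$ in $\cK_i$ whose images in $\cK$ are isomorphic to $\alpha,\beta$ under the canonical tt-functor $F_i\colon\cK_i\to\cK$. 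Since $F_i$ preserves cofibre sequences, $F_i(\cone(\alpha_i))\cong\cone(\alpha)\in\cP$ and likewise for $\beta_i$. Now the homeomorphism of \Cref{notn:limit} is built from the identification $\cP_i=F_i^{-1}(\cP)$, so $\cone(\alpha_i),\cone(\beta_i)\in\cP_i$. Consequently $\alpha_i,\beta_i$ become isomorphisms in the Verdier quotient $\cK_i/\cP_i$, exhibiting $\unit\cong\Sigma^d\unit$ there, i.e., $\cP_i$ is $d$-periodic.

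The one step that requires care is the simultaneous descent of the data $(x,\alpha,\beta)$: in a filtered colimit the object $x$ first appears at some stage, the two morphisms at possibly later stages, and one passes to a common further index so that both $\alpha_i,\beta_i$ exist together. After that, membership of the cones in $\cP_i$ is automatic from $\cP_i=F_i^{-1}(\cP)$ and the fact that thick subcategories are closed under isomorphism. No rigidity or smallness hypothesis beyond those already in force is needed, since periodicity is detected entirely by endomorphisms of shifts of the unit.
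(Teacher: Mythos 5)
Your proof is correct and follows essentially the same route as the paper: the easy implications are dispatched by the functoriality of $d$-periodicity (\Cref{sta:d-periodic-basics}/\Cref{sta:d-periodic-basics-geometric}), and the crux (i)$\Rightarrow$(ii) is proved exactly as in the paper by lifting the witnessing roof $\unit\leftarrow x\rightarrow\Sigma^d\unit$ to a finite stage of the filtered colimit and using $\cP_i=F_i^{-1}(\cP)$ to see that the cones land in~$\cP_i$. The only difference is a cosmetic reorganization of which easy implications you prove (you close the cycle via (ii)$\Rightarrow$(i) directly rather than (iii)$\Rightarrow$(i)), which changes nothing of substance.
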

\begin{proof}
The implications \ref{it:some}$\Rightarrow$\ref{it:all} and \ref{it:all}$\Rightarrow$\ref{it:the} both follow from \Cref{sta:d-periodic-basics}.
For the implication \ref{it:the}$\Rightarrow$\ref{it:some}, let $\unit\xleftarrow{\alpha}x\xrightarrow{\beta}\Sigma^d\unit$ be a roof in~$\cK$ with $\cone(\alpha),\cone(\beta)\in\cP$.
There exists some~$i$ and some $x',\alpha',\beta'$ lifting $x,\alpha,\beta$ along $F\colon\cK_i\to\cK$.
By design, $\cone(\alpha')\in F\inv(\cP)=\cP_i$ and similarly for $\beta'$.
It follows that $\cP_i$ is also $d$-periodic, as claimed.
\end{proof}

\begin{cor}
\label{sta:period-limit}

Given a point $\cP=(\cP_i)_i\in\Spc(\cK)\cong\varprojlim_i\Spc(\cK_i)$ we have
\[
\per_\cK(\cP)=\varinjlim_i\per_{\cK_i}(\cP_i).
\]
(It is part of the statement that the system of non-negative integers on the right is eventually constant.)
\end{cor}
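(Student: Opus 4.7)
The plan is to deduce this corollary directly from \Cref{sta:period-limit-conditions}, together with the monotonicity of local periods under tt-functors recorded in \Cref{sta:d-periodic-basics-geometric}. Write $d:=\per_\cK(\cP)$ and $d_i:=\per_{\cK_i}(\cP_i)$. The first step is to apply \Cref{sta:d-periodic-basics-geometric}(3) to each canonical functor $\cK_i\to\cK$, whose induced map on spectra sends $\cP\mapsto\cP_i$; this yields the ``lower bound'' $d\bigm\vert d_i$ for every~$i$.

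I would then split into two cases. If $d=0$, then $\cP$ is not $d'$-periodic for any $d'>0$; by \ref{it:the}$\Leftrightarrow$\ref{it:some} of \Cref{sta:period-limit-conditions} (applied to each such~$d'$) no $\cP_i$ is either, hence $d_i=0$ for every~$i$ and the sequence is constantly~$0$. If $d>0$, then $\cP$ is $d$-periodic, so the implication \ref{it:the}$\Rightarrow$\ref{it:all} of the lemma furnishes some $i_0$ such that $\cP_j$ is $d$-periodic for all $j\geq i_0$, i.e.\ $d_j\bigm\vert d$; combining with $d\bigm\vert d_j$ forces $d_j=d$ for every $j\geq i_0$. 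In either case, the system $(d_i)_i$ is eventually constant with stable value~$\per_\cK(\cP)$, which proves the parenthetical claim.

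Finally, one observes that an eventually constant filtered diagram has its stable value as its colimit (by cofinality: the tail is a constant filtered diagram, whose colimit in any category is the constant value). Applied to $(\ZZ_{\geq 0},|)$ viewed as a small category, this gives the asserted identity $\per_\cK(\cP)=\varinjlim_i\per_{\cK_i}(\cP_i)$. I do not foresee any real obstacle here: the genuine content is packaged in \Cref{sta:period-limit-conditions}, and the present corollary is a bookkeeping argument built around the two divisibility relations $d\bigm\vert d_i$ (always, from monotonicity) and $d_i\bigm\vert d$ (eventually, from the lemma).
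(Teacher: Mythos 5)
Your proof is correct and follows essentially the same route as the paper: both arguments split into the cases $\per_\cK(\cP)=0$ and $\per_\cK(\cP)=d>0$ and derive everything from \Cref{sta:period-limit-conditions}, with your appeal to \Cref{sta:d-periodic-basics-geometric}(3) for the bound $d\bigm\vert d_i$ being just a repackaging of the implication \ref{it:some}$\Rightarrow$\ref{it:the} that the paper uses to rule out points of strictly smaller period.
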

\begin{proof}
If $\per_\cK(\cP)=0$ then, by \Cref{sta:period-limit-conditions} (the implication \ref{it:some}$\Rightarrow$\ref{it:some}), $\per_{\cK_i}(\cP_i)=0$ for all~$i$.
If $\per_\cK(\cP)=d>0$ then, again by \Cref{sta:period-limit-conditions}, the points $\cP_i$ are eventually $d$-periodic.
On the other hand, there exists no~$\cP_i$ of strictly smaller period, by the implication \ref{it:some}$\Rightarrow$\ref{it:the}.
It follows that $\per_{\cK_i}(\cP_i)$ is eventually constant with value~$d$, as claimed.
\end{proof}

\begin{rmk}
\Cref{sta:period-limit} remains true under the weaker assumptions on a diagram of tt-categories imposed in~\cite[\S\,8]{MR3892970}.
The same proof applies.
\end{rmk}

\section{First examples}
\label{sec:examples}

Here we describe periodic loci and period stratifications in some examples that are either easy to determine directly, or where we can invoke the literature.
The more involved examples are left for the subsequent sections.

\begin{exa}
\label{exa:Sp}
Let $\Sp$ denote the $\infty$-category of spectra.
The spectrum of its compact part was described in~\cite[Corollary~9.5]{balmer:sss} as an application of the thick subcategory theorem~\cite{MR1652975}.
The points $\cP(p,n)$ are indexed by primes~$p$ and $n\in\ZZ_{\geq 0}\cup\{\infty\}$ with an identification $\cP(p,0)=\cP(q,0)=:\cP(0)$ for all primes $p,q$.
This prime~$\cP(0)$ is the generic point of the spectrum and is given by the finite torsion spectra~$\cP_{\text{tor}}$.
The local category at this point,
\[
\Sp^{\omega}/{\cP_{\text{tor}}}\simeq \Perf_{\Hm\QQ},
\]
is the bounded derived category of rational vector spaces, which is patently not periodic (or it follows from \Cref{Prop:wt-structures-periodic}).
By the openness of the periodic locus we conclude that the periodic locus of~$\Sp^{\omega}$ is empty.
\end{exa}

\begin{exa}
\label{exa:SpG}
Let $G$ be a finite group and consider the $\infty$-category $\Sp_G$ of (genuine) $G$-spectra.
In~\cite{balmer-sanders:SHG-finite} it was shown that all primes in~$\Sp_G^{\omega}$ are pulled back from the primes in~$\Sp^{\omega}$ via geometric fixed-point functors $\Phi^H\colon\Sp_G\to\Sp$.
It follows from \Cref{exa:Sp} and \Cref{sta:d-periodic-basics-geometric} that the periodic locus of $\Sp_G$ is empty.
The same applies with the same argument to $G$ pro-finite~\cite{balchin2024profiniteequivariantspectratensortriangular} or compact Lie~\cite{MR4036448}.
\end{exa}

\begin{exa}
An even periodic $\EE_\infty$-ring $R$ is called \emph{regular noetherian} if $\pi_0(R)$ is.
Examples include the elliptic spectra as in \Cref{exa:even-periodic}.

It is shown in~\cite[Theorem~2.13]{MR3375530} that for regular noetherian even periodic $\EE_\infty$-rings~$R$, the comparison map $\comp\colon\Spc(\Perf_R)\to\Spch(\pi_*(R))$ is a homeomorphism.
(Note that the latter space is homeomorphic to~$\Spc(\pi_0(R))$, see \Cref{rmk:spech}.)
It follows from \Cref{sta:period-comparison} that $\Spc(\Perf_R)$ has a single period stratum corresponding to~$2\in\ZZ_{\geq 0}$.
\end{exa}

\begin{exa}
\label{exa:TMF-periods}
Let $\cM_{\el}$ denote the derived moduli stack of elliptic curves (see for example~\cite{MR2597740}).
Its ring of functions is $\mathrm{TMF}$ (\Cref{exa:TMF}) and~\cite{MR3356769} show that in fact $\QCoh_{\cM_{\el}}\simeq\Mod_{\mathrm{TMF}}$ via global sections.
There is a natural support theory for perfect complexes on~$\cM_{\el}$ valued in the underlying space~$|\cM_{\el}|$ and one of the main results in~\cite{MR3375530} is that this computes the spectrum:
\[
\Spc(\Perf_{\mathrm{TMF}})=\Spc(\Perf_{\cM_{\el}})=|\cM_{\el}|
\]
By construction, the structure sheaf $\cO_{\cM_\el}$ is locally $2$-periodic. (More precisely, its sections on any affine \'etale $\Spec(R)\to\cM_{\el}$ are even periodic.)
It follows that for all $E\in|\cM_{\el}|$,
\[
\per_{\Perf_{\mathrm{TMF}}}(E)=\per_{\Perf_{\cM_{\el}}}(E)=2.
\]
Observe that while the local periods are all~$2$, the category itself is only 576-periodic (\Cref{exa:TMF}).
\end{exa}

\begin{exa}
Let $X$ be a qcqs scheme and consider $\Perf_X$, the $\infty$-category of perfect complexes on~$X$.
The spectrum is canonically identified with the space underlying~$X$, see~\cite[Theorem~8.5]{MR2280286}.
For any affine open $\Spec(A)\cong U\subseteq X$ we have $(\Perf_X)|_U\simeq\Perf_U\simeq\Perf_{\Hm A}$ (up to idempotent completion).
Since $\Hm A$ has homotopy in degree~$0$ only we deduce that $(\Perf_X)|_U$ is not periodic.
It follows (\Cref{sta:d-periodic-spreadingout}) that $\Per(\Perf_X)=\emptyset$.
\end{exa}

\begin{exa}
\label{exa:rep}
Let $G$ be a finite group and $k$ a field.
Consider the $\infty$-category of finite-dimensional representations
\[
\rep(G;k):=\Fun(BG,\Perf_k)\simeq\Db(kG),
\]
also known as the bounded derived category of f.g.\,$kG$-modules.
By~\cite{MR1450996}, the comparison map
\[
\comp\colon \Spc(\rep(G;k))\xrightarrow{\sim}\Spch(R_{\rep(G;k)})=\Spch(\Hm^*(G;k))=:\cV_G
\]
is a homeomorphism onto the \emph{extended (cohomological) support variety}~$\cV_G$.
It follows from \Cref{sta:periodicity-comparison} that $\Per(\rep(G;k))$ identifies, under $\comp$, with $\Per(\Hm^*(G;k))$.
By \Cref{sta:graded-ring-periodic-locus}, the latter is equal to
\[
\cup_{\deg(f)>0}D(f)\subseteq\Spch(\Hm^*(G;k))
\]
which contains all points except the unique closed point corresponding to the irrelevant ideal~$\Hm^{>0}(G;k)$.
Translating back along~$\comp$ we deduce that the periodic locus of~$\rep(G;k)$ is the complement of the unique closed point (the $0$-ideal).
That is, the periodic locus is nothing but the \emph{(cohomological) support variety}~$V_G=\Proj(\Hm^*(G;k))$.
\end{exa}

\begin{exa}
\label{exa:stmod}
Following up on \Cref{exa:rep}, consider the stable module category
\[
\stmod(G;k):=\frac{\Fun(BG,\Perf_k)}{\langle kG\rangle}\simeq\frac{\Db(kG)}{\langle kG\rangle},
\]
that is, the singularity category of the ring~$kG$~\cite{MR1027750}.
As the support of~$kG$ in $\Spc(\rep(G;k))$ is the unique closed point, we deduce from the previous example that also
\[
\stmod(G;k)\simeq\rep(G;k)|_{\Per(\rep(G;k))}=\rep(G;k)|_{V_G}
\]
is precisely the `periodization' of~$\rep(G;k)$.
\end{exa}

\begin{rmk}
\label{rmk:stmod-not-periodic}
In particular, $\stmod(G;k)$ is locally periodic.
However, in the spirit of \Cref{rmk:local-condition}, in most cases $k\not\cong\Sigma^dk$ for any $d>0$ in $\stmod(G;k)$.
For example, \cite[Proposition~9.16]{MR506990} proves that if $G$ is a $p$-group which isn't cyclic nor (generalized) quaternion then $\stmod(G;k)$ is not periodic.\footnote{Also, the exceptions just mentioned can be explained by the fact that in these (and only in these) cases the tt-category~$\stmod(G;k)$ is itself local (in fact, it has a unique prime). See \Cref{exa:stmod-singleton}.}
This should be compared with earlier analogous results of Artin and Tate with integral coefficients~\cite[Theorem~11.6]{MR1731415}, related to actions without fixed points on spheres~\cite[pp.~357]{MR1731415}.
\end{rmk}

\begin{rmk}
\label{rmk:stmod-strategy}
For $\rep(G;k)$ and $\stmod(G;k)$ the next thing to ask is: how does the period stratification (of \Cref{Cons:stratification}) look like?
This should exhibit interesting connections with arguments employed in the long and rich history of determining $\Pic(\stmod(G;k))$, starting with Dade's article~\cite{MR506990} mentioned above and still ongoing today, see e.g.~\cite{MR2096798,MR2183283}.

By \Cref{sta:open-immersion-period}, we may compute all local periods in $\rep(G;k)$, and by \Cref{exa:rep} together with \Cref{sta:period-comparison}, we reduce to computing the local periods of the graded ring~$\Hm^*(G;k)$.
Then we can use \Cref{sta:graded-ring-local-period} which expresses the local period at $\gp\in V_G$ as the gcd of the degrees of all non-constant cohomology classes that don't vanish at~$\gp$.
For later use and illustration we discuss a couple of examples.
\end{rmk}

\begin{exa}
\label{exa:stmod-elab}
Let $k$ be a field of characteristic $p>0$ and $G$ an elementary abelian $p$-group of rank~$r$.
Then for each $\gp\in V_G\cong\PP^{r-1}_k$ we have:
\[
\per_\gp(\stmod(G;k))=
\begin{cases}
  2&:p\text{ odd}\\
  1&:p=2
\end{cases}
\]
Indeed, since the generators (modulo nilpotents) of $\Hm^*(G;k)$ sit in (cohomological) degrees~$2$ (if $p$ odd) and $1$ (if $p=2$), respectively, the inequality~$\leq$ is clear.
But when $p$ is odd we cannot have a local period of~$1$ by \Cref{sta:period-characteristic}.
Hence also~$\geq$.
\end{exa}

\begin{exa}
\label{exa:stmod-singleton}
Let $k$ be a field of characteristic~$2$ and let us consider the $2$-groups~$G$ for which the support variety $V_G$ consists of a single point.
That is, $G$ is either cyclic or (generalized) quaternion.
By looking at the degree of the generators in cohomology (modulo nilpotents) one deduces that
\[
\per(\stmod(G;k))=
\begin{cases}
  1&:G=C_2\\
  2&:G=C_{2^n}, n>1\\
  4&:G=Q_{4n}, n>1
\end{cases}
\]
\end{exa}

\begin{exa}
\label{exa:rep-D8}
We continue with stable module categories.
For a slightly more interesting example consider $G=D_8$ in characteristic~$2$.
Its cohomology ring is 
\[
\Hm^*(D_8;k)=\frac{k[\alpha_0,\alpha_1,\beta]}{\langle{\alpha_0\alpha_1}\rangle}
\]
with $\deg(\alpha_i)=1$ and $\deg(\beta)=2$.
Therefore the spectrum has two irreducible components (defined by the~$\alpha_i$), each of which is a projective line.
They meet in an $\FF_2$-rational point~$\langle \alpha_0,\alpha_1\rangle$.
We find, by \Cref{rmk:stmod-strategy}, that
\[
\per_{\stmod(D_8;k)}(\gp)=
\begin{cases}
  2&:\gp=\langle \alpha_0,\alpha_1\rangle\\
  1&:\gp\neq\langle \alpha_0,\alpha_1\rangle
\end{cases}
\]
\begin{figure}[H]
\centering
\includegraphics[scale=0.15]{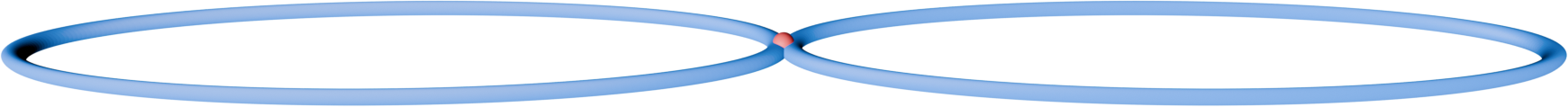}
\caption{The period stratification for $\stmod(D_8;k)$. In {\color{NavyBlue}blue} the stratum of period~$1$; in {\color{BrickRed}red} a single point with period~$2$.}
\label{fig:stmod-D8}
\end{figure}
\end{exa}

\begin{exa}
Let $G=M(11)$ be the Matthieu group of order 7920.
Its cohomology with coefficients in characteristic~$3$ is more complicated but once we quotient by the nilradical, it becomes
\[
\Hm^*(M(11);k)/\sqrt{0}=\frac{k[a,b,c]}{\langle{b^2+ac-a^3}\rangle}
\]
in which $\deg(a)=8, \deg(b)=12, \deg(c)=16$.
The associated projective scheme is a projective line~$\PP^1_k$ and its periods are~$4$ everywhere except at the (closed) point~$\langle a,b\rangle$ where the period is, naturally,~$16$.
\end{exa}

\begin{rmk}
Let $k$ be of characteristic~$p>0$.
Determining the cohomological support variety~$V_G$ as a space is made easier by Quillen's theorem~\cite{quillen:spec-cohomology} that gives $V_G=\colim_{E}V_E$ where $E\leq G$ runs through elementary abelian $p$-subgroups and conjugation in~$G$.
However, to determine the local periods one needs more precise information, namely the degrees of non-vanishing functions, and Quillen's theorem doesn't quite provide this information.
Indeed, this is in general a non-trivial task.
\end{rmk}

\section{Derived permutation modules}
\label{sec:dperm}
Throughout this section let $k$ be a field of characteristic $p>0$ and let $G$ be a finite group.
After recalling the \emph{derived category of permutation modules} $\DPerm(G;k)$, our goal here is to determine its local periods.
This we achieve partially.

\begin{rmk}
Let $\underline{k}$ denote the constant $G$-Green functor and $\Hm\underline{k}$ the associated $\EE_\infty$-algebra in $G$-spectra~$\Sp_G$ (the Bredon cohomology spectrum).
Then the derived $\infty$-category of permutation modules is
\[
\DPerm(G;k)=\Mod_{\Hm\underline{k}}(\Sp_G).
\]
Its subcategory of compact objects can be identified with bounded chain complexes of $k$-linear (finite dimensional) $G$-permutation modules, idempotent completed.
This category was studied in~\cite{MR4693637,MR4946248}.
For a homotopy theoretic treatment see \cite{fuhrmann2025modularfixedpointsequivariant}.
\end{rmk}

\begin{rmk}
Let $H\leq G$ be a $p$-subgroup and denote by $\Weyl{G}{H}=N_G(H)/H$ the Weyl group.
There is a symmetric monoidal colimit preserving \emph{modular fixed point functor}
\[
\Psi^H\colon \DPerm(G;k)\to\DPerm(\Weyl{G}{H};k)
\]
that is essentially determined by $k$-linearizing the $H$-fixed points functor on $G$-sets.
In other words, it sends a $k$-linearized $G$-set $k(X):=\Hm\underline{k}\wedge\Sigma^\infty_+X$ to $k(X^H)$.
(Such a functor can only exist for $p$-subgroups and $k$ of characteristic $p$ whence the adjective ``modular''.)
\end{rmk}

\begin{rmk}
\label{rmk:DPerm-Spc}
The main result in~\cite{MR4946248} was the computation of the spectrum of $\DPerm(G;k)^{\omega}$.
It has a stratification by locally closed subsets homeomorphic to (extended) support varieties (\Cref{exa:rep})
\[
\Spc(\DPerm(G;k)^{\omega})=\coprod_{(H)}\cV_{\Weyl{G}{H}},
\]
where $(H)$ runs through conjugacy classes of $p$-subgroups $H\leq G$.
There is one closed point~$\gm(H)$ for each conjugacy class $(H)$ of $p$-subgroups, namely the unique closed point of~$\cV_{\Weyl{G}{H}}$.

These strata are the (homeomorphic) images of the maps on spectra associated to the functors
\[
\check{\Psi}^H\colon\DPerm(G;k)^{\omega}\xrightarrow{\Psi^H}\DPerm(\Weyl{G}{H};k)^{\omega}\xrightarrow{\Upsilon}\rep(\Weyl{G}{H};k)
\]
where $\Upsilon$ sends a complex of permutation modules to itself viewed as an object in the bounded derived category of $k(\Weyl{G}{H})$-modules.
In the language of equivariant homotopy theory, this second functor is induced by Borel completion, in view of~\cite[Proposition~6.17]{mathew-naumann-noel:nilpotence-descent}.
We denote by $\cP(H,\gp)$ the point $(\check{\Psi}^H)\inv(\gp)\in\Spc(\DPerm(G;k)^{\omega})$ for $\gp\in\cV_{\Weyl{G}{H}}$.
\end{rmk}

\begin{rmk}
\label{rmk:conjecture}
We expect that the following result holds for all finite groups.
Notably this would be true if we knew a robust enough form of \'etale descent for periods (\Cref{rmk:etale-descent}).
It would presumably also become possible to prove it if we knew $\DPerm(G;k)^{\omega}$ is divisorial (\Cref{sta:ample-local-rings-general}) but this seems quite difficult to establish.
(For elementary abelian groups this had been proven in~\cite{MR4946248}.)
Recently, Miller~\cite{miller2025permutationtwistedcohomologyremixed} was able to prove a weaker divisorial-like property in the case of \emph{$p$-groups}, and this is what we'll use in the proof below.
\end{rmk}

\begin{thrm}
\label{sta:dperm-periodic-locus}
Let $G$ be a $p$-group.
The periodic locus of $\DPerm(G;k)^{\omega}$ is the complement of the closed points.
In other words,
\[
\Per(\DPerm(G;k)^{\omega})=\coprod_{(H)}V_{\Weyl{G}{H}}\ \subseteq\ \coprod_{(H)}\cV_{\Weyl{G}{H}}.
\]
\end{thrm}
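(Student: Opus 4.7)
The plan is to verify the two set-theoretic inclusions of the claimed equality separately, using the stratum decomposition of \Cref{rmk:DPerm-Spc}.

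For the inclusion $\Per(\DPerm(G;k)^{\omega})\subseteq\coprod_{(H)}V_{\Weyl{G}{H}}$, I would show that every closed point $\gm(H)$ is non-periodic. By \Cref{exa:rep}, the unique closed point of $\cV_{\Weyl{G}{H}}$ (the irrelevant ideal of $\Hm^*(\Weyl{G}{H};k)$) is non-periodic in $\rep(\Weyl{G}{H};k)$. Since this point maps to $\gm(H)$ under the spectral map induced by $\check{\Psi}^H$, the contrapositive of \Cref{sta:d-periodic-basics-geometric}(2) forces $\gm(H)$ to fail to be $d$-periodic for any $d>0$, hence to be non-periodic.

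For the reverse inclusion, I need to show that every non-closed point $\cP(H,\gp)$ (i.e., with $\gp\in V_{\Weyl{G}{H}}$) is periodic. Propagation of periodicity along $\check{\Psi}^H$ runs the wrong way here, so a direct construction is required. It will be enough to exhibit some $d>0$, an invertible object $L\in\Pic(\DPerm(G;k)^{\omega})$ that is isomorphic to $\Sigma^d\unit$ on some open neighborhood of $\cP(H,\gp)$, and a global section $s:\unit\to L$ whose cone lies in $\cP(H,\gp)$. Since open subsets of $\Spc$ are stable under generalization, $L$ then becomes isomorphic to $\Sigma^d\unit$ in the local category $\DPerm(G;k)^{\omega}/\cP(H,\gp)$, while $s$ becomes an isomorphism, yielding the desired $d$-periodicity of $\cP(H,\gp)$.

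Producing this data at every non-closed point is the main obstacle, and, as flagged in \Cref{rmk:conjecture}, it is where Miller's recent work \cite{miller2025permutationtwistedcohomologyremixed} is essential: a full divisorial structure on $\DPerm(G;k)^{\omega}$ (cf.\ \Cref{sta:ample-local-rings-general}) is currently out of reach for general $p$-groups. Concretely, given $\gp\in V_{\Weyl{G}{H}}$, \Cref{sta:graded-ring-periodic-locus} furnishes a homogeneous $f\in\Hm^{>0}(\Weyl{G}{H};k)$ with $f(\gp)\neq 0$; Miller's permutation-twisted cohomology then lifts (a suitable power of) $f$ to a global section of a line bundle in $\DPerm(G;k)^{\omega}$ that is locally a shift of the unit near $\cP(H,\gp)$ and whose cone lies in $\cP(H,\gp)$. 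Extracting this data cleanly from Miller's theorem — in particular pinning down the local shape of the line bundle near each non-closed point of each stratum — is the crux of the argument.
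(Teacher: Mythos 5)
Your first inclusion and the overall architecture of your second are exactly those of the paper: the closed points $\gm(H)$ are shown non-periodic by propagating the non-periodicity of the irrelevant ideal backwards along $\check{\Psi}^H$ via \Cref{sta:d-periodic-basics-geometric}, and each non-closed point $\cP=\cP(H,\gp)$ is handled by exhibiting a fraction $\unit\xrightarrow{f}x\xleftarrow{\iota}\Sigma^d\unit$ through a line bundle $x$, both legs of which become invertible in $\DPerm(G;k)^{\omega}/\cP$. Your reduction of periodicity at $\cP$ to the existence of such data is also sound.

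The gap is in how you propose to produce that data. You start from a homogeneous $f\in\Hm^{>0}(\Weyl{G}{H};k)$ with $f(\gp)\neq 0$ and then ask Miller's work to \emph{lift} (a power of) $f$ to a global section of a line bundle on $\DPerm(G;k)^{\omega}$. That is a surjectivity statement about the ring map $\overline{\Psi}^H\colon\Hm^{\bullet\bullet}(G;k)\to\Hm^{*}(\Weyl{G}{H};k)$, which is neither what Miller proves nor what the paper uses, and there is no reason to expect it. What Miller supplies, and what the paper exploits, is the \emph{injectivity} of the multigraded comparison map $\comp_{M_G}$, the commutativity of the square relating $\comp_{M_G}$ to $\comp$ on $\cV_{\Weyl{G}{H}}$ via $\overline{\Psi}^H$, and the trivializing sections $\iota_{x,H}\colon\Sigma^{d}\unit\to x$ that become invertible under $\check{\Psi}^H$ (these are what make each $x\in M_G$ locally a shift of $\unit$ along the $H$-stratum). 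The section $f$ is then obtained by a separation argument rather than a lifting: since $\gp$ is not the closed point, injectivity gives the strict inclusion $(\overline{\Psi}^H)\inv(\gp)=\comp_{M_G}(\cP)\subsetneq\comp_{M_G}(\gm(H))=(\overline{\Psi}^H)\inv(\Hm^{>0}(\Weyl{G}{H}))$, so \emph{some} global section $f\colon\unit\to x$ with $x\in M_G$ satisfies $\overline{\Psi}^H(f)\in\Hm^{>0}(\Weyl{G}{H})\setminus\gp$; comparing $f$ with $\iota_{x,H}$ then forces the twist degree $d$ to be positive and yields the desired fraction. You should replace the lifting step by this argument; as written, the crux of your proof rests on an unestablished (and likely false in general) surjectivity.
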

\begin{proof}
We first prove that the closed points~$\gm(H)$ (\Cref{rmk:DPerm-Spc}) are not in the periodic locus.
Recall that $\gm(H)$ is the image of the unique closed point in $\cV_{\Weyl{G}{H}}$ under the map~$\check{\psi}^H:=\Spc(\check{\Psi}^H)$.
By \Cref{exa:rep}, this closed point has local period~$0$.
It then follows from \Cref{sta:d-periodic-basics-geometric} that we have $\per_{\DPerm(G)^{\omega}}(\gm(H))=0$ as well.

We now prove that all non-closed points are periodic.
In~\cite{MR4919668}, the group $\Pic(\DPerm(G;k))$ was determined completely, shown to be finitely generated and free abelian, and in~\cite{miller2025permutationtwistedcohomologyremixed}, it is shown that all these invertibles are line bundles~\cite[Corollary~4.13]{miller2025permutationtwistedcohomologyremixed}.
Moreover, for a well-chosen submonoid $M_G\subseteq\Pic(\DPerm(G;k))$, the corresponding comparison map (\Cref{rmk:comparison-map-summary})
\[
\comp_{M_G}\colon\Spc(\DPerm(G;k)^{\omega})\to\Spch(\cR_{M_G})
\]
was shown to be injective.
(Of course, this implies that the comparison map for the full Picard group is injective too.)
Unfortunately, it is not known whether the comparison map is a homeomorphism onto its image so we cannot apply \Cref{sta:period-comparison-general} and need to work a bit harder.

It turns out that $\cR_{M_G}$ admits a tightening~$\Hm^{\bullet\bullet}(G;k)$~\cite[Definition~5.1]{miller2025permutationtwistedcohomologyremixed} in the sense of \Cref{def:tightening}, so we may identify the comparison map $\comp_{M_G}$ with the one for this multigraded ring (\Cref{thrm:agreement}).
Moreover, for every subgroup~$H\leq G$ there exists a ring map~$\overline{\Psi}^H\colon\Hm^{\bullet\bullet}(G;k)\to\Hm^{*}(\Weyl{G}{H};k)$ such that the following square commutes~\cite[Proposition~5.7]{miller2025permutationtwistedcohomologyremixed}:
\begin{equation}
\label{eq:comp-square}
\begin{tikzcd}[column sep=7em]
\Spc(\rep(\Weyl{G}{H};k))
\ar[r, "{(\check{\Psi}^H)\inv}", hook]
\ar[d, "{\comp}" swap, "\sim"]
&
\Spc(\DPerm(G;k)^{\omega})
\ar[d, "{\comp_{M_G}}", hook]
\\
\cV_{\Weyl{G}{H}}=\Spch(\Hm^*(\Weyl{G}{H};k))
\ar[r, hook, "{(\overline{\Psi}^H)\inv}"]
&
\Spch(\Hm^{\bullet\bullet}(G;k))
\end{tikzcd}
\end{equation}
The construction of $\overline{\Psi}^H$ makes use of well-chosen sections~$\iota_{x,H}\colon\Sigma^d\unit\to x$ (some~$d\in\ZZ$) for~$x\in M_G$~\cite[Theorem~4.2]{miller2025permutationtwistedcohomologyremixed}.
These become invertible after applying $\check{\Psi}^H$.
(Varying over~$H$, this shows that the elements of~$M_G$ are all base-free.)

Start with $\gp\in\cV_{\Weyl{G}{H}}$ and consider $\cP:=\cP(H,\gp)=(\check{\Psi}^H)\inv(\gp)\in\Spc(\DPerm(G)^{\omega})$.
We are interested in the case where $\gp$ is not the closed point.
Then, by the injectivity in~\eqref{eq:comp-square},
\[
(\bar{\Psi}^H)\inv(\gp)\,=\,\comp_{M_G}(\cP)\ \subsetneq\ \comp_{M_G}(\gm(H))\,=\,(\bar{\Psi}^H)\inv(\Hm^{>0}(\Weyl{G}{H})),
\]
and there must exist a global section $f\colon \unit\to x$ of a line bundle~$x\in M_G$ such that
\begin{equation}
\label{eq:PsiH(f)}
\bar{\Psi}^H(f)\in\Hm^{>0}(\Weyl{G}{H})\backslash\gp.
\end{equation}
The latter is the image of the fraction in $\DPerm(G;k)$,
\[
\unit\xrightarrow{f} x\xleftarrow{\iota_{x,H}}\Sigma^{d}\unit,
\]
under $\check{\Psi}^H$.
By~\eqref{eq:PsiH(f)} we deduce that $d>0$.
We are now able to conclude: Both  $\iota_{x,H}$ and $f$ are isomorphisms in~$\DPerm(G)/\cP$ which is therefore $d$-periodic.
\end{proof}

\begin{rmk}
Of course, the first part of the proof goes through for arbitrary groups, and shows that the periodic locus is always contained in $\coprod_{(H)}V_{\Weyl{G}{H}}$.
\end{rmk}

At least for Dedekind groups (i.e.\ all subgroups are normal) we have a more precise result.
\begin{prop}
\label{sta:DPerm-period-normal}
Let $H\leq G$ be a \emph{normal} $p$-subgroup.
Then for every $\gp\in\cV_{\Weyl{G}{H}}$,
\begin{equation}
\label{eq:DPerm-period-normal}
\per_{\DPerm(G;k)^\omega}\left(\cP(H,\gp)\right)=\per_{\rep(\Weyl{G}{H};k)}(\gp).
\end{equation}
\end{prop}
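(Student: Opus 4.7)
The plan is to sandwich $\per_{\DPerm(G;k)^\omega}(\cP(H,\gp))$ between two known quantities. Applying \Cref{sta:d-periodic-basics-geometric}(3) to the tt-functor $\check{\Psi}^H$, which sends $\gp$ to $\cP(H,\gp)$ on spectra (\Cref{rmk:DPerm-Spc}), gives the ``easy'' divisibility
\[
\per_{\rep(\Weyl{G}{H};k)}(\gp) \,\bigm|\, \per_{\DPerm(G;k)^\omega}(\cP(H,\gp)).
\]

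For the reverse divisibility, I would exploit the normality of~$H$. When $H\trianglelefteq G$, inflation along $G\twoheadrightarrow \Weyl{G}{H}$ provides a fully faithful tt-functor $\Infl\colon\DPerm(\Weyl{G}{H};k)^\omega\to\DPerm(G;k)^\omega$ that serves as a section of the modular fixed-point functor: $\Psi^H\circ\Infl\simeq\id$. Composing with $\Upsilon$ yields $\check{\Psi}^H\circ\Infl=\Upsilon$, which on spectra translates to $\Spc(\Infl)(\cP(H,\gp))=\cP(1,\gp)\in\Spc(\DPerm(\Weyl{G}{H};k)^\omega)$. Applying \Cref{sta:d-periodic-basics-geometric}(3) to $\Infl$ then gives
\[
\per_{\DPerm(G;k)^\omega}(\cP(H,\gp)) \,\bigm|\, \per_{\DPerm(\Weyl{G}{H};k)^\omega}(\cP(1,\gp)).
\]
Combined with the easy divisibility, the proposition reduces to the base case $H=1$: for any group $G'$ (take $G'=\Weyl{G}{H}$) and $\gp\in\cV_{G'}$, show that $\per_{\DPerm(G';k)^\omega}(\cP(1,\gp)) = \per_{\rep(G';k)}(\gp)$.

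For the base case, the key observation is that $\Upsilon\colon\DPerm(G';k)^\omega\to\rep(G';k)$ realizes $\rep(G';k)$ as a Verdier quotient (up to idempotent completion): $\Upsilon$ is essentially Borel completion on compacts, whose kernel is the thick tensor ideal of permutation complexes killed by it. Since this kernel is contained in $\cP(1,\gp)=\Upsilon^{-1}(\gp)$, the universal property of Verdier quotients yields an equivalence
\[
\DPerm(G';k)^\omega/\cP(1,\gp) \xrightarrow{\sim} \rep(G';k)/\gp
\]
(after idempotent completion, which preserves periods by \Cref{rmk:unitation}), giving the claimed equality.

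The main obstacle is this last step: a clean justification that $\Upsilon$ is a Verdier quotient with the stated kernel. An equivalent route is to verify that the image of $\Spc(\Upsilon)$---the stratum $\cV_{G'}$ for $H=1$---is an open subset of $\Spc(\DPerm(G';k)^\omega)$, in which case \Cref{sta:open-immersion-period} finishes the proof. This openness should follow from the closure relations in the Balmer--Gallauer spectrum, the trivial-subgroup stratum being the most generic, but a careful verification is still required.
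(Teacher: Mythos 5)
Your argument is essentially the paper's: the same two divisibilities (functoriality of periods along $\check{\Psi}^H$ via \Cref{sta:d-periodic-basics-geometric}, and the inflation section of $\Psi^H$) sandwich the period and reduce everything to the case $H=1$, which both you and the paper settle by observing that $\upsilon=\Spc(\Upsilon)$ is an open immersion and invoking \Cref{sta:open-immersion-period}. The one step you flag as still requiring verification---that the cohomological stratum is open, equivalently that $\Db(kG')$ is a finite localization of $\DPerm(G';k)^{\omega}$---is precisely what the paper supplies by citing \cite[Theorem~5.13]{MR4541331}, so your proof is complete modulo that reference.
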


\begin{cor}
\label{sta:dperm-d-periodic}
Let $d>0$ and $G$ a Dedekind group.
Then
\begin{equation}
\label{eq:dperm-d-periodic}
\pushQED{\qed} 
\Per_d(\DPerm(G;k)^{\omega})=\coprod_{(H)}\Per_d(\rep(\Weyl{G}{H};k))\ \subseteq\ \coprod_{(H)}\cV_{\Weyl{G}{H}}.\qedhere
\popQED
\end{equation}
\end{cor}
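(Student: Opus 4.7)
The plan is to combine the stratification of $\Spc(\DPerm(G;k)^\omega)$ recalled in \Cref{rmk:DPerm-Spc} with the pointwise identification of periods provided by \Cref{sta:DPerm-period-normal}. The hypothesis that $G$ is Dedekind ensures that every subgroup is normal, so in particular every $p$-subgroup $H\leq G$ is normal. Consequently \Cref{sta:DPerm-period-normal} applies to \emph{every} conjugacy class $(H)$ appearing in the stratification (each of which is in fact a singleton).

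Given this, the argument is essentially bookkeeping. Every point of $\Spc(\DPerm(G;k)^\omega)$ is uniquely of the form $\cP(H,\gp)$ for some conjugacy class $(H)$ of $p$-subgroups and some $\gp\in\cV_{\Weyl{G}{H}}$, and \Cref{sta:DPerm-period-normal} gives
\[
\per_{\DPerm(G;k)^\omega}(\cP(H,\gp))=\per_{\rep(\Weyl{G}{H};k)}(\gp).
\]
Hence $\cP(H,\gp)$ lies in $\Per_d(\DPerm(G;k)^\omega)$ if and only if $\gp$ lies in $\Per_d(\rep(\Weyl{G}{H};k))$. Intersecting the $d$-periodic locus with each stratum $\cV_{\Weyl{G}{H}}$ therefore recovers $\Per_d(\rep(\Weyl{G}{H};k))$, and taking the (set-level) disjoint union over all conjugacy classes of $p$-subgroups yields~\eqref{eq:dperm-d-periodic}.

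There is no genuine obstacle: the Dedekind hypothesis is precisely what is needed to make \Cref{sta:DPerm-period-normal} available on every stratum, and once it is, the corollary follows immediately from the stratification of \Cref{rmk:DPerm-Spc}.
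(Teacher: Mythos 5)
Your proposal is correct and matches the paper's (implicit) argument exactly: the corollary is stated with a \qed precisely because, once $G$ is Dedekind, every $p$-subgroup is normal, so \Cref{sta:DPerm-period-normal} applies on each stratum of \Cref{rmk:DPerm-Spc} and the formula is immediate bookkeeping.
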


\begin{rmk}
Recall from \Cref{exa:rep,rmk:stmod-strategy} that we also have
\[
\Per_d(\rep(\Weyl{G}{H};k)) =\Per_d(\stmod(\Weyl{G}{H};k)) \subseteq V_{\Weyl{G}{H}}\subseteq \cV_{\Weyl{G}{H}}
\]
in the expression~\eqref{eq:dperm-d-periodic}.
Of course, $\Weyl{G}{H}=G/H$ since $H$ is normal but we prefer to state it in this way because~\eqref{eq:dperm-d-periodic} might well hold for arbitrary groups~$G$.
(As in \Cref{exa:DPerm-D8} below.)
\end{rmk}

\begin{proof}[Proof of \Cref{sta:DPerm-period-normal}]
Our goal is to show the equality (abbreviating $\check{\psi}^H=(\check{\Psi}^H)\inv$)
\begin{equation}
\label{eq:goal}
\per_{\rep(G/H;k)}(\gp)=\per_{\DPerm(G;k)^{\omega}}(\check{\psi}^H(\gp)).
\end{equation}
By \Cref{sta:d-periodic-basics-geometric} we have $\leq$.
For the converse we note that $\check{\psi}^H=\psi^H\circ\upsilon$ induced by the factorization as in \Cref{rmk:DPerm-Spc}.
But $\upsilon$ is an open immersion~\cite[Theorem~5.13]{MR4541331} so that (\Cref{sta:open-immersion-period})
\[
\per_{\rep(G/H;k)}(\gp)=\per_{\DPerm(G/H;k)^{\omega}}(\upsilon(\gp)).
\]
Finally we use that $\Psi^H\colon\DPerm(G;k)\to\DPerm(G/H;k)$ has a section given by inflation~$\Infl^{G/H}_G$~\cite[Corollary~5.16]{MR4946248}.
We deduce the other inequality $\geq$ in~\eqref{eq:goal}, again by \Cref{sta:d-periodic-basics-geometric}.
\end{proof}

\begin{exa}
Let $p=2$ and $G=Q_8$ the quaternion group.
This is a Dedekind group and \Cref{sta:dperm-d-periodic} applies.
The Weyl groups are $1, C_2, C_2^{\times 2}$ and $Q_8$.
We computed all the periods for the stable module category of these groups in \Cref{exa:stmod-elab,exa:stmod-singleton} and deduce that the period stratification (on the periodic locus) looks as in \Cref{fig:DPerm-Q8}, cf.\ also~\cite[Example~8.12]{MR4946248}.
We will come back to this space in forthcoming joint work with Balmer on local equivalences in tt-geometry. \begin{figure}[H]
\centering
\includegraphics[scale=0.18]{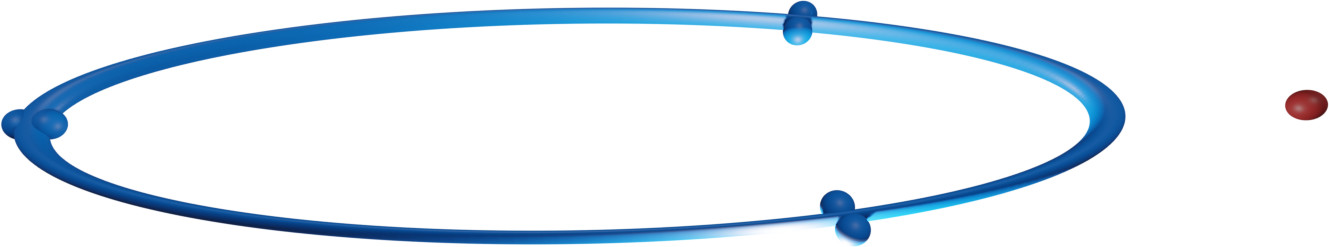}
\caption{The periodic locus for $\DPerm(Q_8;k)^{\omega}$. In {\color{NavyBlue}blue} the stratum of period~$1$: a $\PP^1_k$ with the $\ZZ/2$-rational points doubled; in {\color{BrickRed}red} a single point with period~$4$.}
\label{fig:DPerm-Q8}
\end{figure}
\end{exa}

\begin{exa}
\label{exa:DPerm-D8}
Let $p=2$ and $G=D_8$ the dihedral group.
We know by \Cref{sta:dperm-periodic-locus} that its periodic locus is the complement of the closed points.
And we may compute the local periods at $\cP(H,\gp)$ for all normal subgroups~$H\leq D_8$ via \Cref{sta:DPerm-period-normal}.
The associated Weyl groups are $1, C_2, C_2^{\times 2}$ and $D_8$ and we computed the relevant periods in \Cref{exa:stmod-elab,exa:rep-D8}.
There are two conjugacy classes of non-normal subgroups, each with Weyl group~$C_2$ and cohomological support variety a single point each.
We compute below that the local periods at these two points is~$1$ so that the period stratification (on the periodic locus) looks as in \Cref{fig:DPerm-D8}.
For more detailed explanations of how this space comes about we refer to~\cite[Example~18.17]{MR4946248}.
\begin{figure}[H]
\centering
\includegraphics[scale=0.11]{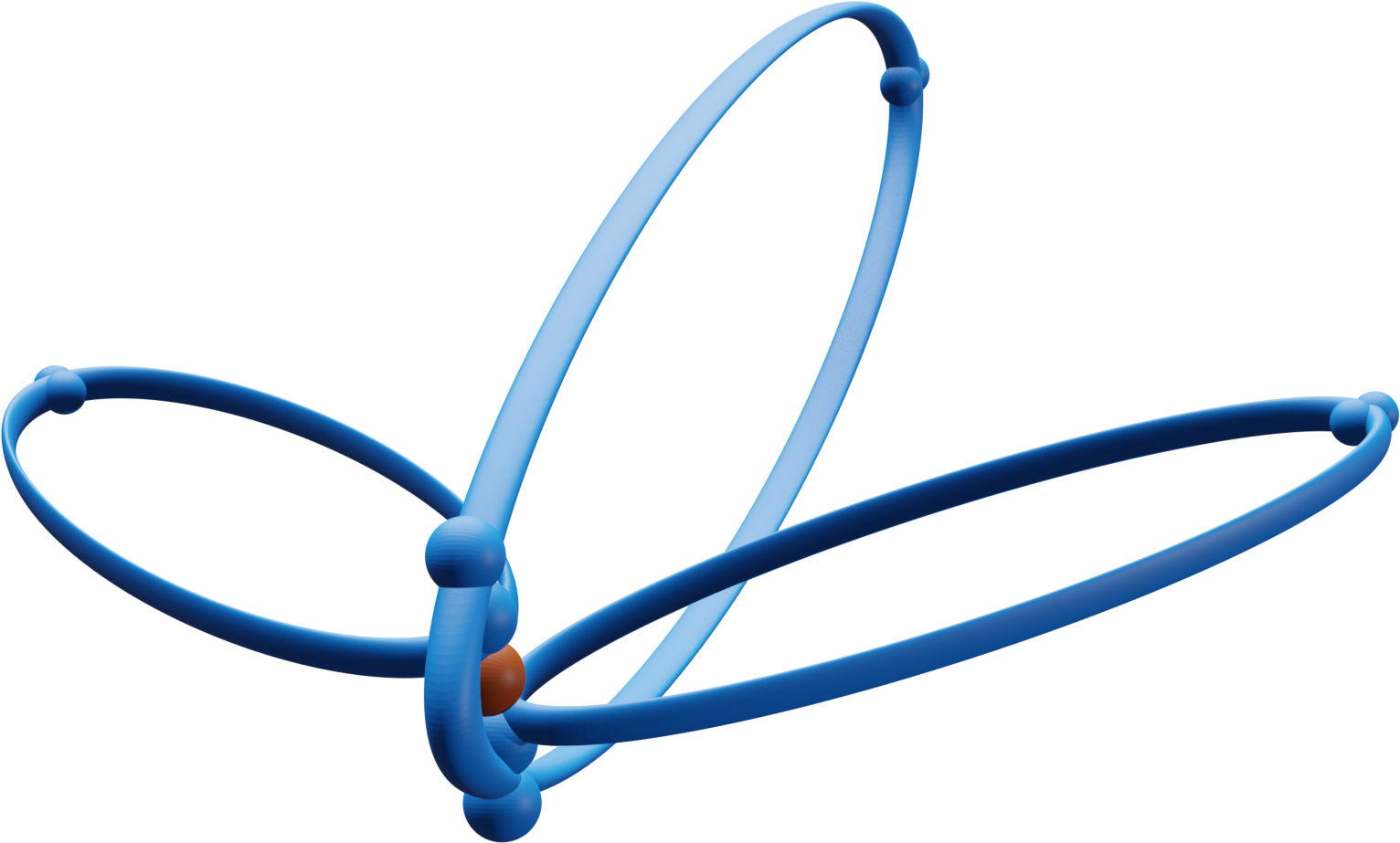}
\caption{The periodic locus for $\DPerm(D_8;k)^{\omega}$. {\color{NavyBlue}Blue}: period~$1$, {\color{BrickRed}red}: period~$2$.}
\label{fig:DPerm-D8}
\end{figure}

It remains to compute the two local periods alluded to.
Let $H\cong C_2$ be one of the non-normal subgroups with normalizer $V\cong C_2^{\times 2}$.
The corresponding stratum $V_{V/H}$ is a single point $\cP$ which we think of as one of the two doubled points at the far right in \Cref{fig:DPerm-D8}.
Recall (\Cref{exa:stmod-elab}) that the formula~\eqref{eq:DPerm-period-normal} (for normal subgroups) already suggests the period should be~$1$.
Or more geometrically, locally around the point, \Cref{fig:DPerm-D8} looks like an open in the stable module category for $V$ and so we would expect, again, the local period to be~$1$.\footnote{The fact that this by itself is \emph{not} a proof is due to the subtlety of these doubled points: It is not~$\cP$ but its double that belongs to the open subset formed by the stable module category. But once we zoom in enough and in particular remove its double, $\cP$ seems to `take the place' of its double.}

Consider now the fraction of canonical morphisms in $\DPerm(D_8;k)$ (with canonical augmentation differential):
\[
\begin{tikzcd}
k
\ar[r]
&
k(D_8/H)
\ar[d]
\\
&
k
&
k
\ar[l]
\end{tikzcd}
\]
The cone of the wrong-way morphism is (a shift of) $k(D_8/H)$ which becomes projective under $\Psi^{H}$ and therefore belongs to~$\cP$.
The cone of the first morphism becomes acyclic under $\Psi^H$ so again belongs to~$\cP$.
In summary, the fraction defines an explicit isomorphism $\unit\cong\Sigma\unit$ in $\DPerm(D_8;k)^{\omega}/\cP$.
Of course, by symmetry, the local period must be~$1$ at the other remaining point as well.
(There is a group automorphism of~$D_8$ that exchanges the two points.)
\end{exa}

\section{Motives}
\label{sec:motives}

In the last two sections we study periods in categories of motivic origin, at least at a few points that have been found so far.
In this section we discuss general features while in \Cref{sec:isotropic-points} we focus on a single but particularly large class of points.

\begin{notn}
Let $\FF$ and $k$ be fields.
\begin{enumerate}
\item We denote by $\SH(\FF)$ Morel-Voevodsky's $\infty$-category of $\Aff^1$-motivic spectra over~$\FF$~\cite{MR1813224,MR3281141}.
\item We denote by $\DM(\FF;k)$ Voevodsky's (big) $\infty$-category of motives over~$\FF$ with coefficients in~$k$.
Its subcategory of compact objects $\DM(\FF;k)^{\omega}$ identifies with ``geometric motives'', see~\cite{Voevodsky00}.
At least if $\car(\FF)=0$ or $\car(\FF)\in k^\times$, the category $\DM(\FF;k)$ can be described as modules (in~$\SH(\FF)$) over a motivic ring spectrum, bringing the setup more in line with the one in topology~\cite{MR2435654,MR3730515} and~\cite[Proposition~3.13]{MR3404379}.
\end{enumerate}
\end{notn}

\begin{exa}
\label{exa:SH-Betti}
Assume $\sigma\colon\FF\hookrightarrow\CC$ and consider the Betti realization $\Re^\sigma_B\colon\SH(\FF)\to\Sp$ that sends a smooth $\FF$-scheme to its $\CC$-points with the analytic topology~\cite[\S\,3.3.2]{MR1813224}, \cite[Theorem~1.4]{MR2045835}, \cite[\S\,A.7]{MR2597741}.
The induced map $\rho_B^\sigma\colon\Spc(\Sp^{\omega})\to\Spc(\SH(\FF)^\omega)$ is a homeomorphism onto its image and the image consists of \emph{Betti points}, by definition.
It follows from \Cref{exa:Sp,sta:d-periodic-basics-geometric} that these all have period~$0$.

Similarly, if $\alpha\colon\FF\hookrightarrow\RR$ there is a real Betti realization $\Re^\alpha_B\colon\SH(\FF)\to\Sp$ (\cite[\S\,3.3.2]{MR1813224}, \cite[Proposition~4.8]{MR3546793}).
The corresponding Betti points in $\Spc(\SH(\FF)^{\omega})$ are again of period~$0$ because of \Cref{exa:Sp}.
(There is also a $C_2$-equivariant Betti realization $\Re^\alpha_{B}\colon\SH(\FF)\to\Sp_{C_2}$~\cite[Proposition~4.8]{MR3546793}.
However, postcomposing with geometric fixed points $\Phi^{1}$ \resp{$\Phi^{C_2}$} yields the complex \resp{real} Betti realization so that this does not give rise to additional points, by \Cref{exa:SpG}.)
\end{exa}

\begin{exa}
\label{exa:DM-rational}
Assume $k$ is of characteristic zero.
Under standard motivic conjectures, there is a canonical equivalence $\DM(\FF;k)^{\omega}\simeq\Db(\mathcal{A})$ where $\mathcal{A}$ is a $k$-linear Tannakian category.
(Also, see~\cite[Theorem~4.22]{MR4971596} for a weakening of the conjectures when~$\FF$ is of characteristic zero.)
It was explained in~\cite{MR4224517} that this implies the spectrum has a single point, given by the $0$-ideal.
By \Cref{Prop:wt-structures-periodic}, we see that the period at this single point is~$0$.
\end{exa}

\begin{rmk}
It is because of \Cref{exa:DM-rational} that in the sequel we will restrict in $\DM(\FF;k)$ to coefficients~$k$ of positive characteristic.
From the tt-geometric point of view that's where the fun happens.
\end{rmk}

\begin{exa}
\label{exa:dmet}
Let $\ell$ be a prime invertible in~$\FF$ and consider $\DMet(\FF;\ZZ/\ell)$, the \'etale version of Voevodsky's category, and its full subcategory $\DMet(\FF;\ZZ/\ell)^\vee$ of rigid objects.
It was shown in~\cite[Proposition~4.4]{MR4224517} that the comparison map with respect to the Tate motive $\tilde{\mathbb{G}}_m=\unit(1)[1]$ is a homeomorphism
\[
\comp_{\tilde{\mathbb{G}}_m}\colon\Spc(\DMet(\FF;\ZZ/\ell)^\vee)\xrightarrow{\sim}\Spch(K^M(\FF)/\ell),
\]
using Suslin-Voevodsky rigidity and the solution to the Bloch--Kato conjecture.
The spectrum of mod-$\ell$ Milnor K-theory is known to have a unique closed point~$\langle[\FF^\times]\rangle$ and only additional points $\langle[P_\alpha]\rangle$ when $\ell=2$ and $\FF$ admits orderings~$\alpha$.
(Here, $P_\alpha$ denotes the positive cone of $\FF$ with respect to the ordering~$\alpha$.)
We refer to~\cite[Remark~4.2]{MR4224517} for a description of the space.
\end{exa}

\begin{exa}
\label{exa:etale-realization}
Continuing with the notation of \Cref{exa:dmet}, a point in the image of
\[
\upsilon_{\et}\colon \Spch(K^M(\FF)/\ell)\cong\Spc(\DMet(\FF;\ZZ/\ell)^\vee)\to\Spc(\DM(\FF;\ZZ/\ell)^{\omega})
\]
induced by \'etale sheafification is called an \emph{\'etale point}.
(This is sometimes just an open immersion, see~\cite{MR2182375}, slighly improved in~\cite[Theorem~C.1]{MR4010428}.)
We denote these prime ideals by $\cP_{\et}(\ell)\in\Spc(\DM(\FF;\ZZ/\ell)^{\omega})$ and $\cP_{\et}(2, \alpha)\in\Spc(\DM(\FF;\ZZ/2)^{\omega})$, respectively.

We now claim that
\begin{align*}
\per(\cP_{\et}(\ell))&=0,\\ \per(\cP_{\et}(2, \alpha))&=1.
\end{align*}
Compare with~\cite[Remark~4.6]{MR4224517} for a more useful description of these prime ideals which we'll use in the sequel.
The prime $\cP_{\et}(\ell)$ is the kernel of the \'etale realization functor (on compact objects)
\[
\DM(\FF;\ZZ/\ell)\to \DMet(\FF;\ZZ/\ell)\to \DMet(\bar{\FF};\ZZ/\ell)\simeq \Mod_{\Hm\ZZ/\ell}
\]
and hence its period is~$0$, by \Cref{sta:d-periodic-basics}.

For the remaining points, assume $\ell=2$ and fix an ordering~$\alpha$ of~$\FF$ with positive cone $P_\alpha\subseteq\FF^\times$ and with completion $\FF_\alpha$.
The \'etale sheafification $\DM(\FF;\ZZ/2)\to\DMet(\FF;\ZZ/2)$ inverts the motivic Bott element $\tau\colon \unit\to\unit(1)$ determined by $-1$ (a primitive square root of 1).
And if $\rho\colon\unit\to\unit(1)[1]$ corresponds to the point $-1\in\mathbb{G}_m$ then its image does not belong to $P_\alpha$ so $\rho$ is invertible at $\cP_{\et}(2,\alpha)\in\Spc(\DM(\FF;\ZZ/2)^{\omega})$.
Together we see that $\per\left(\cP_{\et}(2,\alpha)\right)=1$.
\end{exa}

\begin{rmk}
One can similarly define \'etale points in $\Spc(\SH(\FF)^\omega)$ as those that arise through \'etale sheafification.
We leave their study to a future work.
\end{rmk}

\begin{exa}
\label{exa:DAM}
Let $k$ be a field of characteristic~$p>0$ and consider Artin motives $\DAM(\FF;k)$, the localizing subcategory of Voevodsky motives generated by $\Spec(\FF')$ where $\FF'/\FF$ is finite and separable.
It is well-known (\cite[Proposition~3.4.1]{Voevodsky00}, or \cite[Corollary~710]{MR4693637} without the perfectness assumption) that
\[
\DAM(\FF;k)\simeq\DPerm(G_{\FF};k)
\]
where $G_{\FF}$ is the absolute Galois group of~$\FF$.
One has
\[
\DPerm(G_{\FF};k)=\colim_N\DPerm(G_\FF/N;k)
\]
where $N\leq G_{\FF}$ runs through open normal subgroups (so that $G_{\FF}/N$ is a finite group) and the colimit is taken in stable presentably symmetric monoidal $\infty$-categories.
Using such a (co)limit argument, it was shown in~\cite[Corollary~3.10]{MR4866349} that the stratification of \Cref{rmk:DPerm-Spc} also holds in this more general situation:
\begin{equation}
\label{eq:DAM-strata}
\Spc(\DPerm(G_{\FF};k)^{\omega})=\coprod_{(H)}\cV_{\Weyl{G_{\FF}}{H}},
\end{equation}
where $H$ runs through conjugacy classes of (closed)\footnote{Here and in the sequel all subgroups of profinite groups are implicitly assumed closed.} pro-$p$-subgroups.
We denote the points by $\cP(H,\gp)$ for $\gp\in\cV_{\Weyl{G_{\FF}}{H}}$.
Using \Cref{sta:period-limit} we have
\begin{equation}
\label{eq:period-DAM}
\per_{\DAM(\FF;k)^{\omega}}\left(\cP(H,\gp)\right)=\colim_N\per_{\DPerm(G_\FF/N:k)^{\omega}}\left(\cP(\bar{H},\bar{\gp})\right)
\end{equation}
where $\bar{H}=HN/N$ and $\bar{\gp}\in\cV_{\Weyl{G_{\FF}}{N}}$ the image of~$\gp$.
From this we deduce:
\begin{itemize}
\item For every closed point~$\gm(H)=\cP(H,\Hm^{>0})$ associated to a pro-$p$-subgroup~$H\leq G_{\FF}$ one has $\per(\gm(H))=0$;
\item Conjecturally, these are the only non-periodic points, see \Cref{rmk:conjecture}.
Moreover, we know this holds whenever $G_\FF$ is either a pro-$p$-group (\Cref{sta:dperm-periodic-locus}) or if every subgroup is normal (\Cref{sta:dperm-d-periodic}).
\end{itemize}
\end{exa}

\begin{exa}
If $G_{\FF}$ is abelian (or pro-Dedekind) then we deduce from \Cref{sta:dperm-d-periodic} and~\eqref{eq:period-DAM} that
\begin{equation}
\label{eq:period-DAM-abelian}
\per_{\DAM(\FF;k)^{\omega}}\left(\cP(H,\gp)\right)=\colim_N\per_{\rep(\Weyl{(G_{\FF}/N)}{\bar{H}};k)}(\bar{\gp})=\per_{\rep(\Weyl{G_\FF}{H};k)}(\gp).
\end{equation}
For example, for $\FF=\FF_q$ a finite field (possibly $p\bigm\vert q$) we obtain the following period stratification (cf.\ \cite[Theorem~4.6]{MR4866349}):
\begin{align*}
\begin{tikzpicture}[inner sep=0pt, outer sep=0pt]
\node[main node] at (-6,2) (M0) {};
\node[split node] at (-5.5,1.4) (P1) {};
\node[main node] at (-5,2) (M1) {};
\node[lower node] at (-4.5,1.4) (P2) {};
\node[main node] at (-4,2) (M2) {};
\node[lower node] at (-3.5,1.4) (P3) {};
\node[main node] at (-3,2) (M3) {};
\node at (-2.5,1.4) (P4) {};
\node at (-2,1.6) {\ldots};
\node[main node] at (-1,2) (MI) {};
\path[line width=0.01pt]
(M0) edge (P1)
(P1) edge (M1)
(M1) edge (P2)
(P2) edge (M2)
(M2) edge (P3)
(P3) edge (M3)
(M3) edge (P4);
\end{tikzpicture}
\end{align*}
Here, the color $\begin{tikzpicture}
\node[main node] {};
\end{tikzpicture}$ (resp.\ $
\begin{tikzpicture}
\node[lower node] {};
\end{tikzpicture}$) indicates period~$0$ (resp.\ $2$), while the single point colored $\begin{tikzpicture}
\node[split node] {};
\end{tikzpicture}$ indicates period~$1$ if $p=2$ \resp{$2$ if $p>2$}.

To verify this note that the absolute Galois group identifies with the profinite integers~$\hat{\ZZ}=A\times \hat{\ZZ}_p$ where $A$ is the prime-to-$p$ part.
Because of this decomposition one may replace $\hat{\ZZ}$ by $\hat{\ZZ}_p$ in~\eqref{eq:period-DAM-abelian}.
Then all the finite quotients are cylic groups $C_{p^n}$ whose generators in cohomology (modulo nilpotents) live in degree~$1$ \resp{$2$} if $p^n=2$ \resp{$p^n>2$}.
\end{exa}

\begin{exa}
We now restrict to the localizing subcategory $\DATM(\RR;\ZZ/2)$ of Voevodsky motives over the real numbers (or any real-closed field), generated by the Artin-Tate motives $\Spec(K)(n)$ where $K\in\{\RR,\mathbb{C}\}$ and $n\in \mathbb{Z}$.
Its spectrum was computed in~\cite{MR4445121} as the following six-point space with specialization relations going upwards:
\begin{align}
\label{eq:ratm}
\Spc(\DATM(\RR;\ZZ/2)^{\omega})\quad =\qquad
\vcenter{
\hbox{\begin{tikzpicture}[inner sep=0pt, outer sep=0pt]
\node[lower node] at (-5,1.4) (S) {};
\node[main node] at (-4,2) (M) {};
\node[main node] at (-6,2) (W) {};
\node[main node] at (-6,1) (R) {};
\node[lower node] at (-5,0.4) (A) {};
\node[main node] at (-4,1) (E) {};
\path[line width=0.01pt]
(S) edge (M)
(S) edge (W)
(R) edge (W)
(E) edge (M)
(A) edge (S)
(A) edge (R)
(A) edge (E);
\end{tikzpicture}}
}
\end{align}
We indicate the period stratification in colors, with $
\begin{tikzpicture}
\node[main node] {};
\end{tikzpicture}$ (resp.\ $
\begin{tikzpicture}
\node[lower node] {};
\end{tikzpicture}$) having period~$0$ (resp.~$1$).

The top $V$-shaped layer is detected by a functor $\DATM(\RR;\ZZ/2)\to \DAM(\RR;\ZZ/2)$ hence we know by \Cref{exa:DAM} and \Cref{sta:d-periodic-basics-geometric} that the two closed points have period~$0$.
On the other hand, the middle point is generated by $\Spec(\mathbb{C})$ and the complex $\unit\to\Spec(\mathbb{C})\to\unit$, see~\cite[Theorem~7.9]{MR4445121}.
This immediately implies that this point has period~$1$.

A surprising fact~\cite[Theorem~6.18]{MR4445121} is that the category over the bottom $V$-shaped layer (cf.\ \Cref{notn:localization}) is another copy of $\DAM(\RR;\ZZ/2)$.
We conclude with \Cref{sta:open-immersion-period} that the periods in that bottom layer coincide with those of the latter category.
Because of \Cref{sta:DPerm-period-normal,exa:stmod-elab} we see that the pattern from the top layer repeats.
\end{exa}

\begin{rmk}
\label{rmk:etale-vs-artin-tate}
Under the canonical inclusion $\DAM(\FF;\ZZ/\ell)\hookrightarrow\DM(\FF;\ZZ/\ell)$ the \'etale points restrict precisely to the `cohomological' stratum in~\eqref{eq:DAM-strata}, that is, the following square commutes:
\[
\begin{tikzcd}
\Spc(\DM(\FF;\ZZ/\ell)^{\omega})
\ar[d, "\pi"]
&
\Spc(\DMet(\FF;\ZZ/\ell)^\vee)
\ar[d, "\sim"]
\ar[l, hook']
\\
\Spc(\DAM(\FF;\ZZ/\ell)^{\omega})
\ar[d, "="]
&
\Spc(\rep(G_\FF;\ZZ/\ell))
\ar[d, "="]
\ar[l, hook']
\\
\coprod_{(H)}\cV_{\Weyl{G_{\FF}}{H}}&
\cV_{\Weyl{G_\FF}{1}}
\ar[l, hook']
\end{tikzcd}
\]
This follows from~\cite[Corollary~7.15]{MR4693637}.

Similarly, under the canonical inclusion $\DATM(\RR;\ZZ/2)\hookrightarrow\DM(\RR;\ZZ/2)$, the two \'etale points are sent to the bottom middle and bottom right (or left, by symmetry) point in~\eqref{eq:ratm}, see~\cite[Equation~(10.8)]{MR4445121}.
\end{rmk}
\section{Isotropic points}
\label{sec:isotropic-points}

In this section we recall the construction of isotropic points due to Du-Vishik~\cite{MR4905541} and Vishik~\cite{MR4768634}, respectively.
We'll see that (essentially by construction) their periods are all~$0$ and, using this, we relate them to the other points considered in the preceding section.

\begin{notn}
We fix a field~$\FF$ of characteristic zero and a prime~$p$.
\end{notn}

\begin{exa}
\label{exa:isotropic-points}
Let us recall the \emph{isotropic points} in $\Spc(\DM(\FF;\ZZ/p)^{\omega})$ from~\cite{MR4768634}.
For this we start with a field extension $\EE/\FF$.
The associated isotropic prime $\iso{\EE}$ is the kernel of the composite (on compact objects)
\[
\DM(\FF;\ZZ/p)\xrightarrow{\otimes_{\FF}\EE(\PP^\infty)}\DM(\EE(\PP^\infty);\ZZ/p)\to\DM(\EE(\PP^\infty)/\EE(\PP^\infty);\ZZ/p)
\]
where the second functor is a localization, killing all $p$-anisotropic varieties.\footnote{Recall that a (locally of) finite type $\FF$-scheme is $p$-anisotropic if all its closed points are of $p$-power degree over~$\FF$.}
(The difficult part is showing that the kernel is prime.)

We now observe that the local isotropic category~$\DM(\EE(\PP^\infty)/\EE(\PP^\infty);\ZZ/p)$ admits a weight structure \cite[Proposition~5.7]{MR4768634} so that $\per_{\DM(\FF;\ZZ/p)^{\omega}}(\iso{\EE})=0$, by \Cref{Prop:wt-structures-periodic}.
It is also known that the isotropic points are all closed and hence there are no specialization relations among them~\cite{vishik2025balmerspectrumvoevodskymotives}.
\end{exa}

\begin{rmk}
\label{rmk:isotropic-equivalence-relation}
By~\cite[Theorem~1.3]{MR4768634}, the equality $\iso{\EE}=\iso{\EE'}$ holds iff $\EE\psim\EE'$ where the latter means equivalence with respect to the following partial order.
If $\EE=\FF(X)$ and $\EE'=\FF(X')$ for smooth projective varieties $X,X'/\FF$ then $\EE\pgeq\EE'$ iff there is a correspondence $X\dashrightarrow X'$ of degree prime-to-$p$.
(Equivalently, the pushforward on Chow groups modulo~$p$ induced by $X\times_{\FF}X'\to X$ is surjective.)
For arbitrary extensions write $\EE=\colim \EE_\alpha$ and $\EE'=\colim\EE'_\beta$ as filtered unions with $\EE_\alpha$, $\EE'_\beta$ finitely generated.
Then $\EE\pgeq \EE'$ if for every~$\beta$ there exists~$\alpha$ such that $\EE_\alpha\pgeq\EE'_\beta$.
\end{rmk}

\begin{exa}
Let us recall the \emph{isotropic points} in $\Spc(\SH(\FF)^{\omega})$ from~\cite{MR4905541}.
Let $\EE/\FF$ be a field extension.
The isotropic point~$\iso{\EE,p,n}$ for $p$ a prime and $n\in\ZZ_{\geq 0}\cup\{\infty\}$ is the kernel of the composite (on compact objects)
\[
\SH(\FF)\xrightarrow{\otimes_{\FF}\EE(\PP^\infty)}\SH(\EE(\PP^\infty))\to\SH(\EE(\PP^\infty)/\EE(\PP^\infty))
\]
where the second functor is a certain finite localization related to Morava $K$-theory at height~$n$ and at the prime~$p$.
We won't need to know the exact details but only note that by~\cite[\S\,7]{MR4905541}, the compact local isotropic category $\SH(\EE(\PP^\infty)/\EE(\PP^\infty))^{\omega}$ maps (conservatively) to an $\infty$-category with weight structure (specifically, a homotopy category of numerical Morava Chow motives).
By \Cref{sta:d-periodic-basics,Prop:wt-structures-periodic}, we deduce that $\per(\iso{\EE,p,n})=0$. 
\end{exa}

\begin{rmk}
Similarly to \Cref{rmk:isotropic-equivalence-relation} one can characterize when two isotropic points in~$\SH(\FF)$ coincide~\cite[Theorem~7.6]{MR4905541}.
In both $\SH(\FF)$ and $\DM(\FF;\ZZ/p)$, there is typically a large number of isotropic points.
We refer to~\cite[Example~8.9]{MR4905541} and \Cref{rmk:DM-many-isotropic-points} below.

For $n=\infty$, the isotropic point $\iso{\EE,p,\infty}$ is the preimage of $\iso{\EE}$ under motivic cohomology
\[
\SH(\FF)\xrightarrow{\Hm\ZZ/p}\DM(\FF;\ZZ/p),
\]
see~\cite[Theorem~7.6]{MR4905541}.

Finally, we observe that isotropic points are distinct from the \'etale and Betti points introduced in the preceding section.
Indeed, while $\tau$ is an isomorphism at the latter two, it vanishes at isotropic points, cf.~\cite[Remark~8.10]{MR4905541}.\footnote{Here, $\tau$ is as in \Cref{exa:etale-realization} determined by a primitive $p$-th root of unity if that exists. Otherwise it is defined after adjoining such, see also the proof of~\cite[Proposition~4.4]{MR4224517}.}
\end{rmk}

\begin{rmk}
Consider the inclusion $\DAM(\FF;\ZZ/p)\hookrightarrow\DM(\FF;\ZZ/p)$ and the induced map on spectra:
\begin{equation}
\label{eq:pi}
\pi\colon\Spc(\DM(\FF;\ZZ/p)^{\omega})\to\Spc(\DAM(\FF;\ZZ/p)^{\omega})
\end{equation}
Since we know all the points of the target (\Cref{exa:DAM}) we should be able to tell where the isotropic points map to.
We can do this just looking at their periods:
\end{rmk}

\begin{prop}
\label{sta:isotropic-closed-image}
The map~$\pi$ sends all isotropic points to closed points.
\end{prop}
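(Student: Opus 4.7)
The plan is to run the two-step argument sketched in the introduction: establish (a) every isotropic point in $\Spc(\DM(\FF;\ZZ/p)^\omega)$ has period $0$, and (b) every non-closed point of $\Spc(\DAM(\FF;\ZZ/p)^\omega)$ is periodic; then combine these with propagation of periodicity along the tt-functor $\iota\colon\DAM(\FF;\ZZ/p)\hookrightarrow\DM(\FF;\ZZ/p)$.

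For (a) I would argue exactly as in \Cref{exa:isotropic-points}: the local isotropic category $\DM(\EE(\PP^\infty)/\EE(\PP^\infty);\ZZ/p)$ admits a weight structure by \cite[Proposition~5.7]{MR4768634}, hence is non-periodic by \Cref{Prop:wt-structures-periodic}; the canonical tt-functor from $\DM(\FF;\ZZ/p)^\omega/\iso{\EE}$ into this local isotropic category together with \Cref{sta:d-periodic-basics}(ii) then forces the former to be non-periodic as well, i.e.\ $\per(\iso{\EE})=0$.

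For (b), the stratification recalled in \Cref{exa:DAM} writes any point of $\Spc(\DAM(\FF;\ZZ/p)^\omega)$ as $\cP(H,\gp)$ with $H\leq G_\FF$ a pro-$p$-subgroup and $\gp\in\cV_{\Weyl{G_\FF}{H}}$, the closed points being exactly the $\gm(H)=\cP(H,\Hm^{>0})$. For non-closed $\gp$, the filtered-colimit formula~\eqref{eq:period-DAM} together with \Cref{sta:period-limit} reduces the task to finding, at some finite $p$-group quotient, a positive-degree global section of a line bundle that remains invertible at the image of $(H,\gp)$; this is what Miller's ample-family construction provides via \Cref{sta:dperm-periodic-locus}. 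Granting (b), the image $\pi(\cP)$ of an isotropic point must satisfy $\per(\pi(\cP))=0$ (by (a) and the contrapositive of \Cref{sta:d-periodic-basics-geometric}(ii) applied to $\iota$), and hence must be a closed point $\gm(H)$.

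The main obstacle is upgrading Miller's theorem from finite $p$-groups to the profinite Galois setting required by $\DAM(\FF;\ZZ/p)$. Concretely, one needs to arrange a cofinal system of open normal subgroups $N\leq G_\FF$ along which non-closedness of $\gp$ in $\cV_{\Weyl{G_\FF}{H}}$ descends to non-closedness of the image $\bar\gp$ in $\cV_{\Weyl{G_\FF/N}{\bar H}}$, with $\bar H=HN/N$ a $p$-subgroup of the finite quotient, so that the positive-degree line-bundle section coming from \Cref{sta:dperm-periodic-locus} yields a concrete witness to periodicity of $\cP(H,\gp)$ in the colimit.
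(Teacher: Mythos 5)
Your step (a) is exactly the paper's (weight structure on the local isotropic category, \Cref{Prop:wt-structures-periodic}, propagation via \Cref{sta:d-periodic-basics}), and your overall strategy --- period $0$ cannot map to a periodic point --- is the intended one. The gap is in step (b). The assertion that every non-closed point of $\Spc(\DAM(\FF;\ZZ/p)^\omega)$ is periodic is precisely what the paper states as \emph{conjectural} for general $\FF$ (\Cref{exa:DAM}, \Cref{rmk:conjecture}); it is only proved when $G_\FF$ is pro-$p$ (or pro-Dedekind). Your plan to extract it from \Cref{sta:dperm-periodic-locus} via the filtered-colimit formula~\eqref{eq:period-DAM} does not go through: that theorem requires the \emph{ambient} group to be a finite $p$-group, whereas the finite quotients $G_\FF/N$ of a general absolute Galois group are arbitrary finite groups. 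It is not enough that $\bar H=HN/N$ be a $p$-subgroup --- Miller's ample-family input is only available for $\DPerm(G;k)$ with $G$ itself a $p$-group, so no choice of cofinal system of open normal subgroups rescues the argument for general $\FF$.

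The paper avoids proving (b) in general by an extra reduction specific to isotropic points: choose a $p$-special closure $\EE'$ of $\EE$ containing a $p$-special closure $\FF'$ of $\FF$, observe that $\EE\psim\EE'$ (every finite intermediate extension has degree prime to $p$), so that $\iso{\EE}$ is the image of $\iso{\EE'}$ under base change $\Spc(\DM(\FF')^\omega)\to\Spc(\DM(\FF)^\omega)$; since the corresponding map on Artin-motive spectra preserves closed points, one may assume $\FF$ is $p$-special, i.e.\ $G_\FF$ is a pro-$p$-group, and there your step (b) really is available via \Cref{sta:dperm-periodic-locus} and the colimit argument of \Cref{exa:DAM}. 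You correctly sensed that ``the main obstacle is upgrading Miller's theorem to the profinite Galois setting,'' but the resolution is not to upgrade the theorem --- it is to move the isotropic point to a field where the theorem already applies.
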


\begin{rmk}
\label{rmk:isotropic-closed-image-prop}
We know by \Cref{exa:isotropic-points} that the period of isotropic points is~$0$.
If we also knew that the periodic locus of $\DAM(\FF;\ZZ/p)^{\omega}$ is the complement of the closed points, as conjectured (\Cref{exa:DAM}), then the statement would follow directly from \Cref{sta:d-periodic-basics-geometric}.
Unfortunately, we can currently only prove this for $\Gamma=\Gal(\bar{\FF}/\FF)$ a pro-p-group which is why the proof of \Cref{sta:isotropic-closed-image} includes one additional (reduction) step.
Nevertheless, we hope the reader appreciates the simplicity of the argument in principle.
\end{rmk}

\begin{rmk}
\label{rmk:p-special}
Recall that a field is \emph{$p$-special} if every finite extension is of $p$-power degree.
An algebraic field extension $\EE_1\subseteq\EE_2$ that is $p$-special and minimal with respect to this property is a \emph{$p$-special closure} of~$\EE_1$.
This implies that all finite extensions of~$\EE_1$ contained in~$\EE_2$ are of degree prime to~$p$.
Such a $p$-special closure always exists: it corresponds to the choice of a pro-$p$-Sylow subgroup of an absolute Galois group.
\end{rmk}

\begin{proof}[Proof of \Cref{sta:isotropic-closed-image}]
Let $\EE/\FF$ be a field extension with associated isotropic point $\iso{\EE}\in\Spc(\DM(\FF;\ZZ/p)^{\omega})$.
Choose a $p$-special closure $\EE'$ of~$\EE$ and let $\FF''\subseteq\EE'$ be the algebraic closure of $\FF$ inside~$\EE'$.
It is easy to verify that $\FF''$ is $p$-special hence it contains a $p$-special closure $\FF'$ of~$\FF$.
We claim that $\EE\sim_{\FF}\EE'$.
But this follows immediately from the fact that every finite intermediate field extension $\EE\leq \KK\leq \EE'$ is of degree prime to~$p$.

We have a commutative square
\[
\begin{tikzcd}
\DM(\FF)
\ar[r]
&
\DM(\FF')
\\
\DAM(\FF)
\ar[u]
\ar[r]
&
\DAM(\FF')
\ar[u]
\end{tikzcd}
\]
with horizontal arrows given by base change $\otimes_\FF\FF'$ and vertical arrows the canonical inclusions.
The claim above exactly shows that $\Spc(\otimes_\FF\FF')(\iso{\EE'})=\iso{\EE}$.
Moreover, the map on spectra induced by the bottom horizontal arrow preserves closed points (see the proof of \cite[Lemma~11.9]{MR4946248}).
It therefore suffices to prove the statement when $\FF$ is $p$-special.
This is \Cref{rmk:isotropic-closed-image-prop}.
\end{proof}

\begin{rmk}
\label{rmk:armer-tor}
Let $\cK$ be a rigid tt-category and $F\colon \cK\to\cL$ a faithful tt-functor.
By \cite[Corollary~1.8]{MR3829735}, the associated map on spectra $f\colon\Spc(\cL)\to\Spc(\cK)$ is surjective.
In particular, the map~$\pi$ of~\eqref{eq:pi} is surjective.
We just saw that all isotropic points map to only the closed points, which means that there must be (many) additional points in~$\Spc(\DM(\FF;\ZZ/p)^\omega)$.
In fact, we know that there must be (many) additional points lying in the periodic locus because of \Cref{exa:DAM}.
To our knowledge, the only such points known (for $\FF$ of any characteristic) are \'etale points as described in \Cref{exa:etale-realization}.
We note that they only exist if $\FF$ has orderings (in particular, is of characteristic zero) and $p=2$.
And even then they typically cover only a tiny bit of the spectrum, see \Cref{rmk:etale-vs-artin-tate}.

In other words, we seem to have barely scratched the surface of $\Spc(\DM(\FF;\ZZ/p)^\omega)$.
\end{rmk}

\begin{rmk}
\label{rmk:DM-many-isotropic-points}
As mentioned in \Cref{rmk:isotropic-equivalence-relation}, two isotropic points in~$\DM(\FF;\ZZ/p)$ coincide iff the corresponding fields are in relation~$\psim$.
For example, in~\cite[Example~5.14]{MR4768634} it is shown that for $p=2$ and $\FF=\RR$, there are $2^{\mathfrak{c}}$ equivalence classes~$(\EE)_{\psim}$ (where $\mathfrak{c}$ denotes the cardinality of the continuum).
Both as an illustration and for later use we now describe more explicitly this equivalence relation for \emph{algebraic} extensions.
\end{rmk}

\begin{notn}
Let $G$ be a profinite group and $H,H'\leq G$ two subgroups.
We define $H\pleq H'$ iff $H(p)\leq_G H'$ (subconjugate) where $H(p)\leq H$ is a pro-$p$-Sylow subgroup.
We also define $H\psim H'$ if $H\pleq H'$ and $H'\pleq H$.
\end{notn}

\begin{lem}
Let $\EE,\EE'/\FF$ be algebraic extensions corresponding to closed subgroups $H,H'\subseteq G_{\FF}$.
Then:
\begin{equation}
\label{eq:EpE'}
\EE\pgeq\EE'\qquad\Leftrightarrow\qquad H\pleq H'
\end{equation}
\end{lem}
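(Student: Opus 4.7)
The plan is to combine a Galois-theoretic translation at finite level with a compactness argument in the space of closed subgroups of~$G_\FF$. Write $\EE=\bigcup_\alpha \EE_\alpha$ and $\EE'=\bigcup_\beta \EE'_\beta$ as filtered unions of \emph{finite} subextensions (for algebraic extensions, finitely generated coincides with finite). Then $H=\bigcap_\alpha H_\alpha$ and $H'=\bigcap_\beta H'_\beta$ with $H_\alpha,H'_\beta\leq G_\FF$ open. By the definition in \Cref{rmk:isotropic-equivalence-relation}, $\EE\pgeq \EE'$ iff for every $\beta$ there exists $\alpha$ with $\EE_\alpha \pgeq \EE'_\beta$, so the task reduces to the finite-level equivalence plus a compatibility argument at the limit.

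For finite subextensions, the standard Galois decomposition reads $\EE_\alpha\otimes_\FF \EE'_\beta\cong \prod_g \KK_g$ with residue fields indexed by double cosets $g\in H_\alpha\backslash G_\FF/H'_\beta$ and $[\KK_g:\EE_\alpha]=[H_\alpha: H_\alpha\cap gH'_\beta g\inv]$. The pushforward on $0$-dimensional Chow groups modulo~$p$ is surjective iff some $[\KK_g:\EE_\alpha]$ is prime to~$p$, iff $H_\alpha\cap gH'_\beta g\inv$ contains a pro-$p$-Sylow of~$H_\alpha$ for some $g\in G_\FF$, iff $H_\alpha(p)\leq_{G_\FF} H'_\beta$, i.e.~$H_\alpha\pleq H'_\beta$. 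Hence~\eqref{eq:EpE'} holds at each finite level.

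For the colimit passage, I would construct a coherent system of pro-$p$-Sylow subgroups $H_\alpha(p)\leq H_\alpha$ with $H_{\alpha'}(p)\leq H_\alpha(p)$ whenever $H_{\alpha'}\leq H_\alpha$, and with $\bigcap_\alpha H_\alpha(p)=H(p)$ a pro-$p$-Sylow of~$H$. Starting from any~$H(p)$ and extending level by level (from the largest finite index and upward), Tychonoff-compactness in the space of closed subgroups of~$G_\FF$ produces a compatible family; the intersection $\bigcap_\alpha H_\alpha(p)$ is a pro-$p$-subgroup of~$H$ containing $H(p)$, hence equals it. With this in hand: for $\Leftarrow$, if $H(p)\leq gH'g\inv$ then for any~$\beta$ the open subgroup $gH'_\beta g\inv$ contains the filtered intersection $\bigcap_\alpha H_\alpha(p)$, so compactness forces $H_\alpha(p)\subseteq gH'_\beta g\inv$ for some~$\alpha$, giving $\EE_\alpha\pgeq\EE'_\beta$. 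For $\Rightarrow$, given~$\beta$ the finite-level equivalence plus $H_\alpha$-conjugacy of pro-$p$-Sylows furnishes $g\in G_\FF$ with $H_\alpha(p)\leq gH'_\beta g\inv$, hence $H(p)\leq gH'_\beta g\inv$; the nonempty closed sets $S_\beta:=\{g\in G_\FF\mid H(p)\leq gH'_\beta g\inv\}$ are filtered-decreasing in compact~$G_\FF$, so $\bigcap_\beta S_\beta=\{g\mid H(p)\leq gH'g\inv\}$ is nonempty, yielding $H\pleq H'$.

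The main obstacle is the coherence step: the finite-level equivalence characterizes the Sylow only up to conjugacy in~$H_\alpha$, whereas at the limit one needs actual containments of a single fixed Sylow~$H(p)$. The compatible system circumvents this mismatch, and its existence (via compactness in the space of closed subgroups) is really what makes the passage from $\pgeq$ to $\pleq$ go through.
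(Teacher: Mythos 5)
Your proof is correct and follows essentially the same route as the paper's: the double-coset decomposition $\EE_\alpha\otimes_\FF\EE'_\beta\cong\prod_g\KK_g$ with degrees $[H_\alpha:H_\alpha\cap{}^gH'_\beta]$ at finite level, followed by two compactness arguments to pass to the limits in $\EE$ and $\EE'$. Your explicit construction of a compatible system of pro-$p$-Sylows is a careful spelling-out of what the paper compresses into the one-line remark that a cofiltered intersection of subgroups lies in an open subgroup iff one of them does, and your finite-intersection-property argument for the sets $S_\beta$ matches the paper's inverse limit of nonempty finite sets $X_\beta\subseteq G_\FF/N_\beta$.
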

\begin{proof}
We first consider the case where both $\EE$ and $\EE'$ are finitely generated hence finite.
Then $\EE\otimes_\FF\EE'\cong\prod_{g\in H\backslash G_{\FF}/H'}\bar{\FF}^{H\cap{}^gH'}$ is a product of finite extensions whose degrees over~$\EE=\bar{\FF}^H$ are given by~$[H\colon H\cap{}^gH']$.
We conclude that $\EE\pgeq\EE'$ iff there exists~$g\in G_{\FF}$ such that $p\nmid [H\colon H\cap{}^gH']$.
Letting $H(p)\leq H$ be a pro-$p$-Sylow, this is equivalent to the existence of~$g$ such that $p\nmid [H(p)\colon H(p)\cap{}^gH']$, or $H(p)\leq_G H'$.
This shows~\eqref{eq:EpE'} in this case.

More generally, write $\EE=\colim \EE_\alpha$ as the union of its finite subextensions, and let us still assume $\EE'/\FF$ finite.
It is then clear that~\eqref{eq:EpE'} still holds since a cofiltered intersection of subgroups is contained in an open subgroup iff one of them is.

Finally, in the general case consider the open normal subgroups~$N_\beta\leq G_\FF$, set $H_\beta'=N_\beta H'$ and note that $\EE'=\colim \bar{\FF}^{H_\beta'}$.
For each $\beta$ let $X_\beta\subseteq G_\FF/N_\beta$ be the (finite) subset of elements $gN_\beta$ such that $H(p)N_\beta/N_\beta\leq{}^gH_\beta'/N_\beta$.
This is an inverse subsystem of $(G_\FF/N_\beta)_\beta$ and its limit is non-empty iff each $X_\beta$ is.
But an element $g\in G_\FF=\varprojlim G_\FF/N_\beta$ in this inverse limit is characterized by satisfying $H(p)\leq{}^gH'$.
This completes the proof.
\end{proof}

\begin{rmk}
The equivalence classes of subgroups with respect to this relation are in bijection with pro-$p$-subgroups.
The reader will remember that these are also used to index the strata in~\eqref{eq:DAM-strata}.
It will be convenient in the sequel to agree on the following short-hand for the closed points in~$\Spc(\DPerm(G;\ZZ/p)^{\omega})$ for an arbitrary pro-finite group~$G$.
Given a subgroup~$H\leq G$, we set
\[
\gm(H):=\gm(H(p)),
\]
where $H(p)\leq H$ is a pro-$p$-Sylow subgroup.
This is rigged in such a way that $\gm(H)=\gm(H')$ iff $H\psim H'$.
We will see in a bit that, as one might guess, for an algebraic extension~$\EE/\FF$ corresponding to~$H\leq G_\FF$, one has
$\pi(\iso{\EE})=\gm(H)$.
\end{rmk}

The following result generalizes~\cite[Example~7.30]{MR4946248}.
\begin{lem}
\label{sta:very-closed-point}
Let $G$ be profinite.
The closed point $\gm(G)\in\Spc(\DPerm(G;\ZZ/p)^{\omega})$ is generated (as a tt-ideal) by the objects
\[
\ZZ/p(G/H),\qquad p\bigm\vert [G:H].
\]
\end{lem}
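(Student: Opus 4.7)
The plan is to bootstrap from the finite-group case \cite[Example~7.30]{MR4946248} via the filtered colimit presentation $\DPerm(G;\ZZ/p)^\omega=\colim_N\DPerm(G/N;\ZZ/p)^\omega$ used in \Cref{exa:DAM}, where $N$ ranges over open normal subgroups and the transition functors are inflations. Write $F_N$ for the structure functor into this colimit.

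The key identification is that under the resulting homeomorphism
\[
\Spc(\DPerm(G;\ZZ/p)^\omega)\;\cong\;\varprojlim_N\Spc(\DPerm(G/N;\ZZ/p)^\omega)
\]
the closed point $\gm(G)=\gm(G(p))$ corresponds to the compatible family $(\gm(G/N))_N$. Indeed, $G(p)N/N$ is a $p$-Sylow of the finite quotient $G/N$ for every $N$, so $\gm(G/N)=\gm(G(p)N/N)$ sits in the stratum indexed by $G(p)N/N$, and the pro-$p$-Sylow $G(p)$ is the inverse limit of these. In particular, for any compact object $X\simeq F_N(X_N)$ in the colimit, one has $X\in\gm(G)$ if and only if $X_N\in\gm(G/N)$.

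On the level of generators, observe that for any open $H\leq G$ containing an open normal $N$, the inflation of $\ZZ/p((G/N)/(H/N))$ is $\ZZ/p(G/H)$, and $p\mid[G:H]=[G/N:H/N]$. Hence the proposed profinite family $\{\ZZ/p(G/H):H\text{ open},\,p\mid[G:H]\}$ is exactly the inflation of the analogous families at each finite level. Using the standard compatibility of tt-ideal generation with filtered colimits of small rigid tt-categories, the tt-ideal they generate in $\DPerm(G;\ZZ/p)^\omega$ is the filtered union $\bigcup_N F_N(\cJ_N)$, where $\cJ_N$ denotes the tt-ideal at level $G/N$ generated by $\{\ZZ/p((G/N)/\bar{H}):p\mid[G/N:\bar{H}]\}$. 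By the finite-group case, $\cJ_N=\gm(G/N)$ for each $N$, and by the identification of the previous paragraph $\bigcup_N F_N(\gm(G/N))=\gm(G)$, as desired.

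The main substantive input is the finite-group statement, which we take from the cited reference. The remaining work at the profinite level is bookkeeping: the identification of $\gm(G)$ with the compatible family $(\gm(G/N))_N$, and the continuity of tt-ideal generation along the filtered colimit, both of which rest on Stevenson's limit theorem invoked in \Cref{notn:limit}.
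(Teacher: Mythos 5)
Your profinite reduction is essentially the paper's: identify $\gm(G)$ with the compatible family $(\gm(G/N))_N$ under the limit homeomorphism of \Cref{notn:limit} and push the generators through the inflations. The gap is in what you take as the ``finite-group case''. The cited \cite[Example~7.30]{MR4946248} establishes the statement only for \emph{$p$-groups} (the paper introduces the lemma as a \emph{generalization} of that example), and for a general profinite $G$ the finite quotients $G/N$ are of course not $p$-groups. So the finite case still has to be proved, and this is precisely the substantive step your argument omits; your assertion ``$\cJ_N=\gm(G/N)$ by the finite-group case'' is therefore unsupported as written.

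The paper supplies the missing step by a Sylow transfer argument: for finite $G$ with $p$-Sylow $P$, one has $\gm(G)=(\Res^G_P)^{-1}\gm(P)$ by the construction of these closed points, and $\gm(P)=\langle\ZZ/p(P/H)\mid H\lneq P\rangle$ by the cited example. The condition $\Res^G_P(X)\in\langle\ZZ/p(P/H)\rangle$ is then converted into $X\in\langle\Ind^G_P\ZZ/p(P/H)\rangle$, using that $X$ is a direct summand of $\Ind^G_P\Res^G_P(X)$ (as $p\nmid[G:P]$), and finally $\langle\Ind^G_P\ZZ/p(P/H)\mid H\lneq P\rangle$ is identified with $\langle\ZZ/p(G/H)\mid p\bigm\vert[G:H]\rangle$ via the Mackey formula. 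Some argument of this kind needs to be inserted before your colimit bookkeeping (which is otherwise fine) can be run.
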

\begin{proof}
Let $\cI_G$ denote the tt-ideal in the statement.
Assume first that $G$ is finite and let $P$ be a $p$-Sylow.
By definition~\cite[7.26]{MR4946248}, $\gm(G)$ is obtained from~$\gm(P)\in\Spc(\DPerm(P;\ZZ/p)^{\omega})$ via restriction.
For the latter point we know the statement from~\cite[Example~7.30]{MR4946248}.
Now, for $X\in\DPerm(G;\ZZ/p)^{\omega}$,
\begin{align*}
  X\in\gm(G) &\Leftrightarrow \Res^G_P(X)\in\gm(P)\\
             &\Leftrightarrow \Res^G_P(X)\in\cI_P=\langle \ZZ/p(P/H), \ H\lneq P\rangle\\
             &\Leftrightarrow X\in\langle\Ind^G_P\ZZ/p(P/H), \ H\lneq P\rangle 
\end{align*}
where the last equivalence follows from the fact that $X$ is a direct summand of $\Ind^G_P\Res^G_P(X)$ and from the Mackey formula.
But the last ideal is just $\cI_G$.

Now, for the general case, consider the system of open normal subgroups $(N_\beta)_\beta$.
Under the identification of \Cref{notn:limit} (cf.\ \Cref{exa:DAM}) the point $\gm(G)$ corresponds to the system $(\gm(G/N_\beta))_\beta$.
Thus the claim follows from the finite case.
\end{proof}

\begin{cor}
\label{sta:DPerm-closed-characterization}
Let $H,H'\leq G$ be two subgroups with $H'$ open.
Then $\ZZ/p(G/H')\in\gm(H)$ iff $H\not\pleq H'$.
\end{cor}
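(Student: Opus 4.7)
My plan is to reduce to the case of a finite group and then analyze via the modular fixed point functor together with the cohomological support of permutation modules. Since $H'$ is open, it contains an open normal subgroup $N\lhd G$; set $\bar G=G/N$, $\bar H=HN/N$ and $\bar{H'}=H'/N$. Then $\ZZ/p(G/H')$ is the inflation of $\ZZ/p(\bar G/\bar{H'})$ along $\DPerm(\bar G;\ZZ/p)\to\DPerm(G;\ZZ/p)$, and under the corresponding projection $\Spc(\DPerm(G;\ZZ/p)^{\omega})\to\Spc(\DPerm(\bar G;\ZZ/p)^{\omega})$ coming from the filtered colimit presentation in \Cref{exa:DAM}, the closed point $\gm(H)=\gm(H(p))$ projects to $\gm(\bar H)=\gm(H(p)N/N)$; here one uses that $H(p)N/N$ is a $p$-Sylow of $\bar H$, since $H/H(p)$ is pro-$p'$ and hence so is the finite quotient $\bar H/(H(p)N/N)$. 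Together with the equivalence $H\pleq H'\Leftrightarrow\bar H\pleq\bar{H'}$, which uses that $N\leq H'$ is normal in $G$, this reduces the claim to the case of a finite group. The verification that $\gm(H)$ projects to $\gm(\bar H)$ is the delicate part of this plan; it should follow from the construction of $\check\Psi^{H(p)}$ as the filtered colimit of the finite modular fixed point functors $\check\Psi^{H(p)N/N}$.

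For finite $G$ I would set $K=H(p)$ and $W=\Weyl{G}{K}$, and use that $\gm(H)=\cP(K,\Hm^{>0}(W;\ZZ/p))$ by \Cref{rmk:DPerm-Spc}, so that $X\in\gm(H)$ if and only if $\check\Psi^K(X)$ lies in the unique closed point of $\cV_W\cong\Spc(\rep(W;\ZZ/p))$. Since $\Psi^K$ $k$-linearizes $K$-fixed points on $G$-sets, one obtains
\[
\check\Psi^K(\ZZ/p(G/H'))=\ZZ/p\bigl[(G/H')^K\bigr],
\]
the permutation $kW$-module on $(G/H')^K=\{gH'\mid K\subseteq gH'g^{-1}\}$.

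The two cases then finish the argument. If $H\not\pleq H'$, then $(G/H')^K=\emptyset$, so $\check\Psi^K(\ZZ/p(G/H'))=0$ lies in every prime, and hence $\ZZ/p(G/H')\in\gm(H)$. If $H\pleq H'$, decompose $(G/H')^K=\bigsqcup_j W/L_j$ into $W$-orbits, so $\check\Psi^K(\ZZ/p(G/H'))=\bigoplus_j\Ind_{L_j}^W\unit$. By the Avrunin--Scott--Benson--Carlson--Rickard support theorem, each summand has cohomological support equal to the image of $\cV_{L_j}\to\cV_W$ induced by restriction; since this restriction sends $\Hm^{>0}(W;\ZZ/p)$ into $\Hm^{>0}(L_j;\ZZ/p)$, the closed point of $\cV_{L_j}$ contracts to the closed point of $\cV_W$. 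Thus the closed point of $\cV_W$ lies in the support of every summand, so $\check\Psi^K(\ZZ/p(G/H'))$ is not in the closed prime, and $\ZZ/p(G/H')\notin\gm(H)$.
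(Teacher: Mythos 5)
Your argument is correct, but it takes a genuinely different route from the paper's. The paper stays in the profinite setting throughout: it restricts $\ZZ/p(G/H')$ along $\Res^G_H$, applies the Mackey formula to decompose the restriction as $\bigoplus_{g\in H\backslash G/H'}\ZZ/p(H/(H\cap{}^gH'))$, and then invokes \Cref{sta:very-closed-point} (of which the statement is a corollary) to test membership in $\gm(H)\subseteq\DPerm(H;\ZZ/p)$ summand by summand via the condition $p\nmid[H:H\cap{}^gH']$. You bypass \Cref{sta:very-closed-point} entirely: after passing to a finite quotient you apply the modular fixed-point functor $\check{\Psi}^{H(p)}$ and detect membership in the closed point of the stratum by whether the permutation module on $(G/H')^{H(p)}$ vanishes, which yields the transparent combinatorial criterion $(G/H')^{H(p)}=\emptyset$ iff $H\not\pleq H'$. (Your appeal to the support theorem in the second case is more than you need: the prime of $\rep(\Weyl{G}{H(p)};\ZZ/p)$ sitting at the closed point of $\cV_{\Weyl{G}{H(p)}}$ is the zero ideal, so any nonzero object fails to lie in it.) The trade-off is that the paper's route needs no further limit argument beyond what \Cref{sta:very-closed-point} already established, whereas your reduction to finite quotients hinges on the step you rightly flag as delicate, namely that $\gm(H)$ corresponds to the compatible system $\left(\gm(HN/N)\right)_N$ under $\Spc(\DPerm(G;\ZZ/p)^{\omega})\cong\varprojlim_N\Spc(\DPerm(G/N;\ZZ/p)^{\omega})$; this is not proven in your plan, but it is precisely the compatibility already used in the profinite part of the proof of \Cref{sta:very-closed-point} and in \Cref{exa:DAM}, so it is available from the same sources the paper relies on.
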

\begin{proof}
It is immediate from \Cref{sta:very-closed-point} that $\ZZ/p(H/L)\notin\gm(H)\subseteq\DPerm(H;\ZZ/p)$ iff $p\nmid[H:L]$.
Therefore, 
$\ZZ/p(G/H')\notin\gm(H)=(\Res^G_{H})\inv\gm(H)$ iff there exists $g\in G$ such that $p\nmid[H:H\cap{}^gH']$, by the Mackey formula.
Which is equivalent to $H(p)\leq_G H'$, that is, $H\pleq H'$.
\end{proof}

To describe exactly the images of isotropic points in $\Spc(\DAM(\FF;\ZZ/p))$ we need a short interlude on the naturality of the identification between Artin motives and derived permutation modules (\Cref{exa:DAM}).
When $\EE/\FF$ is a finite extension we showed in~\cite[Corollary~8.11]{MR4693637} that the square
\begin{equation}
\label{eq:DAM-vs-DPerm}
\begin{tikzcd}
\DAM(\FF;\ZZ/p)
\ar[r, "-\otimes_\FF\EE"]
\ar[d, "\sim", leftrightarrow]
&
\DAM(\EE;\ZZ/p)
\ar[d, "\sim", leftrightarrow]
\\
\DPerm(G_{\FF};\ZZ/p)
\ar[r, "{\Res^{G_{\FF}}_{G_\EE}}"]
&
\DPerm(G_{\EE};\ZZ/p)
\end{tikzcd}
\end{equation}
commutes, in which we view $G_{\EE}\hookrightarrow G_{\FF}$ as a subgroup (canonical once one chooses an algebraic closure of~$\EE$).
Our goal is to extend this result to arbitrary extensions.

\begin{cns}
Let $\EE/\FF$ be an arbitrary extension and $\bar{\EE}/\EE$ an algebraic closure.
Let $\bar{\FF}\subseteq\bar{\EE}$ be the algebraic closure of~$\FF$ inside~$\bar{\EE}$ and consider the composite morphism
\begin{equation}
\label{eq:r}
 r_{\EE/\FF}\colon G_{\EE}=\Gal(\bar{\EE}/\EE)\hookrightarrow\Gal(\bar{\EE}/\FF)\twoheadrightarrow\Gal(\bar{\FF}/\FF)=G_{\FF}.
 \end{equation}
 We denote by $\Res^{G_{\FF}}_{G_\EE}\colon\DPerm(G_{\FF};\ZZ/p)\to\DPerm(G_{\EE};\ZZ/p)$ the induced $\ZZ/p$-linear symmetric monoidal left adjoint functor.
\end{cns}

\begin{prop}
\label{sta:DAM-vs-DPerm}
Let $\EE/\FF$ be an arbitrary extension.
Then the square of stable presentably symmetric monoidal $\ZZ/p$-linear $\infty$-categories in~\eqref{eq:DAM-vs-DPerm} commutes.
\end{prop}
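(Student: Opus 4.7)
The plan is to extend the known finite case~\eqref{eq:DAM-vs-DPerm} by decomposing the extension $\EE/\FF$ as a tower $\FF\subseteq \EE_0\subseteq \EE$, where $\EE_0$ is the relative algebraic closure of $\FF$ inside $\EE$. The map~\eqref{eq:r} then factors as
\[
G_\EE\twoheadrightarrow G_{\EE_0}\hookrightarrow G_\FF,
\]
where the surjection on the left comes from $\EE/\EE_0$ being a regular extension (in characteristic zero, separability is automatic, and $\EE_0$ is algebraically closed in $\EE$ by construction), so that $\bar{\EE_0}$ and $\EE$ are linearly disjoint over $\EE_0$ and any $\EE_0$-automorphism of $\bar{\EE_0}$ lifts to an $\EE$-automorphism of $\bar{\EE}$. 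By functoriality of both base change and restriction with respect to composition, the square in the statement decomposes as the outer rectangle of two stacked subsquares, one for the algebraic extension $\EE_0/\FF$ and one for the regular extension $\EE/\EE_0$. It suffices to establish commutativity of each.

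For $\EE_0/\FF$ algebraic, I would write $\EE_0=\colim_\alpha \EE_{0,\alpha}$ as the filtered union of its finite subextensions of $\FF$, so that $G_{\EE_0}=\lim_\alpha G_{\EE_{0,\alpha}}$ as profinite groups. By the continuity invoked in \Cref{exa:DAM} one has
\[
\DAM(\EE_0;\ZZ/p)\simeq\colim_\alpha\DAM(\EE_{0,\alpha};\ZZ/p),\qquad \DPerm(G_{\EE_0};\ZZ/p)\simeq\colim_\alpha\DPerm(G_{\EE_{0,\alpha}};\ZZ/p),
\]
where the colimits are taken in stable presentably symmetric monoidal $\ZZ/p$-linear $\infty$-categories, and $-\otimes_\FF\EE_0$ and $\Res^{G_\FF}_{G_{\EE_0}}$ are the canonical morphisms out of these presentations. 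Commutativity for $\EE_0/\FF$ therefore follows from commutativity for each finite $\EE_{0,\alpha}/\FF$, which is the content of~\eqref{eq:DAM-vs-DPerm}.

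For $\EE/\EE_0$ regular, I would argue directly that $-\otimes_{\EE_0}\EE$ and $\Res^{G_{\EE_0}}_{G_\EE}$ agree on compact generators as symmetric monoidal left adjoints between presentable $\ZZ/p$-linear $\infty$-categories. Setting $r:=r_{\EE/\EE_0}$, a generator $\Spec(\EE_0')$ with $\EE_0'/\EE_0$ finite separable goes, on one side, to $\Spec(\EE_0'\otimes_{\EE_0}\EE)=\Spec(\EE_0'\cdot\EE)$, a single field by linear disjointness, corresponding to the $G_\EE$-orbit $G_\EE/G_{\EE_0'\cdot\EE}$; on the other side, the orbit $G_{\EE_0}/G_{\EE_0'}$ viewed via the surjection $r$ is a single $G_\EE$-orbit $G_\EE/r\inv(G_{\EE_0'})$. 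One then identifies $r\inv(G_{\EE_0'})=G_{\EE_0'\cdot\EE}$: any $\sigma\in G_\EE$ already fixes $\EE$, so it fixes $\EE_0'\cdot\EE$ iff it fixes $\EE_0'$, iff $r(\sigma)\in G_{\EE_0'}$.

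The main obstacle is upgrading agreement on generating objects to a genuine natural isomorphism of symmetric monoidal functors. This is handled by observing that both functors are obtained as symmetric monoidal left Kan extensions of their restriction to the subcategory of finite étale schemes (resp.\ finite continuous $G$-sets), and that under the Galois correspondence these restrictions agree coherently in each step of the reduction: in the algebraic case, by applying \cite[Corollary~8.11]{MR4693637} to each $\EE_{0,\alpha}/\FF$ and passing to the colimit; in the regular case, by the orbit-level identification above. Pasting the two resulting commuting squares of symmetric monoidal functors then yields the desired commutativity of~\eqref{eq:DAM-vs-DPerm} for arbitrary $\EE/\FF$.
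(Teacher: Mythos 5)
Your strategy is workable but takes a genuinely different, and considerably longer, route than the paper. The paper's proof handles an arbitrary extension $\EE/\FF$ in a single step: since all four functors in~\eqref{eq:DAM-vs-DPerm} are symmetric monoidal left adjoints preserving the weight hearts on compact objects, it suffices to check commutativity on the hearts, and by $\ZZ/p$-linearity and the universal property of permutation modules one reduces further to finite $G_{\FF}$-sets; there the claim is the natural identification $\hom_{\FF}(A,\bar{\FF})=\hom_{\FF}(A,\bar{\EE})=\hom_{\EE}(A\otimes_\FF\EE,\bar{\EE})$ for a finite \'etale $\FF$-algebra~$A$, with the $G_{\EE}$-action on the right induced via~$r_{\EE/\FF}$ from the $G_{\FF}$-action on the left. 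This identity is manifestly natural in~$A$, so no d\'ecomposition of $\EE/\FF$ into algebraic and regular pieces is needed, and the coherence problem you flag never arises.

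Two places in your argument deserve scrutiny. First, the algebraic step relies on continuity statements of the form $\DAM(\EE_0;\ZZ/p)\simeq\colim_\alpha\DAM(\EE_{0,\alpha};\ZZ/p)$ and $\DPerm(G_{\EE_0};\ZZ/p)\simeq\colim_\alpha\DPerm(G_{\EE_{0,\alpha}};\ZZ/p)$ along a descending chain of \emph{open subgroups}; the continuity recorded in \Cref{exa:DAM} is for the tower of finite \emph{quotients} $G_\FF/N$, so your version requires an additional (standard but nontrivial) interchange-of-colimits argument, together with the compatibility of the finite-level equivalences of \cite[Corollary~8.11]{MR4693637} with the transition functors. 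Second, and more importantly, your regular-extension step only identifies the two composites on \emph{objects} (transitive $G$-sets, i.e.\ orbits); the assertion that the restrictions to finite \'etale schemes ``agree coherently'' is exactly the point at issue, and an orbit-by-orbit identification does not by itself produce a natural symmetric monoidal isomorphism. The fix is precisely the displayed adjunction-style identity above, applied directly to $\EE/\EE_0$ (or, better, to $\EE/\FF$ at once) --- at which point your entire reduction becomes superfluous. So: correct in outline, but the real content is concentrated in the step you defer to ``coherence'', and the paper's direct argument both supplies it and renders the d\'ecomposition unnecessary.
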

\begin{proof}
Since all functors are symmetric monoidal left adjoints and preserve the weight hearts on compact objects~\cite[\S\,5]{fuhrmann2025modularfixedpointsequivariant} it suffices (in fact, is equivalent) to show the commutativity of $\ZZ/p$-linear symmetric monoidal functors on the weight hearts.
By $\ZZ/p$-linearity and the universal property of permutation modules~\cite[Lemma~4.8]{MR4693637} one further may restrict to $G_{\FF}$-sets.
Thus let $A$ be a finite \'etale $\FF$-algebra with corresponding $G_{\FF}$-set
\[
\hom_{\FF}(A,\bar{\FF})=\hom_{\FF}(A,\bar{\EE})=\hom_{\EE}(A\otimes_\FF\EE,\bar{\EE}).
\]
The right-most term is the $G_{\bar{\EE}}$-set corresponding to $A\otimes_\FF\EE$ and the action is induced via~$r_{\EE/\FF}$ by the action of $G_{\FF}$ on the left-most term.
This implies the claim.
\end{proof}

\begin{prop}
Let $\pi\colon\Spc(\DM(\FF;\ZZ/p)^\omega)\to\Spc(\DAM(\FF;\ZZ/p)^\omega)$ be the map on spectra induced by the obvious inclusion.
For any extension $\EE/\FF$ we have:
\[
\pi\left(\iso{\EE}\right)=\gm(r_{\EE/\FF}(G_\EE))\]
\end{prop}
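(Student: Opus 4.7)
The plan is to use the classification of closed points of $\Spc(\DAM(\FF;\ZZ/p)^{\omega})$ via which basic generators $\ZZ/p(G_\FF/H')$ they contain. From \Cref{sta:isotropic-closed-image} we already know that $\pi(\iso{\EE})$ is closed, so under the identification of \Cref{exa:DAM} it equals $\gm(K)$ for some closed pro-$p$-subgroup $K\leq G_\FF$, determined up to conjugation. To prove the proposition it suffices to show that $\pi(\iso{\EE})$ and $\gm(H)$, with $H:=r_{\EE/\FF}(G_\EE)$, contain the same generators $\ZZ/p(G_\FF/H')$ for every open subgroup $H'\leq G_\FF$. Since by \Cref{sta:DPerm-closed-characterization} one has $\ZZ/p(G_\FF/H')\in\gm(H)$ iff $H\not\pleq H'$, the task reduces to showing that $\ZZ/p(G_\FF/H')\in\iso{\EE}$ (viewed inside $\DM(\FF;\ZZ/p)$) iff $H\not\pleq H'$.

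First I would transport the question to the motivic side via \Cref{sta:DAM-vs-DPerm}: the generator $\ZZ/p(G_\FF/H')$ corresponds to the Artin motive $M(\Spec K_0)$ with $K_0=\bar{\FF}^{H'}$. Decomposing $K_0\otimes_\FF\EE=\prod_i L_i$ into its field factors, base change to $\EE(\PP^\infty)$ yields $\prod_i L_i(\PP^\infty)$ with $[L_i(\PP^\infty):\EE(\PP^\infty)]=[L_i:\EE]$. The key step is then to observe that each summand $M(\Spec L_i(\PP^\infty))$ vanishes in the local isotropic category iff $p\mid[L_i:\EE]$: when $p\mid[L_i:\EE]$, the point $\Spec L_i(\PP^\infty)$ is itself $p$-anisotropic and hence killed by construction; when $p\nmid[L_i:\EE]$, the trace composite $\unit\to M(\Spec L_i(\PP^\infty))\to\unit$ is multiplication by a unit in $\ZZ/p$, exhibiting $\unit$ as a retract of $M(\Spec L_i(\PP^\infty))$, which therefore cannot vanish (the local isotropic category being non-trivial). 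Hence $M(\Spec K_0)\in\iso{\EE}$ iff $p\mid[L_i:\EE]$ for every~$i$.

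Next I would re-express this degree condition in group-theoretic terms. By \Cref{sta:DAM-vs-DPerm}, base change $\otimes_\FF\EE$ corresponds to restriction along $r_{\EE/\FF}$, so the $G_\EE$-set underlying $K_0\otimes_\FF\EE$ is $G_\FF/H'$ with $G_\EE$ acting through $r_{\EE/\FF}$. Its orbit decomposition reads $\coprod_{g\in H\backslash G_\FF/H'}G_\EE/r_{\EE/\FF}\inv(gH'g\inv)$, matching the factorization into the $L_i$, and, since $r_{\EE/\FF}$ surjects onto $H$, the stabilizer calculation yields $[L_g:\EE]=[H:H\cap gH'g\inv]$. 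Thus $M(\Spec K_0)\in\iso{\EE}$ iff $p\mid[H:H\cap gH'g\inv]$ for every $g\in G_\FF$; by a standard Sylow argument this is equivalent to $H(p)$ not being subconjugate to $H'$ in~$G_\FF$, that is, $H\not\pleq H'$, which is precisely the condition required.

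The main obstacle, I expect, is the middle step: pinning down exactly which zero-dimensional motives survive the isotropic localization. This relies on the interplay between the $p$-anisotropy convention (which kills point motives of $p$-divisible degree) and the non-triviality of the local isotropic category (which forces retracts of $\unit$ to be non-zero). Everything else is essentially routine bookkeeping: translating between the motivic and the profinite-Mackey pictures through \Cref{sta:DAM-vs-DPerm} and \Cref{exa:DAM}, and appealing to the closed-point criterion \Cref{sta:DPerm-closed-characterization}.
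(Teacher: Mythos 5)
Your proposal is correct and follows essentially the same route as the paper: both arguments combine the closedness of $\pi(\iso{\EE})$ from \Cref{sta:isotropic-closed-image}, the characterization of closed points by their permutation-module content (\Cref{sta:DPerm-closed-characterization}), the translation between Artin motives and permutation modules (\Cref{sta:DAM-vs-DPerm}), and the criterion that a point motive dies in the isotropic quotient iff its degree is divisible by~$p$. The only difference is organizational: the paper first settles the case $\EE=\FF$ and then reduces the general case via restriction along $r_{\EE/\FF}$, whereas you carry out the corresponding Mackey/orbit computation (decomposing $K_0\otimes_\FF\EE$ into field factors of degrees $[H:H\cap{}^gH']$) directly over~$\FF$.
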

\begin{proof}
Assume first that $\EE=\FF$.
A finite extension $\FF'/\FF$ belongs to $\iso{\FF}$ iff $\FF'\otimes_\FF\FF(\PP^\infty)$ is $p$-anisotropic which is equivalent to $p\bigm\vert [\FF':\FF]$, or $\FF\not\pgeq\FF'$.
Because of \Cref{sta:isotropic-closed-image} we know that $\pi(\iso{\FF})$ is closed.
But \Cref{sta:DPerm-closed-characterization} says that the closed points are characterized by which (indecomposable) permutation modules they contain.
Applying that result to $H=G_\FF$ yields the statement.

Now, let $\EE/\FF$ be an arbitrary extension.
A similar argument as in \Cref{sta:DPerm-closed-characterization} yields that $\ZZ/p(G_\FF/H')\in \gm(r_{\EE/\FF}(G_{\EE}))$ iff $\Res^{G_\FF}_{G_\EE}\ZZ/p(G_\FF/H')\in \gm(G_\EE)$.
By the previous case we know $\gm(G_\EE)=\pi(\iso{\EE})\in\Spc(\DPerm(G_\EE;\ZZ/p)^{\omega})$ so the statement follows from \Cref{sta:DAM-vs-DPerm}.
\end{proof}

\appendix{}

\section{Generalized comparison maps}
\label{sec:gener-comp-maps}
\sectionauthor{Ivo Dell'Ambrogio}

In \cite{MR3163513} Greg Stevenson and I developed a flexible generalization of Balmer's comparison map $\rho\colon \Spc(\cat K) \to \Spch(\End_\cat K^*(\unit))$ from the tt-spectrum of a tt-category~$\cat K$ to the Zariski spectrum of its graded central ring, where we allowed more variety of choice for the target space.
Some such variations on the basic theme were already possible in~\cite{balmer:sss}, where the grading of the central ring could be twisted by any invertible object~$\omega$ other than the canonical choice~$\omega=\Sigma(\unit)$. 
Morally one should also be able to use a collection of several invertible objects to obtain gradings by abelian groups  more general than~$\mathbb Z$, but it is unclear in what generality such `multigraded central rings' actually exist as ordinary rings, because one quickly runs into coherence problems.
(In the literature such problems are sometimes swept under the carpet.)
The solution of \cite{MR3163513} was to avoid coherence issues altogether by considering central graded commutative \emph{2-rings} of~$\cat K$, \ie full tensor subcategories whose objects are all invertible. 
It turns out that such 2-rings~$\cat R$ admit a Zariski spectrum of homogeneous prime ideals, defined essentially in the usual way, as well as a comparison map $\Spc(\cat K)\to \Spc(\cat R)$ enjoying all the basic properties of Balmer's.

Somehow, in \cite{MR3163513} we had neglected to verify that \emph{if} there exists a multigraded ring~$R$ which is suitably equivalent to~$\cat R$, then the resulting Zariski spectra and comparison maps actually agree.
The first goal of this appendix is to remedy that oversight; see \Cref{thrm:agreement} and \Cref{cor:multigraded-comparison}.

The second goal is to further generalize our comparison maps by allowing central 2-rings which are graded by a sub-2-monoid of the Picard 2-group; see \Cref{def:general-central-2-rings}.
This generalization is akin to considering (and in fact generalizes) an $\mathbb N$-graded central ring of~$\cat K$ such as the connective or coconnective part, $\End^{\leq 0}(\unit)$ or $\End^{\geq 0}(\unit)$, of the usual graded central ring.
We show the Zariski spectrum and comparison map of \cite{MR3163513} work equally well in this greater generality.
In particular, they are still compatible with central localization; see \Cref{thrm:central-loc} and \Cref{cor:central_loc_in_context}.

From now on, fix an essentially small tt-category~$\cat K$ with tensor $\otimes$ and unit~$\unit$.

\subsection{Comparison maps for central 2-rings of $\cat K$}

We recall the main tools of \cite{MR3163513}:

\begin{defn} \label{defn:2-ring}
A \emph{graded commutative 2-ring} is an essentially small preadditive ($=$~$Ab$-enriched) symmetric monoidal category~$\cat R$ in which every object is tensor-invertible. 
The tensor product is assumed to be additive in each variable.
By hypothesis, the maximal subgroupoid of~$\cat R$ is equal to its Picard category $\2Pic(\cat R)$ ($=$~its tensor subcategory of invertible morphisms and invertible objects); hence in particular it is an \emph{abelian 2-group} ($=$~a symmetric monoidal category in which all morphisms and objects are invertible) having the same objects as~$\cat R$.
We think of the 2-ring $\cat R$ as being graded by the abelian 2-group~$\2Pic(\cat R)$. 
In the following we will often drop the words ``graded commutative'' and ``abelian'', since no other kind of 2-rings or 2-groups will be considered.
\end{defn}

\begin{rmk} [Commutativity and translation]
\label{rmk:translate} 
We call such 2-rings ``graded commutative'' because the product of its elements (\ie the composition of its morphisms) actually commutes `up to~$\2Pic(\cat R)$', \ie up to taking twists by objects and composing with isomorphisms. 
More precisely, we say that a morphism $\tilde r \in \cat R$ is a \emph{translate} of another $r\in \cat R$ if 
$\tilde r$ can be obtained from $r$ by applying any finite sequence of twists and compositions with isomorphisms.
This is equivalent to having $\tilde r = v (g \otimes r) u$ for a single object $g$ and two isomorphisms $u,v$ (\cite[Lemma~2.7]{MR3163513}), and defines an equivalence relation (\emph{translation}) on the morphisms of~$\cat R$.
Then one can show that for any two composable morphisms $r$ and $s$ in~$\cat R$ we have $s \circ r = \tilde r \circ \tilde s$ for some translates $\tilde r$ of~$r$ and $\tilde s$ of~$s$  (\cite[Prop.\,2.9]{MR3163513}).
\end{rmk}

Based on this, \cite{MR3163513} proceeds to develop the affine algebraic geometry of a graded commutative 2-ring~$\cat R$, pretty much as if it were an ordinary graded ring.
A \emph{homogeneous ideal}, or just \emph{ideal}, $\cat I$ of $\cat R$ is a (two-sided) categorical ideal of morphisms which is closed under twists, \ie under tensoring morphisms with arbitrary objects (and hence under translation).
An ideal $\mathfrak p$ is \emph{prime} if it is proper ($\neq \cat R$) and if $s\circ r\in \mathfrak p $ implies $ s\in \mathfrak p$ or $r\in \mathfrak p$.
The set 
\[ \Spc(\cat R)
\]
of all prime ideals of $\cat R$ is endowed with the \emph{Zariski topology}, where closed subsets are of the form
$V(\cat I) = \{\mathfrak p\in \Spc(\cat R)\mid \cat I\subseteq \mathfrak p\}$ 
for some ideal $\cat I\subseteq \cat R$.
Maximal ideals (for inclusion) are prime, and we have $\Spc(\cat R)=\emptyset$ if and only if $\cat R\simeq 0$.
The resulting \emph{Zariski spectrum} is a spectral space in the sense of Hochster, and is functorial for morphisms $F\colon \cat R\to \cat R'$ of 2-rings (\ie additive tensor functors): they induce  spectral continuous maps 
\[
\Spc(F)\colon \Spc(\cat R')\to \Spc(\cat R)
\]
via $\Spc(F)(\mathfrak p)= F^{-1}\mathfrak p$, so that $\Spc(F_2 \circ F_1)= \Spc(F_1)\circ \Spc (F_2)$ as well as $\Spc(\Id_\cat R)=\Id_{\Spc(\cat R)}$.
More details on all this can be found in \cite[\S2]{MR3163513}.

\begin{defn} [Central 2-rings of $\cat K$]
\label{def:general-central-2-rings}
Now recall our tt-category~$\cat K$, and let $\cat M$ be any replete ($=$~closed under isomorphic objects) full sub-2-monoid ($=$~full symmetric monoidal subcategory) of its Picard category~$\2Pic(\cat K)$.
We associate to~$\cat M$ a graded commutative 2-ring $\cat R_\cat M$ defined as follows. 
The objects of~$\cat R_\cat M$ are those of the 2-group-completion $\overline{\cat M}$, the full sub-2-group of $\2Pic(\cat K)$ generated by~$\cat M$ (add the inverses of all objects). 
Its Hom groups are
\[
\cat R_\cat M (g,h) :=  
\left\{
\begin{array}{ll}
\cat K(g,h) & \textrm{if $h\otimes g^{-1} \in \cat M$ (\ie if $h\cong m\otimes g$ for some $m\in \cat M$)}\\
0 & \textrm{otherwise.}
\end{array}
\right.
\]
The composition and tensor structure are inherited from $\cat K$ in the evident way, and we easily check that the result is indeed a graded commutative 2-ring~$\cat R_\cat M$, graded by the 2-group~$\overline{\cat M}$.
(For the latter: if $r\in \cat K(g,h)$ is an isomorphism between any $g,h\in \overline{\cat M}$, then $h\otimes g^{-1}\cong g\otimes g^{-1}\cong \unit \in \cat M$, from which it follows that indeed $\2Pic(\cat R_\cat M)=\overline{\cat M}$.)
We say that $\cat R_\cat M$ is the \emph{central 2-ring} associated to~$\cat M$, or by a slight abuse of language, that it is the \emph{$\cat M$-graded} central 2-ring of~$\cat K$.
\end{defn}

\begin{rmk} \label{rmk:full}
We stress that $\cat R_\cat M$ is in general not a full subcategory of~$\cat K$ (because of the ``otherwise'' case). 
Indeed, it is full precisely when $\cat M$ is already a sub-2-group, that is when $\overline {\cat M}=\cat M$. 
Only full central 2-rings were considered in~\cite{MR3163513}.
\end{rmk}

\begin{rmk} \label{rmk:bijection}
By definition, however, we require central 2-rings to be replete in~$\cat K$, mostly for aesthetic reasons (similarly to thick subcategories etc.).
We also ask that our full sub-2-monoids $\cat M\subseteq \2Pic(\cat K)$ be replete. 
The latter implies that every such~$\cat M$ is uniquely determined by the ordinary submonoid $M=\Kth_0(\cat M)$ of the ordinary Picard group $\Pic(\cat K)$.
Consequently, we may view the central 2-rings of~$\cat K$ as being parametrized by the submonoids of the Picard group
\[
M \subseteq \Pic(\cat K) 
\quad \longmapsto \quad
\cat R_M := \cat R_\cat M \subset \cat K ,
\]
where $\cat M\subseteq \cat K$ is the full tensor subcategory of all objects whose isomorphism class lies in~$M$. 
(Note that \emph{a~priori} the above mapping need not be injective because~$M$ may contain classes $[g]$ such that $\cat K(\unit , g)=0$, so that a proper submonoid of $M$ omitting such~$[g]$ can still have the same associated central 2-ring.)
\end{rmk}

\begin{exa} [Minimal and maximal central 2-rings]
\label{exa:basic-full-2-rings}
The minimal and maximal sub-2-monoids, $\cat M_{\min}=\{\unit\}$ and $\cat M_{\max}=\2Pic(\cat K)$, are sub-2-groups and thus determine two full central 2-rings of~$\cat K$.
For all purposes, as we shall see, the minimal one $\cat R_{\{\unit\}}$ can be identified with  the ordinary central ring $\mathrm R_\cat K = \End(\unit)$.
\end{exa}

\begin{exa} [Cyclic central 2-rings]
\label{exa:basic-graded-2-rings}
For every invertible object $\omega$ of $\cat K$ we can always define the following three (full and replete) sub-2-monoids of~$\2Pic(\cat K)$
\[
\cat M_\omega:=\{\Sigma^n \omega \mid n\in \mathbb Z\}  , \quad
\cat M_\omega^+:=\{\Sigma^n \omega \mid n\in \mathbb N\}   ,\quad 
\cat M_\omega^-:=\{\Sigma^{-n} \omega \mid n\in \mathbb N\}
 \]
and therefore three central 2-rings $\cat R_\omega := \cat R_{\cat M_\omega}$, $\cat R_\omega^+ := \cat R_{\cat M_\omega^+}$ and $\cat R_\omega^- := \cat R_{\cat M_\omega^-}$, 
only the first of which is always full. 
(If $\cat R_\omega^+$ or $\cat R_\omega^-$ is full it must coincide with~$\cat R_\omega$.) 
They could be called, respectively, the \emph{full ~/ coconnective~/ connective $\omega$-twisted cyclic central 2-ring} of~$\cat K$. 
 The canonical choice $\omega = \Sigma(\unit)$ is always available, in which case we would drop the ``$\omega$-twisted'' qualification.
\end{exa}

\begin{thrm} [General comparison maps]
\label{thrm:comparison-map}
For every tt-category $\cat K$ and every full sub-2-monoid $\cat M\subseteq \2Pic(\cat K)$, the formula $\rho(\cat P) = \{r \in \cat R_\cat M \mid \cone(r)\not \in \cat P\} $ defines a spectral continuous map
\[
\rho = \rho_{\cat K, \cat M} \colon \Spc(\cat K) \to \Spc(\cat R_\cat M)
\]
from the tt-spectrum of $\cat K$ to the Zariski spectrum of its $\cat M$-graded central ring~$\cat R_\cat M$.
This map is natural in the pair $(\cat K, \cat M)$, in the sense that if $(\cat K', \cat M')$ is another such pair and $F$ is a tt-functor $\cat K \to \cat K'$ such that $F\cat M\subseteq \cat M'$, the resulting square is commutative:
\[
\xymatrix{
\Spc(\cat K') \ar[d]_{\rho_{\cat K', \cat M'}} \ar[r]^-{\Spc(F)} & \Spc(\cat K) \ar[d]^{\rho_{\cat K, \cat M}} \\
\Spc(\cat R_{\cat M'}) \ar[r]^-{\Spc(F)} & \Spc(\cat R_{\cat M})
}
\]
\end{thrm}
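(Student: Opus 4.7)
The plan is to reduce to the case of a \emph{full} central 2-ring, treated in \cite{MR3163513}, and then to piggyback on the functoriality of $\Spc(-)$ on 2-rings.

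First I would observe that the sub-2-group $\overline{\cat M}\subseteq\2Pic(\cat K)$ generated by~$\cat M$ (\Cref{def:general-central-2-rings}) is closed under inverses, so by \Cref{rmk:full} the associated 2-ring $\cat R_{\overline{\cat M}}$ is a full central 2-ring of~$\cat K$ in the sense of \loccit. In particular the main theorem of \cite{MR3163513} applies to it: the comparison map $\rho_{\cat K,\overline{\cat M}}\colon \Spc(\cat K)\to \Spc(\cat R_{\overline{\cat M}})$, defined by $\cat P\mapsto \{r\mid \cone(r)\notin\cat P\}$, is a spectral continuous map, natural in the pair $(\cat K,\overline{\cat M})$.

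Next I would introduce the evident inclusion of 2-rings $\iota\colon \cat R_\cat M\hookrightarrow \cat R_{\overline{\cat M}}$: it is the identity on the common object set $\overline{\cat M}$, and the identity on every Hom group that is non-zero on the source side. Being an additive tensor functor, $\iota$ induces a spectral continuous map $\Spc(\iota)\colon \Spc(\cat R_{\overline{\cat M}})\to \Spc(\cat R_\cat M)$, $\mathfrak q\mapsto \iota^{-1}\mathfrak q$, by the general discussion in \cite[\S 2]{MR3163513}. Setting $\rho_{\cat K,\cat M} := \Spc(\iota)\circ \rho_{\cat K,\overline{\cat M}}$ yields a spectral continuous map, and a direct unwinding gives
\[
\rho_{\cat K,\cat M}(\cat P) \;=\; \iota^{-1}\{r\in\cat R_{\overline{\cat M}}\mid \cone(r)\notin\cat P\} \;=\; \{r\in\cat R_\cat M\mid \cone(r)\notin\cat P\},
\]
matching the formula in the statement (since $\iota$ does not alter individual morphisms, only which Hom groups are retained).

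For naturality, any tt-functor $F\colon \cat K\to\cat K'$ with $F\cat M\subseteq\cat M'$ automatically satisfies $F\overline{\cat M}\subseteq\overline{\cat M'}$, because $F$ is monoidal on Picard 2-groups. The induced 2-ring morphisms $F\colon \cat R_\cat M\to \cat R_{\cat M'}$ and $F\colon \cat R_{\overline{\cat M}}\to \cat R_{\overline{\cat M'}}$ fit into an obviously commuting square with the inclusions $\iota,\iota'$; pasting the $\Spc(-)$-image of this square onto the naturality square for $\rho_{\cat K,\overline{\cat M}}$ supplied by \cite{MR3163513} delivers the desired naturality square for $\rho_{\cat K,\cat M}$. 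I do not expect a serious obstacle: the substantive content lies in \cite{MR3163513}, and the present generalization is a routine exercise in functoriality through~$\iota$. The only mildly delicate point is to record that $\iota^{-1}\mathfrak q$ remains a prime ideal whenever $\mathfrak q$ is one, which is a general fact for morphisms of 2-rings but is needed for the formula to make sense.
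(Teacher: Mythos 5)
Your proposal is correct and follows essentially the same route as the paper: the paper likewise defines $\rho_{\cat K,\cat M}$ as the composite of the full-case comparison map $\rho_{\cat K,\overline{\cat M}}$ from~\cite{MR3163513} with the map on Zariski spectra induced by the inclusion $\cat R_\cat M\hookrightarrow\cat R_{\overline{\cat M}}$, verifies the formula by the same unwinding, and disposes of naturality by the same pasting argument.
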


\begin{proof}
Consider the inclusion 
$
J\colon \cat R_\cat M \to \cat R_{\overline{\cat M}}
$
of the $\cat M$-graded central ring into the $\overline{\cat M}$-graded central ring, where as before $\overline{\cat M}$ is the 2-group completion of~$\cat M$ (inside $\2Pic(\cat K)$).
Thus $J$ is the inclusion of a wide sub-2-ring which Hom-wise is `full or zero'.
Since $J$ is a morphism of 2-rings, it induces a continuous map 
\[
\Spc(J) \colon \Spc(\cat R_{\overline{\cat M}}) \to \Spc(\cat R_\cat M)
\]
by restriction of ideals: $\Spc(J)(\mathfrak p)= \cat R_\cat M\cap \mathfrak p$. 
Since $\cat R_{\overline{\cat M}}$ is a \emph{full} central 2-ring of~$\cat K$, we know from \cite[Thm.\,3.10]{MR3163513} that there is a spectral continuous comparison map 
\[
\rho_{\cat K, \overline{\cat M}}\colon  \Spc(\cat K) \to \Spc(\cat R_{\overline{\cat M}})
\]
sending the tt-prime $\cat P$ to the prime ideal $\rho(\cat P) = \{r \in \cat R_{\overline{\cat M}} \mid \cone(r)\not \in \cat P\} $.
Now we simply define $\rho_{\cat K, \cat M}$ to be the composite 
\[
\xymatrix{
\Spc(K) \ar[r]^-{\rho_{\cat K,\overline{\cat M}}} & \Spc(\cat R_{\overline{\cat M}}) \ar[r]^-{\Spc(J)} & \Spc(\cat R_\cat M)
}
\]
which again is spectral and continuous, and which is given by the claimed formula:
\begin{align*}
\rho_{\cat K, \cat M} (\cat P) 
&= \Spc(J) \big( \rho_{\cat K,\overline{\cat M}} (\cat P) \big) 
= \cat R_{\cat M}\cap  \{ r\in \cat R_{\overline{\cat M}} \mid \cone(r)\not\in \cat P\} \\
&= \{ r\in \cat R_{\cat M} \mid \cone(r)\not\in \cat P\}.
\end{align*}
A similarly immediate calculation verifies the last claim of the theorem.
\end{proof}

\subsection{Comparison maps for general multigraded rings}

We now specify what we mean by an `ordinary multigraded commutative ring', in such generality as should suffice for all future purposes: 

\begin{defn} \label{defn:graded-ring}
Let $G$ be an abelian group, which we will write multiplicatively.
A \emph{$G$-graded} (or simply \emph{multigraded}) \emph{ring} is, of course, a ring together with a direct sum decomposition in homogeneous parts $R= \bigoplus_{x\in G}R_x$ such that $1\in R_1$ and $R_xR_y\subseteq R_{xy}$.
We say $R$ is a \emph{multigraded commutative} ring if it comes with a \emph{transposition rule}~$\tau$, that is a function $\tau\colon G\times G\to \mathrm Z(R)_1^\times$ into the abelian group of invertible central elements of degree one, satisfying:
\begin{itemize}
\item symmetry: $\tau(x,y)=\tau(y,x)$,
\item bilinearity, \ie written multiplicatively: $\tau(x,yz) = \tau(x,y)\tau(x,z)$.
\item transposition: $rs = \tau(|r|, |s|) sr$ for homogeneous $r\in R_{|r|}$ and $s\in R_{|s|}$.
\end{itemize}
\end{defn}

\begin{rmk}
The symmetry and bilinearity are analogous to the axioms of a symmetry for a monoidal category, and indeed they ensure that we may perform any sequence of transpositions (as in the third axiom) of adjacent elements in a product of several elements of~$R$, and the result will only depend on the overall permutation.
Typically $\tau$ is chosen to only take values in $\pm 1$ (as assumed for example in \cite{MR2995031}), but it does not have to be so.
However, since transposition implies both $\tau(|r|,|s|)^{-1} rs = sr$ and $\tau(|s|,|r|) rs = sr$, symmetry suggests we may want to assume from the start that $\tau(x,y)=\tau(x,y)^{-1}$, that is $\tau(x,y)^2=1$. 
\end{rmk}

If $R$ is any multigraded commutative ring as in \Cref{defn:graded-ring}, one can define homogeneous ideals (\ie two-sided ideals generated by homogeneous elements) and the Zariski spectrum 
\[
\Spch(R)
\]
of homogeneous primes precisely in the same way as for the more common $\mathbb Z$-graded commutative rings.
Note that the question whether a subset of $R$ is an ideal or a prime---and therefore the spectral space $\Spch(R)$---does not depend on the particular transposition rule~$\tau$ enjoyed by~$R$.

In the next definition (our most technical), one may wish to assume at first reading that $\pi$ is the identity map.

\begin{defn}[Tightening]
 \label{def:tightening}
Let $\cat R$ be any graded commutative 2-ring, let $\cat G=\2Pic(\cat R)$ be its grading 2-group, and let $\Kth_0(\cat G)$ be its abelian group of isomorphism classes $[g]$ of objects~$g$.
Let $\pi\colon G\twoheadrightarrow \Kth_0(\cat G)$ be a surjective morphism of abelian groups (for example the identity).
A \emph{tightening $R=(R, G, \pi, \{g_\gamma\}, \{\varphi_x\})$ of~$\cat R$} consists of a $G$-graded commutative ring $R=\bigoplus_{x\in G} R_x$, for some transposition rule~$\tau$ (\Cref{defn:graded-ring}), equipped with the choices of a complete set of representative objects $\{g_\gamma \}_{\gamma \in \Kth_0(\cat G)}$ for~$\Kth_0(\cat G)$ and of group isomorphisms
\[
\varphi_x \colon R_x \overset{\sim}{\to} \cat R (\unit, g_{\pi(x)}) \quad (x\in G),
\]
satisfying the following two conditions (where we write $\overline x:=\pi(x)$ for readability): 
\begin{enumerate}
\item
$g_1 = \unit$, and for every $x\in G$ and all $s \in R_1$ and $ r\in R_x$ we have
\[
\varphi_{x} (r s) = \varphi_{x} (r) \circ \varphi_1 (s) .
\]
In particular, with $x = 1$ we get a ring isomorphism $\varphi_1\colon R_1\overset{\sim}{\to} \End_\cat R(\unit)$.
\item
More generally, for any pair of homogeneous elements $r\in R_x$ and $s\in R_y$, the three morphisms of~$\cat R$ (the first and third hiding an unwritten unitor)
\[
(\varphi_{ x} (r) \otimes g_{\overline y}) \circ \varphi_{ y} (s)
\quad\quad
\varphi_{x}(r) \otimes \varphi_{y}(s) 
\quad\quad
(g_{\overline x} \otimes \varphi_{y}(s)) \circ \varphi_{x}(r)
\]
are all translates of $\varphi_{xy}(rs)$, as in \Cref{rmk:translate}. (In fact it suffices for one of them to be, since the three maps are all evident translates of one another.)
\end{enumerate}
\end{defn}

\begin{rmk}
Because of the asymmetry of the definition, these could more precisely be called \emph{right} tightenings, and of course there is an equivalent notion of left tightening. 
\end{rmk}

\begin{rmk}
The two compatibility conditions in \Cref{def:tightening} between the product of~$R$ and the composition of~$\cat R$ amount to the decent minimum one should ask if $R$ and $\cat R$ are to be deemed equivalent.
We could try to give a more structured definition, involving more choices and more axioms. 
We prefer to remain uncommitted so as to capture all possible solutions, and in any case this is enough for obtaining the expected comparison map of \Cref{cor:multigraded-comparison}.
\end{rmk}

\begin{rmk} \label{rmk:external-grading}
In our definition of a tightening,
the point of allowing morphisms $\pi$ which are \emph{not} the identity $G=\Kth_0(\2Pic(\cat R))$ is to capture how certain very common examples are usually treated, see \Cref{exa:original-graded}.
Besides, note that if $\pi\colon G\to \Kth_0(\cat G)$ is a group surjection and $R$ is a $\Kth_0(\cat G)$-graded ring, it is always possible to view $R$ as a $G$-graded ring by setting $R_x:=R_{\pi(x)}$ for all $x\in G$. 
Going the other way, however---to produce a $\Kth_0(\cat G)$-grading on a $G$-graded tightening---seems to necessarily involve choices and conditions which may fail.
Hence \emph{a~priori} our definition really is more general than if we only allowed $\pi=\Id$.
\end{rmk}

\begin{thrm} \label{thrm:agreement}
Let $R$ be a tightening of a graded commutative 2-ring~$\cat R$ as in \Cref{def:tightening}.
Then the evident extension and restriction of homogeneous ideals along the isomorphisms $\{\varphi_x\}_x$ of the tightening is a bijection and yields a homeomorphism
\[
\Spch(R) \cong \Spc(\cat R)
\]
between the Zariski spectra of the multigraded ring~$R$ and of the graded 2-ring~$\cat R$.
\end{thrm}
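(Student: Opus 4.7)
My plan is to exhibit an inclusion-preserving bijection between homogeneous ideals of the $G$-graded ring $R$ and homogeneous 2-ideals of the graded 2-ring~$\cat R$, which then restricts to a bijection on primes and identifies Zariski spectra as topological spaces. For the restriction, send a 2-ideal $\cat I\subseteq\cat R$ to
\[
\Psi(\cat I)\;:=\;\bigoplus_{x\in G}\varphi_x^{-1}\bigl(\cat I\cap\cat R(\unit,g_{\pi(x)})\bigr)\;\subseteq\;R;
\]
for the extension, send a homogeneous ideal $I\subseteq R$ to the 2-ideal $\Phi(I)\subseteq\cat R$ generated by $\{\varphi_x(r)\mid r\in I\cap R_x,\,x\in G\}$. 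The first task is to verify that $\Psi(\cat I)$ is actually a two-sided homogeneous ideal of $R$, and this uses condition~(2) of the tightening: given $r\in R_x$ with $\varphi_x(r)\in\cat I$ and any $s\in R_y$, the morphism $\varphi_{xy}(rs)$ is a translate of $(g_{\pi(x)}\otimes\varphi_y(s))\circ\varphi_x(r)$, which lies in $\cat I$ by post-composition followed by translation-closure, whence $rs\in\Psi(\cat I)$.

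The crux of the argument is showing that $\Psi$ and $\Phi$ are mutually inverse. For the direction $\Phi\circ\Psi=\mathrm{id}$, the key structural fact is that, since every object of $\cat R$ is invertible and every class in $\Kth_0(\cat G)$ is realized by one of the chosen representatives $g_\gamma$, every morphism $\alpha\colon g\to h$ is a translate (in the sense of \Cref{rmk:translate}) of a morphism $\unit\to g_\gamma$ with $\gamma=[h]-[g]$. By translation-closure, a 2-ideal is therefore determined by its intersections with the groups $\cat R(\unit,g_\gamma)$, and surjectivity of $\pi$ ensures these intersections are precisely encoded by $\Psi(\cat I)$. For the converse identity $\Psi\circ\Phi=\mathrm{id}$, the content reduces to the equality $\Phi(I)\cap\cat R(\unit,g_{\pi(x)})=\varphi_x(I\cap R_x)$ for every $x\in G$. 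The inclusion $\supseteq$ holds by construction. For $\subseteq$, expand an element of the left-hand side as a finite sum of morphisms of the form $\delta\circ(h\otimes\varphi_y(s))\circ\gamma$ with $s\in I\cap R_y$, $\gamma\in\cat R(\unit,h)$, and $\delta\in\cat R(h\otimes g_{\pi(y)},g_{\pi(x)})$; translating each of $\gamma$ and $\delta$ to morphisms out of $\unit$ and identifying them through the tightening with homogeneous elements $t\in R_z$ and $u\in R_w$ (chosen so that $wyz$ and $x$ have the same image under $\pi$), repeated application of condition~(2) rewrites each summand, up to translation within the slot $\cat R(\unit,g_{\pi(x)})$, as $\varphi_x(ust)\in\varphi_x(I\cap R_x)$, since $s\in I$.

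The last rewriting step is the main technical obstacle: one has to absorb arbitrary morphisms and objects appearing in a generic 2-ideal expression into products of homogeneous elements of $R$, tracking $G$-degrees consistently and handling the potential non-injectivity of $\pi$ (for which the tightening reserves the freedom of $\mathrm{ker}(\pi)$-twists via the central subring $R_1\cong\End_\cat R(\unit)$ provided by condition~(1)). Once the bijection is in place, primality transfers automatically because condition~(2) also identifies composition in $\cat R$, up to translation, with multiplication in $R$: a factorization $s\circ r\in\cat I$ can be translated so that $r$ and $s$ both issue from $\unit$ and then, via the tightening, be converted into a product $r's'\in\Psi(\cat I)$. Since both $\Psi$ and $\Phi$ are inclusion-preserving, they carry closed subsets $V(\cat I)\subseteq\Spc(\cat R)$ to closed subsets $V(\Psi(\cat I))\subseteq\Spch(R)$ and back, which delivers the claimed homeomorphism.
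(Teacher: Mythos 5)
Your proposal is essentially the paper's proof: the same restriction and extension maps, the same use of axiom~(2) of \Cref{def:tightening} to verify that the restriction is an ideal, and the same reduction of the hard inclusion $\Psi(\Phi(I))\subseteq I$ to rewriting generators of the generated 2-ideal as translates of $\varphi$ applied to products of elements of~$I$. The step you flag as ``the main technical obstacle'' is precisely where the paper does its real work: after bringing a generator into the slot $\cat R(\unit,g_{\pi(x)})$, the residual discrepancy is a post-composed automorphism $w$ of $g_{\pi(x)}$, and \Cref{lem:magic} converts it into a pre-composed automorphism $\tilde w$ of~$\unit$, which axiom~(1) then absorbs as a unit of $R_1$ multiplying the element of~$I$; your appeal to ``$\ker(\pi)$-twists via $R_1\cong\End_{\cat R}(\unit)$'' names the right mechanism but does not execute it. The only genuine divergence is the transfer of primality: you propose the direct element-wise argument (translate a factorization $s\circ r$ so both maps issue from $\unit$ and convert it to a product in~$R$), which the paper explicitly says works but declines as tedious, opting instead for the commutative-ideal-lattice formalism of~\cite{MR2280286}, which reduces everything to the single identity $\mathrm e(IJ)=\mathrm e(I)\circ\mathrm e(J)$.
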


We begin with a couple of lemmas:

\begin{lem} \label{lem:magic}
In any tensor category, let $a,b \colon \unit \to g$ be two morphisms from the unit to the same invertible object~$g$. 
Then we have the equivalence
\[
\vcenter{
\xymatrix@R=5pt{
& g \ar[dd]^-w \\
\unit \ar[ur]^-a \ar[dr]_b & \\
& g
}}
\quad \Leftrightarrow \quad
\vcenter{
\xymatrix@R=5pt{
\unit \ar[dr]^-{a} &   \\
 & g \\
\unit \ar[ur]_-{b} \ar[uu]^{\tilde w} & 
}}
\]
for any endomorphism $w$ of $g$, with $\tilde w$ denoting the corresponding endomorphism 
\[
\xymatrix{
\tilde w \colon \unit \cong g^{-1} \otimes g
	 \ar[r]^-{g^{-1} \otimes w} & 
g^{-1} \otimes g \cong \unit
}
\]
of the tensor unit. 
Note also that $w$ is invertible iff $\tilde w$ is.
\end{lem}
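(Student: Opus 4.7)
The plan is to reduce the stated equivalence to the single identity $w \circ a = a \circ \tilde w$ of morphisms $\unit \to g$, after which both directions of the equivalence become tautological.

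First I would exploit the fact that $g$ is invertible to pin down the correspondence between $w$ and $\tilde w$ at the level of endomorphism rings. Since $g \otimes (-)$ is an auto-equivalence of the ambient tensor category with pseudo-inverse $g^{-1} \otimes (-)$, it restricts to a group isomorphism $\End(\unit) \xrightarrow{\sim} \End(g)$. Writing $\eta \colon \unit \xrightarrow{\sim} g^{-1} \otimes g$ for the chosen iso and $\rho \colon g \otimes \unit \xrightarrow{\sim} g$ for the right unitor, the construction of $\tilde w$ in the lemma is designed precisely so that
\[
w \circ \rho \;=\; \rho \circ (g \otimes \tilde w).
\]
This identity is obtained by tensoring the defining equation $\tilde w = \eta^{-1} \circ (g^{-1} \otimes w) \circ \eta$ on the left by $g$ and collapsing $g \otimes \eta$ against the triangle identity for the invertibility data of~$g$.

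Second, I would deduce the key identity $w \circ a = a \circ \tilde w$ via the interchange law. Identifying $a \colon \unit \to g$ with $\rho \circ (a \otimes \unit_\unit)$ up to the coherence $\unit \otimes \unit \cong \unit$, one computes
\[
w \circ a \;=\; w \circ \rho \circ (a \otimes \unit_\unit) \;=\; \rho \circ (g \otimes \tilde w) \circ (a \otimes \unit_\unit) \;=\; \rho \circ (a \otimes \tilde w) \;=\; a \circ \tilde w,
\]
invoking the previous paragraph in the second step, the interchange law in the third, and absorbing the unitor on the other side in the last.

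Finally, the equivalence in the statement now reads ``$b = w \circ a$ iff $b = a \circ \tilde w$'', which is immediate from the identity just established. The supplementary claim that $w$ is invertible iff $\tilde w$ is, follows from the first step, since they correspond under the ring isomorphism $\End(\unit) \cong \End(g)$ induced by~$g \otimes (-)$. The only real obstacle is the unitor and coherence bookkeeping in the first step, which amounts to an application of the triangle identity for the invertibility data of~$g$; this is routine but notationally somewhat delicate, and I would write it out as a small preparatory sublemma rather than interrupt the main flow.
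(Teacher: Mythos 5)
Your proposal is correct and matches the paper's approach: the appendix's proof likewise reduces the equivalence to the single identity $wa = a\tilde w$ (which it dismisses as a routine exercise in adjoint equivalences, deferring the coherence bookkeeping to the proof of Proposition~2.9 of the cited Dell'Ambrogio--Stevenson paper). You simply spell out that routine computation explicitly via the zig-zag identity and the interchange law, which is fine.
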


\begin{proof}
This is a routine exercise in adjoint equivalences. 
(Alternatively: the equivalent claim that $wa = a \tilde w$ with $\tilde w = w \otimes g^{-1}$ follows precisely by the proof of \cite[Prop.\,2.9]{MR3163513} applied to $r:=(a\colon \unit \to g)$ and $s:=(w\colon g\to g)$. In fact it suffices to stare at the picture (2.10) in \emph{loc.\,cit.\ }keeping monoidal coherence in mind.)
\end{proof}

\begin{lem} \label{lem:translates-from-1}
Consider two morphisms $a\colon \unit \to g$ and $b\colon \unit \to h$ out of the unit in a graded commutative 2-ring~$\cat R$. 
Then $a$ and $b$ are translates of each other iff they are isomorphic as morphisms, iff there exists an isomorphism $w\colon g\overset{\sim}{\to} h$ with $wa=b$.
\end{lem}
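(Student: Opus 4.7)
The plan is to establish the equivalence of three conditions: (T) $a$ and $b$ are translates; (I) $a$ and $b$ are isomorphic as morphisms of $\cat R$; (W) there exists an isomorphism $w\colon g\to h$ with $wa=b$. Two of the implications come for free: (W)$\Rightarrow$(I) by using $\id_\unit$ as the source component of the arrow-category iso, and (W)$\Rightarrow$(T) because post-composition with an isomorphism is one of the elementary operations appearing in the definition of a translate. The real work will lie in (I)$\Rightarrow$(W) and (T)$\Rightarrow$(W).

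For (I)$\Rightarrow$(W), suppose we are given isos $u\colon \unit\to\unit$ and $w_0\colon g\to h$ with $w_0 a = bu$. The strategy will be to absorb the source automorphism $u$ into an automorphism of the target. I would apply Lemma~\ref{lem:magic} to the pair of parallel morphisms $b,bu\colon \unit\to h$: it produces an automorphism $w_1$ of $h$ with $w_1 b = bu$, invertible because $u$ is. Combining with the hypothesis gives $w_0 a = w_1 b$, so $w:=w_1^{-1}w_0$ is the required iso $g\to h$ with $wa=b$.

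For (T)$\Rightarrow$(W), I would unfold the explicit characterisation $b = v\circ (k\otimes a)\circ u$ for some object $k\in\cat R$ and isomorphisms $u\colon \unit\to k\otimes\unit$ and $v\colon k\otimes g\to h$. The existence of the iso $u$ forces $k\otimes\unit\cong\unit$ and hence $k\cong\unit$ via the right unitor; making this explicit, set $\alpha:=\rho_k\circ u\colon \unit\to k$. Then the candidate isomorphism is $w:= v\circ(\alpha\otimes g)\circ\lambda_g^{-1}\colon g\to h$, and verifying $wa=b$ reduces to a short coherence chase using naturality of $\lambda$ and $\rho$ together with the standard identity $\rho_\unit=\lambda_\unit$.

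The hard part, such as it is, will be this final coherence calculation, though nothing non-formal happens there. The real conceptual content is simply that the twist object $k$ in a translate of a morphism out of $\unit$ must itself be isomorphic to $\unit$, which is what lets one replace the \emph{a priori} twisted isomorphism $v$ by a straight iso $g\to h$.
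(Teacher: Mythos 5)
Your proof is correct and follows essentially the same route as the paper: the substance is the implication from ``translate'' to the existence of $w$, handled by reducing to a single twist $b = v\circ(k\otimes a)\circ u$, observing that $u$ forces $k\cong\unit$, and taking $w = v\circ(\tilde u\otimes g)\circ\lambda_g^{-1}$ exactly as in the paper's diagram. Your separate treatment of (I)$\Rightarrow$(W) via \Cref{lem:magic} is correct but redundant, since (I)$\Rightarrow$(T) is immediate from the definition of translation and is then absorbed by the main implication.
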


\begin{proof}
Clearly it suffices to show that if $b$ is a translate of~$a$ then there exists  an isomorphism $w\colon g\overset{\sim}{\to} h$ with $wa=b$.
Any translate $b$ of $a$ is isomorphic to a single twist of~$a$, that is
\[ 
b = v \circ (k \otimes a) \circ u 
\]
for some object~$k$ and some isomorphisms $u\colon \unit \overset{\sim}{\to} k\otimes \unit$ and $v\colon k\otimes g \overset{\sim}{\to} h$.
The claim then follows immediately from the commutativity of the diagram
\[
\xymatrix{
& \unit
	 \ar@/_1ex/[dl]_{\tilde u} ^{\simeq}
	 \ar[r]^-{b} \ar[d]_u^\simeq  & 
h
	 \ar@{<-}[d]_\simeq^{v}
	 \ar@/^9ex/@{<-}[ddd]^{=: w} \\
k & k\otimes \unit
	 \ar[l]^-\simeq 
	 \ar[r]^-{k\otimes a} & 
k\otimes g \\
\unit
 	\ar@/_1ex/@{=}[dr] 
	\ar[u]^{\tilde u}_\simeq & 
\unit \otimes \unit 
	\ar[l]^-\simeq 
	\ar[d]^\simeq 
	\ar[u]^{\tilde u \otimes \unit}_\simeq 
	\ar[r]^-{\unit \otimes a} & 
\unit \otimes g
	 \ar[u]^-\simeq_{\tilde u \otimes g}
	 \ar[d]_\simeq \\
& \unit \ar[r]^-a & g
}
\]
where $\tilde u\colon \unit \overset{u}{\to} k \otimes \unit \cong k$.
\end{proof}

\begin{proof}[Proof of \Cref{thrm:agreement}.] This is similar to \cite[Lemma~4.1]{MR3163513} but more involved.
Write $R^\hmg= \coprod_{x \in G} R_x$ for the set of homogeneous elements of the multigraded ring~$R$, and denote by $\varphi:=\sqcup_{x} \varphi_{x\in G}\colon R^\hmg \to \cat R$ the (possibly noninjective) map provided with the tightening which identifies homogeneous elements with morphisms of~$\cat R$.

 If $\cat I\subseteq \cat R$ is a homogeneous ideal, its \emph{restriction} $\mathrm r(\cat I)\subseteq R$ is the additive closure in $R$ of the subset $\varphi^{-1} (\cat I) \subseteq R^\hmg$. 
Let us check that $\mathrm r(\cat I)$ is an ideal of~$R$. 
By multigraded-commutativity, it clearly suffices to see that the set $\mathrm r(\cat I)\cap R^\hmg = \coprod_{x \in G}\varphi^{-1}_{x}\cat I(\unit, g_{\overline x})$ is closed under multiplication on one side with arbitrary homogenous elements. 
Therefore consider $r \in \mathrm r(\cat I)\cap R_x$ and $s \in R_y$.
We want to show that $rs\in \mathrm r(\cat I)$, that is $\varphi(rs) \in \cat I$.
By the second axiom of a tightening, we know that $\varphi(rs)$ is a translate of $(g_{\overline x} \otimes \varphi(s))\circ  \varphi(r)$.
Since $\varphi(r)\in \cat I$ and since homogeneous ideals of~$\cat R$ are closed under translation and composition with arbitrary maps, we conclude that $\varphi(rs)\in \cat I$ as wished.

In the opposite direction, the  \emph{extension} $\mathrm e(I)$ of a homogeneous ideal $I\subseteq R$ is the homogeneous ideal of $\cat R$ generated by the set $\varphi(I \cap R^\hmg)$.

Clearly we have $\mathrm e(\mathrm r(\cat I)) \subseteq \cat I$.
For the other inclusion, note that every $a\in \cat I$ admits a translate of the form $\tilde a \colon \unit \to g_{\overline x}$ for some~$x\in G$, which still belongs to~$\cat I$ (since homogeneous ideals of $\cat R$ are translation-invariant) and therefore also to $\varphi(\mathrm r(\cat I)\cap R^\hmg)$ and thus to $\mathrm e(\mathrm r(\cat I))$. 
We deduce that $a\in \mathrm e(\mathrm r(\cat I))$, this time by the translation-invariance of~$\mathrm e(\mathrm r(\cat I))$. 
Hence $\mathrm e(\mathrm r(\cat I)) \supseteq \cat I$ and $\mathrm e(\mathrm r(\cat I)) = \cat I$.

On the other hand, starting with an ideal $I$ of~$R$, we always have $I\subseteq \mathrm r(\mathrm e(I))$ since $I$ is additively generated by its homogeneous elements.
For the reverse inclusion, let $r\in \mathrm r(\mathrm e (I))$. 
We need to show that $r\in I$, and for this we may assume that $r$ is homogenous.
Then by construction $\varphi(r) \in \mathrm e(I) (\unit , g_{\overline x})$ of some degree $x = |r|$.
Since by definition $\mathrm e (I)$ is the ideal of $\cat R$ generated by $\varphi(I\cap R^\hmg)$, we may use 
\cite[Lem.\,2.14 and Prop.\,2.15]{MR3163513} to give it a more precise description. 
To wit, $\mathrm e(I) (\unit , g_{\overline x})$ consists precisely of finite sums of morphisms of the form
\begin{equation*} \xymatrix{\unit \ar[r]^-b & \ell \otimes \unit \ar[r]^-{\ell \otimes a} & \ell \otimes g_{\overline y} \ar[r]^-{v}_-\simeq & g_{\overline x} }
\end{equation*}
where $a \in \varphi(I\cap R^\hmg)$---that is $a=\varphi(i)$ for some $i\in I\cap R_y$---and where $\ell$ is some object, $v$ some isomorphism, and $b$ some other morphism (not necessarily invertible).
Hence we may assume that $\varphi(r)$ has the above form.
Writing $\epsilon = [\ell]$, we may moreover suppose the composite has the following more specific form:
\begin{equation*} \xymatrix{\varphi(r) \colon \quad \unit \ar[r]^-b & g_\epsilon \otimes \unit \ar[r]^-{g_{\epsilon} \otimes a} & g_\epsilon \otimes g_{\overline y} \ar[r]^-{v}_-\simeq & g_{\overline x} }
\end{equation*}
We may also write $b$ as the composite $\unit \overset{\varphi(s)}{\longrightarrow} g_{\overline z} \cong g_{\overline z} \otimes \unit$ for some $s\in R_z$ with $\overline z=\epsilon$.
Therefore we can deduce from the second axiom of a tightening that the map $(g_\epsilon \otimes a)b=(g_{\overline z} \otimes \varphi(i) )\circ  \varphi(s)$ is a translate of $\varphi(si)$, with $si\in I\cap R_{zy}$ since $I\subseteq R$ is an ideal.
As both translates have source~$\unit$, by \Cref{lem:translates-from-1} there exists an isomorphism $u\colon g_{\overline z}\otimes g_{\overline y} \overset{\sim}{\to} g_{\overline{zy}}$ such that 
$u \circ (g_\epsilon \otimes a)b = \varphi(si)$. 
Altogether, we have the following commutative diagram
\[
\xymatrix@R=10pt{
& g_{\overline z} \otimes g_{\overline y} \ar[dr]_-\simeq^-{v} \ar[dd]_u^\simeq  & \\
\unit \ar[ur]^-{(g_{\overline z} \otimes a)b} \ar[dr]_{\varphi(si)} &  & g_{\overline x} \\
& g_{{\overline {zy}}} \ar@{-->}[ur]_-{vu^{-1}=:w}^-\simeq &
}
\]
where the top composite $\unit \to g_{\overline x}$ is $\varphi(r)$.
In particular, because of the isomorphism $w$ we see that ${\overline z}{\overline y}={\overline x}$ and therefore $g_{\overline z \overline y} = g_{\overline x}$, by the uniqueness of the chosen representatives for~$\Kth_0(\cat G)$.
Hence $\varphi(r)$ and $\varphi(si)$ have the same target object, and $w\circ \varphi(si) = \varphi(r)$. 
Therefore we deduce from \Cref{lem:magic} that $\varphi(si)\circ \tilde w = \varphi(r)$ for an automorphism $\tilde w$ of~$\unit$.
Since $\tilde w = \varphi(t)$ for some $t\in R_1$, we further deduce from the first axiom of a tightening that 
$\varphi(r)= \varphi(si) \varphi(t) = \varphi(sit)$.
Since $I $ is an ideal and $i\in I$, we conclude that $r= sit \in I$ too, as wished.

This completes the proof of the remaining inclusion $\mathrm r(\mathrm e(I)) \subseteq I$, establishing $\mathrm e(-)$ and~$\mathrm r(-)$ as mutually inverse inclusion-preserving bijections between the respective sets of homogenous ideals.

In order to prove the theorem, it remains to verify that the bijection restricts to prime ideals on both sides.
This can be proved by a similar element-wise (and tedious) argument.
Alternatively and more conceptually, we can exploit the fact that both spaces $\Spch(R)$ and $\Spc(\cat R)$ are the spectrum associated to a \emph{commutative ideal lattice} as in~\cite{MR2280286}. 
We have already established an isomorphism of the underlying ideal lattices, so we only need to show that it preserves the corresponding product operations, namely that $\mathrm e(IJ) = \mathrm e(I)\circ \mathrm e(J)$.   
Here $IJ$ is the usual ideal product in~$R$ and $\cat I\circ \cat J$ is the composition-product in~$\cat R$ (\cite[\S2.3]{MR3163513}).
By \cite[Lem.\,2.18]{MR3163513}, the latter is equal to $\langle \cat I \otimes \cat J\rangle$, the homogeneous ideal generated by all tensor products $a\otimes b$ with $a\in \cat I$ and $b\in \cat J$.
Now we can easily check the desired equality $\mathrm e(IJ) = \langle \mathrm e(I) \otimes \mathrm e(J)\rangle$ using the second axiom of tightenings (namely that $\varphi(rs)$ and $\varphi(r)\otimes \varphi(s)$ are translates) and translation-invariance of the homogeneous ideals of~$\cat R$. 
\end{proof}

\begin{cor} \label{cor:multigraded-comparison}
Let $\cat R_\cat M\subset \cat K$ be the central $\cat M$-graded 2-ring of~$\cat K$ associated with the sub-2-monoid $\cat M\subset \2Pic(\cat K)$.
Suppose the multigraded commutative ring $R$ is a tightening of $\cat R_\cat M$ in the sense of \Cref{def:tightening}, and write $\varphi$ for the map identifying the elements of $R$ as morphisms in $\cat R_\cat M \subset \cat K$. 
Under the homeomorphism of \Cref{thrm:agreement}, the comparison map of \Cref{thrm:comparison-map} yields a continuous spectral map 
\[
\rho=\rho_{\cat K, R} \colon \Spc(\cat K) \to \Spch(R)
\]
sending a tt-prime $\cat P$ of~$\cat K$ to the prime $\rho(\cat P) = \{r\in R \mid \cone(\varphi (r)) \not\in \cat P\}$ of~$R$.
\qed
\end{cor}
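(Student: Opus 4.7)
The plan is to simply compose the two continuous spectral maps already established. By \Cref{thrm:comparison-map} we have a continuous spectral map
\[
\rho_{\cat K, \cat M}\colon \Spc(\cat K)\to \Spc(\cat R_\cat M),\qquad \cat P\mapsto \{a\in \cat R_\cat M\mid \cone(a)\notin \cat P\},
\]
and by \Cref{thrm:agreement} we have a homeomorphism $\Spch(R)\cong \Spc(\cat R_\cat M)$ given by restriction/extension of homogeneous ideals along the map $\varphi$ supplied by the tightening. Composing $\rho_{\cat K,\cat M}$ with the inverse of this homeomorphism yields a continuous spectral map $\rho_{\cat K, R}\colon \Spc(\cat K)\to \Spch(R)$.

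It remains to identify this composition with the explicit formula $\rho(\cat P)=\{r\in R\mid \cone(\varphi(r))\notin \cat P\}$. This is a matter of unwinding. By the proof of \Cref{thrm:agreement}, the prime of $R$ corresponding to a homogeneous prime $\mathfrak{q}\subseteq \cat R_\cat M$ is the restriction $\mathrm r(\mathfrak{q})$, namely the additive closure of $\varphi^{-1}(\mathfrak{q})\subseteq R^\hmg$. Applied to $\mathfrak{q}=\rho_{\cat K,\cat M}(\cat P)$, this gives the ideal additively generated by
\[
\bigl\{r\in R^\hmg\ \bigm|\ \varphi(r)\in \rho_{\cat K,\cat M}(\cat P)\bigr\}=\bigl\{r\in R^\hmg \ \bigm| \ \cone(\varphi(r))\notin \cat P\bigr\},
\]
which is precisely the prime written in the statement once one reads the displayed set in the usual graded sense (homogeneous elements, with additive closure implicit, since any homogeneous ideal of~$R$ is determined by its homogeneous part). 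There is no real obstacle: everything is definitional once \Cref{thrm:comparison-map,thrm:agreement} are in hand, and the only minor care needed is to interpret $\varphi(r)$ on homogeneous~$r$ and to invoke the fact, spelled out in the proof of \Cref{thrm:agreement}, that restriction of prime ideals along~$\varphi$ is well-defined and compatible with the ideal structure.
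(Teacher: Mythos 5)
Your proposal is correct and matches the paper's intent exactly: the corollary carries a \qed in the statement because it is precisely the composition of the comparison map from \Cref{thrm:comparison-map} with the restriction half of the ideal bijection from \Cref{thrm:agreement}, and your unwinding of $\mathrm r(\rho_{\cat K,\cat M}(\cat P))$ to the displayed formula (read on homogeneous elements with additive closure implicit) is the intended identification.
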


\begin{rmk}
Note that a tightening~$R$ of the central $\cat M$-graded 2-ring $\cat R_\cat M$ must have $R_x=0$ whenever $\pi(x)=[g]$ for $g\not\in \cat M$, so that~$R$ could actually be viewed as being graded by the submonoid $\pi^{-1}(\Kth_0(\cat M))$ of the abelian group~$G$.
\end{rmk}

\begin{exa} \label{exa:original-ungraded}
Obviously, the minimal central 2-ring $\cat R_{\{\unit\}}$ (\Cref{exa:basic-graded-2-rings}) always admits a unique tightening for which $\varphi_1$ is the identity morphism of $\End_\cat K(\unit)$.
The resulting comparison map is Balmer's original ungraded one.
\end{exa}

\begin{exa} \label{exa:original-graded}
Recall from \Cref{exa:basic-graded-2-rings} the full cyclic central 2-ring $\cat R_\omega$ twisted by an invertible object $\omega\in \cat K$. 
Its grading 2-group is $\cat G =\{\omega^{\otimes n}\mid n\in \mathbb Z\}$, for which $\Kth_0(\cat G)$ is a  (finite or infinite) cyclic group.
Balmer \cite{balmer:sss} defines a $\mathbb Z$-graded commutative ring $\mathrm R^\omega_\cat K=\End_\cat K^{*,\omega}(\unit)$ with homogeneous components 
 $ \mathrm R^\omega_n := \cat K(\unit , \omega^{\otimes n})$ $(n\in \mathbb Z)$
and transposition rule $\tau\colon \mathbb Z\times \mathbb Z\to \End_\cat K(\unit)^\times$ given by $\tau(n,m)= \epsilon^{nm}$, where $\epsilon\in \End(\unit)^\times$ is the \emph{central switch} of~$\omega$ (\ie the unique element whose action on $\omega\otimes \omega$ equals the tensor symmetry switch).
We can easily construct a tightening of $\cat R_\omega$ whose graded ring is $\mathrm R^\omega_{\cat K}$, in the following way. 
For the group morphism $\pi\colon \mathbb Z\twoheadrightarrow \Kth_0(\cat G)$ we map $n$ to~$\pi(n)=[\omega^{\otimes n}]$.
Now let $\ker(\pi)= p\mathbb Z$ where $p\geq 0$ is the periodicity of~$\omega$ (its order in~$\Kth_0(\cat G)$), and choose~(!) an isomorphism $\alpha \colon \omega^{\otimes p} \overset{\sim}{\to}  \unit$  (if $p=0$ we can simply take $\alpha = \Id$). 
For each $n\in \mathbb Z$, write $n=n' + pk$ with $0\leq n' < p$, define $g_{\pi(n)}:= \omega^{\otimes n'}$ to be the representative object of the class~$[\omega^{\otimes n}]$, and define the isomorphisms~$\varphi_n$ as
\[
\xymatrix{
\varphi_n\colon \mathrm (R^\omega_{\cat K})_n  = \cat K(\unit, \omega^{\otimes n'}\otimes (\omega^{\otimes p})^{\otimes k}) 
	 \ar[rr]^-{\Id \otimes \alpha^{\otimes k}}_-\simeq &&
\cat K(\unit, \omega^{\otimes n'}) = \cat K(\unit , g_{\pi(n)}) .
}
\]
That this defines a tightening is immediate from the formula for the product in~$\mathrm R^\omega_\cat K$.
We also directly see from the explicit formulas defining both maps that the tightening identifies the comparison map constructed in \cite{balmer:sss}, whose target is $\Spch(\mathrm R^\omega_\cat K)$, with the comparison map of \Cref{cor:multigraded-comparison} for the central 2-ring~$\cat R_\omega$.
This example has (among others) two evident variants, where on the ring side we take the connective or co\-conn\-ect\-ive subring of $\mathrm R^\omega_\cat K$ and on the 2-ring side the corresponding (co)conn\-ect\-ed cyclic central 2-ring $\cat R_\omega^-$ (resp.\,$\cat R_\omega^+$).
\end{exa}

\begin{rmk} [Free tightenings for all]
\Cref{exa:original-graded} can be generalized to any finite list of invertible objects $\omega_1,\ldots,\omega_r$ to produce a $\mathbb Z^r$-graded commutative ring whose homogeneous component at $(d_1,\ldots,d_r)\in \mathbb Z^r$ is $\cat K(\unit, \omega_1^{\otimes d_1} \otimes \ldots \otimes \omega_r^{\otimes d_r})$.
Taking colimits, this can be used to further show that every central graded 2-ring $\cat R_\cat M$ of~$\cat K$ admits a tightening whose grading group is (possibly infinite) free abelian; moreover, this construction essentially only depends on the choice of a generating set of objects for~$\cat M$ and a linear ordering on it.
(We omit the proof as it is quite lengthy and the result is mostly irrelevant to this article.)
We do not know, however, how to always find a tightening of $\cat R_\cat M$ with grading group~$G = \Kth_0(\overline{\cat M})$, and we suspect it may not be possible in general.
\end{rmk}

\subsection{Central localization}

To conclude this appendix, we briefly recall from \cite{MR3163513} the notion of central localization of $\cat K$ at multiplicative systems of central 2-rings, show how it also applies to \emph{2-monoids}-graded central 2-rings---as introduced here---and explain how it specializes the homonymous procedure introduced in~\cite{balmer:sss}.

Let $\cat R$ be any graded commutative 2-ring. 
A subset $S$ of morphisms of $\cat R$ is a \emph{(homogeneous) multiplicative system} of~$\cat R$ if it contains all isomorphisms and is closed under composition and twists; or equivalently, $S$ contains all identities and is closed under composition and translation (see~\Cref{rmk:translate}).
The localization $\cat R\to S^{-1}\cat R$ (in the sense of general categories) of $\cat R$ at a multiplicative system~$S$ enjoys a two-sided calculus of fractions and yields again a graded commutative 2-ring~$S^{-1}\cat R$. In particular, we can form the localization $\cat R_\mathfrak p := S_{\mathfrak p}^{-1}\cat R$ at any prime $\mathfrak p\in \Spc(\cat R)$ since $S_\mathfrak p= \mathrm{Mor}(\cat R)\smallsetminus \mathfrak p$ is a multiplicative system. This and more, generalizing familiar results from basic commutative algebra, can be found in \cite[\S2.5-6]{MR3163513}.

Let $\cat A$ be an \emph{$\cat R$-algebra}, by which we mean a preadditive (symmetric) tensor category $\cat A$ equipped with an additive tensor functor $F\colon \cat R\to \cat A$.
If  $S$ is a multiplicative system of~$\cat R$, its \emph{extension to~$\cat A$}, denoted~$S_\cat A$, is the smallest class of morphisms in $\cat A$ which contains $F(S)$ and all isomorphisms of $\cat A$ and which is closed under composition and under twists by arbitrary objects of~$\cat A$.
As in \cite{MR3163513}, we write $S^{-1}_\cat A\cat A$ for the resulting localized category.

\begin{defn} [Central localizations of $\cat K$]
\label{defn:central-locs-of-K}
Let $\cat R_\cat M$ be the central 2-ring associated to a full sub-2-monoid $\cat M\subseteq \2Pic(\cat K)$ (\Cref{def:general-central-2-rings}).
Let $S \subseteq \cat R_\cat M$ be any homogeneous multiplicative system as above. 
Then we can view $\cat K$ as an $\cat R_\cat M$-algebra, thanks to the inclusion functor $\cat R_\cat M\hookrightarrow \cat K$, and we can localize it at $S$ as such.
The resulting category $S^{-1}_\cat K\cat K$ is the \emph{central localization of $\cat K$ at~$S$}.
\end{defn}

Our next goal is to establish the good properties of~$S^{-1}_\cat K\cat K$.
In particular, it is a tt-category whose central 2-ring spanned by the objects of $\cat M$ is~$S^{-1}\cat R_\cat M$.

\begin{rmk} \label{rmk:enlarge-R}
If $\cat R \subseteq \cat R'$ is the inclusion of a sub-2-ring~$\cat R$ containing all objects and all isomorphisms of~$\cat R'$, each homogeneous multiplicative system of $\cat R$ is evidently also a homogeneous multiplicative system of~$\cat R'$.
In particular in \Cref{defn:central-locs-of-K} we may replace the 2-monoid~$\cat M$ with~$\overline{\cat M}$, and therefore the central 2-ring $\cat R_\cat M$ with $\cat R_{\overline{\cat M}}$, without affecting the resulting central localization~$S^{-1}_\cat K\cat K$.
\end{rmk}

\begin{thrm} \label{thrm:central-loc}
Let $\cat R_\cat M$ be the $\cat M$-graded central 2-ring of $\cat K$ associated to a sub-2-monoid $\cat M\subseteq \2Pic(\cat K)$.
Let $S$ be any homogeneous multiplicative system of $\cat R_\cat M$.
Then: 
\begin{enumerate}
\item
There is a canonical isomorphism of categories, which is the identity on objects, between the central localization of $\cat K$ at $S$ (\Cref{defn:central-locs-of-K}) and its Verdier quotient by the thick tensor ideal $\cat J_S$ generated by the cones of morphisms in~$S$:
\[
\xymatrix@R=10pt@C=10pt{
&\cat K \ar[dl]_-{\mathrm{loc.}} \ar[dr]^-{\mathrm{quot.}} & \\
S_\cat K^{-1} \cat K \ar[rr]^-{\simeq} && \cat K/ \cat J_{S}
}
\]
\item 
Write $\cat M_S \subseteq \2Pic(S^{-1}_\cat K \cat K)$ for the essential image of~$\cat M$, \ie the replete full sub-2-monoid spanned by the objects of~$\cat M$.
Then $\cat R_{\cat M_S}$, the $\cat M_S$-graded central 2-ring of $S^{-1}_\cat K\cat K$, is canonically equivalent to~$S^{-1}\cat R_\cat M$, the localization of~$\cat R_\cat M$ as a 2-ring.
\end{enumerate}
\end{thrm}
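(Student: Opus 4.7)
My plan is to leverage the corresponding results from~\cite{MR3163513}, which established analogous statements for full central 2-rings (\ie when $\cat M = \overline{\cat M}$), and to bridge the gap to the present greater generality via \Cref{rmk:enlarge-R}.

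By \Cref{rmk:enlarge-R}, the central localization $S^{-1}_\cat K\cat K$ of \Cref{defn:central-locs-of-K} depends only on~$S$ as a class of morphisms, not on the ambient 2-ring; similarly, the tt-ideal $\cat J_S$ depends only on~$S$. Hence part~(1) reduces immediately to the case $\cat M = \overline{\cat M}$, where the statement is known. A direct argument is also transparent: mutually inverse functors $S^{-1}_\cat K\cat K \rightleftarrows \cat K/\cat J_S$ arise from two universal properties, since the Verdier quotient inverts each $s \in S$ (because $\cone(s) \in \cat J_S$) while conversely $S^{-1}_\cat K\cat K$ kills every cone of an element of~$S$ and, by closure under tensor product and extensions, kills the whole tt-ideal they generate.

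For part~(2), I would construct a canonical comparison functor $\Phi\colon S^{-1}\cat R_\cat M \to \cat R_{\cat M_S}$ by applying the universal property of $\cat R_\cat M \to S^{-1}\cat R_\cat M$ to the composite $\cat R_\cat M \hookrightarrow \cat K \to S^{-1}_\cat K\cat K$; by construction, $\Phi$ lands in~$\cat R_{\cat M_S}$. Essential surjectivity is immediate: any object of $\cat R_{\cat M_S}$ lies in $\overline{\cat M_S}$, hence is isomorphic in $S^{-1}_\cat K\cat K$ to some $m \otimes m'^{-1}$ with $m, m' \in \cat M$, which lifts to an object of $\overline{\cat M}$ and thus of $S^{-1}\cat R_\cat M$.

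The hard part, and the main obstacle, is showing that $\Phi$ is fully faithful. For $g, h \in \overline{\cat M}$ with $h \otimes g^{-1} \in \cat M_S$, every morphism in $S^{-1}_\cat K\cat K(g,h)$ must be representable by a left-fraction $g \xleftarrow{s} g' \xrightarrow{f} h$ with $s \in S$ and $f \in \cat R_\cat M$, and two such fractions must be shown equivalent via a common refinement inside $\cat R_\cat M$. I would establish this by combining the two-sided calculus of fractions in $S^{-1}_\cat K\cat K$ with the explicit description of $S_\cat K$ as the closure of~$S$ under composition with isomorphisms and under twists by arbitrary objects of~$\cat K$: a generic roof can be systematically modified---by pre- and post-composing with suitable twists and isomorphisms and possibly enlarging~$g'$---until both legs lie in $\cat R_\cat M$. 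The translation calculus of~\Cref{rmk:translate} and the graded-commutativity manipulations already developed in~\cite{MR3163513} should make this bookkeeping tractable.
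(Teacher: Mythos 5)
Your treatment of part~(1) is correct and coincides with the paper's: both reduce via \Cref{rmk:enlarge-R} to the full case $\cat M=\overline{\cat M}$, which is \cite[Thm.~3.6]{MR3163513}. The problem is part~(2), where you correctly identify full faithfulness of $\Phi\colon S^{-1}\cat R_\cat M\to\cat R_{\cat M_S}$ as the main obstacle but then propose to attack it by manipulating roofs directly in $S^{-1}_\cat K\cat K$. That is where the gap lies. A morphism of $\cat R_{\cat M_S}(g,h)$ is a morphism of the localized \emph{tt-category}, hence a fraction whose denominator lives in $S_\cat K$ — the closure of $S$ under twists by \emph{arbitrary} (non-invertible) objects of $\cat K$ and under composition — with apex an arbitrary object of $\cat K$. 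Reducing such a roof to one with both legs in $\cat R_\cat M$ is precisely the hard content of \cite[Thm.~3.6]{MR3163513}; your sketch ("systematically modified by pre- and post-composing with suitable twists\dots") does not supply that argument, it restates the difficulty. You would in effect be reproving the cited theorem rather than using it.

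The paper sidesteps this by interposing the full central 2-ring: the full case, applied to $\cat R_{\overline{\cat M}}$, already identifies the full central 2-ring of $S^{-1}_\cat K\cat K$ on the objects of $\overline{\cat M}$ with $S^{-1}\cat R_{\overline{\cat M}}$. What remains is then a purely 2-ring-theoretic comparison, namely that the induced functor $F\colon S^{-1}\cat R_\cat M\to S^{-1}\cat R_{\overline{\cat M}}$ is "full on $\cat M_S$ and zero elsewhere". Here the calculus of fractions is the tame one for localized 2-rings (\cite[Prop.~2.31]{MR3163513}): denominators lie in $S$ itself and all apexes are invertible. Surjectivity then follows from a short grading argument — since $s\in S\subseteq\cat R_\cat M$ is nonzero, $g\otimes k^{-1}\in\cat M$, and since $h\otimes g^{-1}\in\cat M$ and $\cat M$ is a 2-monoid, $h\otimes k^{-1}\in\cat M$, so the numerator automatically lies in $\cat R_\cat M$. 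Note also that your proposal never addresses the vanishing $S^{-1}\cat R_\cat M(g,h)=0$ when $h\otimes g^{-1}\notin\cat M_S$, which is needed for $\Phi$ to be an equivalence onto $\cat R_{\cat M_S}$ (whose Hom-groups are zero there by definition); the paper proves this as the "zero elsewhere" half of the claim about~$F$. I recommend restructuring part~(2) along these lines: cite the full case once, and do all remaining work inside localized 2-rings.
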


\begin{proof} For reference, recall the relationship between the categories we are dealing with before localizing:
\[
\xymatrix@C=10pt@R=10pt{
& \cat M
	 \ar@{}[r]|-{\textstyle{\subset}}
	 \ar@{}[d]|{\rotatebox{-90}{$\subset$}}
 & 
 \overline{\cat M} 
 	\ar@{}[r]|-{\textstyle{\subset}}
	\ar@{}[d]|{\rotatebox{-90}{$\subset$}} &
 \2Pic(\cat K)
 	\ar@{}[d]|{\rotatebox{-90}{$\subset$}} \\
S 
	\ar@{}[r]|-{\textstyle{\subset}} & 
\cat R_\cat M
	\ar@{}[r]|-{\textstyle{\subset}} &
 \cat R_{\overline{\cat M}}
 	\ar@{}[r]|-{\textstyle{\subset}} &
 \cat K
}
\]
By \Cref{rmk:enlarge-R}, $S$ is also a homogeneous multiplicative system of~$\cat R_{\overline{\cat M}}$. 
Note that the special case of both claimed statements when $\cat R_\cat M$ is a full central 2-ring (\ie when $\cat M$ is a 2-group) is precisely the content of \cite[Thm.\,3.6]{MR3163513}.  
Hence both statements hold true for the 2-ring $\cat R_{\overline{\cat M}}$ of~$\cat K$ and its multiplicative system~$S$. 

As the first statement is agnostic about whether we use $\cat R_{\overline{\cat M}}$ or $\cat R_{\cat M}$, there is nothing more to prove there.
For the second statement, we know from the already-proved full case that the \emph{full} central 2-ring of $S^{-1}_\cat K\cat K$ spanned by the objects of $\overline{\cat M}$ is $S^{-1}\cat R_{\overline{\cat M}}$.
We also have the following commutative square of morphisms of 2-rings, where $F$ is the functor induced by the universal property of localization:
\[
\xymatrix{
\cat R_\cat M
	 \ar[r]^-{\textrm{incl.}}
	 \ar[d]_{\textrm{loc.}} & 
\cat R_{\overline{\cat M}} 
	\ar[d]^{\textrm{loc.}} \\
S^{-1} \cat R_\cat M
	 \ar[r]^-F &
S^{-1} \cat R_{\overline{\cat M}}
}
\]
(All four functors are the identity on objects.) 
We claim that $F$, similarly to the inclusion at the top, is `full on $\cat M_S$ and zero elsewhere', meaning  that the mapping 
\begin{equation} \label{eq:F-on-Homs}
F\colon S^{-1}\cat R_\cat M(g,h) \longrightarrow S^{-1}\cat R_{\overline{\cat M}}(g,h)
\end{equation}
on Homs-groups is bijective if $h\otimes g^{-1} \in \cat M_S$ and otherwise $S^{-1}\cat R_\cat M(g,h)  = 0$.
This would immediately imply that $F$ provides an equivalence between $S^{-1}\cat R_\cat M$ and the $\cat M_S$-graded central 2-ring of~$S^{-1}_\cat K\cat K$, as per the second statement of the theorem.
It only remains to prove the above claim, for which we use the calculus of (right) fractions for localized 2-rings (\cite[Prop.\,2.31]{MR3163513}).
We are going to suppose that $S$ contains no zero morphisms, because otherwise all $S$-localized categories would be trivial and all claims would be trivially true.

To begin with, let us show the ``zero elsewhere'' part, \ie that $h \otimes g^{-1}\not\in \cat M_S$ implies $S^{-1} \cat R_\cat M(g,h)=0$.
Or rather,  let us prove the contrapositive:
\begin{equation} \label{eq:contrapositive}
S^{-1} \cat R_\cat M(g,h)\neq 0
\quad \Rightarrow \quad
h\otimes g^{-1} \in \cat M_S .
 \end{equation}
Thus suppose there exists a nonzero fraction
\begin{equation} \label{eq:fraction}
\xymatrix{ g & k \ar[l]_-s^-{\in S} \ar[r]^-a & h }
\end{equation}
with $s\in S$ and $a \in \cat R_{\cat M}$. 
Then \emph{a~fortiori}~$a$ is nonzero, implying by the definition of $\cat R_\cat M$ that $h\otimes k^{-1}\in \cat M$. 
Since $s$ is an isomorphism $k\overset{\sim}{\to}g$ in $S^{-1}\cat R_{\cat M}$, we deduce that $h\otimes g^{-1} \cong h \otimes k^{-1} \in \cat M_S$ as claimed.

Observe for later use that the functor $F$ is an isofibration, \ie that we can lift isomorphisms along~$F$. 
Indeed, an isomorphism $g\to h$ in $S^{-1}\cat R_{\overline{\cat M}}$ is represented by a fraction \eqref{eq:fraction} where both $s$ and $a$ are in~$S$, hence in particular $a\in \cat R_\cat M$ so that the same fraction already represents an isomorphism $g\to h$ in $S^{-1}\cat R_\cat M$.

Now suppose instead that $h\otimes g^{-1} \in \cat M_S$. 
This means that there is an isomorphism $h\otimes g^{-1}\cong m$ in $S^{-1}\cat R_{\overline{\cat M}}$ for some $m\in \cat M$;  since $F$ is an isofibration, it may be lifted to an isomorphism $h\otimes g^{-1}\cong m$ in $S^{-1}\cat R_\cat M$.
Therefore in showing \eqref{eq:F-on-Homs} is bijective we may and will suppose that $h\otimes g^{-1} \in \cat M$.

For injectivity, consider a fraction $as^{-1}\in S^{-1}\cat R_\cat M(g,h)$ as in \eqref{eq:fraction} such that $F(as^{-1})=0$, 
that is one with $s\in S(k,g)$, $a\in \cat R_\cat M(k, h)$, and such that $at =0 $ in $\cat R_\cat M$ for some $t\in S$.
Since $st\in S$, we then have an equivalence of fractions in~$\cat R_\cat M$
\[
\xymatrix@R=7pt{
& k \ar[dl]_s \ar[dr]^a& \\
g & & h \\
& \ell \ar[ul]^{st} \ar[uu]^t \ar[ur]_0 & 
}
\]
showing that $as^{-1}= 0$ in $S^{-1}\cat R_\cat M$. 
As $F$ is additive, this proves that \eqref{eq:F-on-Homs} is injective.
For surjectivity, suppose now that the fraction \eqref{eq:fraction} represents an arbitrary element of $S^{-1}\cat R_{\overline{\cat M}}(g,h)$, with $a\in \cat R_{\overline{\cat M}}(k,h)$ and $s\in S$. 
Since $S\subseteq \cat R_{\cat M}$ and $s\neq 0$, we must have $g\otimes k^{-1} \in \cat M$. 
Since $h \otimes g^{-1} \in \cat M$ by the current hypothesis and $\cat M$ is a 2-monoid, we have
\[
h \otimes k^{-1} \cong \underbrace{(h \otimes g^{-1})}_{\in \cat M} \otimes \underbrace{ (g \otimes k^{-1}) }_{\in \cat M} \; \in \; \cat M
\]
and therefore $a\in \cat R_\cat M$. 
Thus $as^{-1} \in S^{-1}\cat R_\cat M(g,h)$, which proves that  in this case~\eqref{eq:F-on-Homs} is also surjective and hence bijective.
\end{proof} 

\begin{cor} 
\label{cor:central_loc_in_context}
Let $M\subseteq \Pic(\cat K)$ be any submonoid and let $S$ be any set of morphisms between invertible objects whose isomorphism class lies in~$M$.
Let $\cat J_S$ be the tt-ideal of~$\cat K$ generated by $\{\cone(s)\mid s\in S\}$.
Then $\cat K/\cat J_S$ is a central localization in the above sense.
More precisely, writing $\cat R_M\subset \cat K$ for the corresponding central 2-ring (see \Cref{rmk:bijection}):
\begin{enumerate}
\item $\cat J_S = \cat J_{\tilde S}$, where $\tilde S$ is the homogeneous multiplicative system of~$\cat R_M$ generated by~$S$. 
\item The central 2-ring $\cat R_{q( M)}\subset \cat K/\cat J_S$ associated to the image $q(M)\subseteq \Pic(\cat K/\cat J_S)$ is equivalent to $\tilde{S}^{-1}\cat R_M$, the localization of $\cat R_M$ at~$\tilde S$ as a 2-ring.
\item Let  $q\colon \cat K\to \cat K/\cat J_S$ be the Verdier quotient. Then the naturality square 
\[
\xymatrix{
\Spc(\cat K/ \cat J_S) \;
	 \ar[d]_{\rho_{q (M)}}
	 \ar@{^{(}->}[rr]^-{\Spc(q)} && 
\Spc(\cat K)
	 \ar[d]^{\rho_M} \\
\Spch(\cat R_{q (M)})
	 \ar@{}[r]|-{\textstyle \cong} &
\Spch(\tilde{S}^{-1}\cat R_M) \;
	 \ar@{^{(}->}[r] & 
\Spch(\cat R_M)
}
\]
is a pullback of sets (and hence of spectral spaces), that is, $q$ induces a homeomorphism
$\Spc(\cat K/\cat J_S) \cong \{\cat P\in \Spc(\cat K)\mid  \rho_\cat M(\cat P) \cap S = \emptyset \}$.
\end{enumerate}
\end{cor}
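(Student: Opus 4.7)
The strategy is to reduce each claim to \Cref{thrm:central-loc} applied to the saturated multiplicative system $\tilde S\subseteq \cat R_M$, after first checking in~(1) that passing from $S$ to $\tilde S$ costs nothing at the level of tt-ideals. The inclusion $\cat J_S\subseteq \cat J_{\tilde S}$ is trivial, and for the reverse I would argue stepwise along the construction of $\tilde S$ from $S$: identities and isomorphisms have trivial cone; for a twist $s\otimes g$ by an object $g\in \cat R_M$ (hence invertible in $\cat K$) one has $\cone(s\otimes g)\cong \cone(s)\otimes g\in \cat J_S$; and for a composition $s\circ t$ the octahedral axiom places $\cone(s\circ t)$ in an exact triangle between $\cone(t)$ and $\cone(s)$, so it too lies in $\cat J_S$. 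Thus $\cat J_{\tilde S}\subseteq \cat J_S$, establishing~(1).

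With~(1) in hand, \Cref{thrm:central-loc}(1) gives a canonical equivalence $\tilde S_\cat K^{-1}\cat K\simeq \cat K/\cat J_S$ through which I can transport the second claim. Under this equivalence, the essential image $\cat M_{\tilde S}$ of the full sub-2-monoid $\cat M\subseteq \2Pic(\cat K)$ corresponding to $M$ (via \Cref{rmk:bijection}) is precisely the replete sub-2-monoid $q(\cat M)\subseteq \2Pic(\cat K/\cat J_S)$ whose objects have isomorphism classes in $q(M)$, so $\cat R_{q(M)}=\cat R_{\cat M_{\tilde S}}$. Claim~(2) is then exactly \Cref{thrm:central-loc}(2).

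For claim~(3), commutativity of the square follows from the naturality of the comparison map (\Cref{thrm:comparison-map}) applied to the quotient functor $q$. Balmer's localization theorem identifies the top horizontal map with the inclusion $\{\cat P\mid \cat J_S\subseteq \cat P\}\hookrightarrow \Spc(\cat K)$; by the definition of $\cat J_S$ and the explicit formula for $\rho_M$, this is the same as $\{\cat P\mid \rho_M(\cat P)\cap S=\emptyset\}$. In parallel, the bottom horizontal map realizes $\Spch(\tilde S^{-1}\cat R_M)$ as the subset of primes of $\cat R_M$ avoiding $\tilde S$, which coincides with the primes avoiding $S$, since the complement of a homogeneous prime is closed under composition, twists and identities. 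The two horizontal inclusions therefore cut out the same subset of $\Spc(\cat K)$ along the diagonal map~$\rho_M$, which yields the pullback description both at the level of sets and, by a standard check, of spectral spaces.

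The main obstacle I expect is in cleanly formulating~(2): making sure the equivalence $\cat R_{q(M)}\simeq \tilde S^{-1}\cat R_M$ is really the one induced by the universal property of 2-ring localization, so that its composite with the inclusion $\cat R_{q(M)}\hookrightarrow \cat K/\cat J_S$ matches the image under $q$ of $\cat R_M\hookrightarrow \cat K$ (this is what is needed to run the naturality argument in~(3)). Granted this, claim~(1) is the elementary bridge that makes \Cref{thrm:central-loc} applicable in the present non-multiplicatively-saturated setup, and the remainder is bookkeeping with the formulas already displayed in \Cref{thrm:comparison-map}.
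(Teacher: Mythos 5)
Your proposal is correct and follows essentially the same route as the paper: part (1) via the octahedral axiom, the tensor-ideal property, and the vanishing cone of isomorphisms; part (2) by transport through \Cref{thrm:central-loc}; and part (3) by combining the naturality of \Cref{thrm:comparison-map} with the standard tt-geometric identification of $\Spc(\cat K/\cat J_S)$ with the primes containing $\cat J_S$, i.e.\ those $\cat P$ with $\rho_M(\cat P)\cap S=\emptyset$. The compatibility concern you raise at the end is routine and the paper likewise does not belabour it.
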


\begin{proof}
Part~(1): Clearly $\cat J_S \subseteq \cat J_{\tilde{S}}$ since $S\subseteq \tilde S$. 
Note that the set of morphisms whose cone is contained in $\cat J_S$ includes all isomorphisms (because $0\in \cat J_S$), contains~$S$ (by definition), and is closed under composites (by the octahedral axiom) and twists ($\cat J_S$ is a tensor ideal).
On the other hand, $\tilde{ S}$ is obtained by adding to $S$ all isomorphisms and then closing in $\Mor(\cat R_\cat M)$  under composites of maps and twists by objects. 
We deduce the other inclusion $\cat J_S \supseteq \cat J_{\tilde{S}}$ too, which proves part~(1). 

Part~(2) is immediate from part~(1) together with part~(2) of \Cref{thrm:central-loc}.
To see this, write $\cat M = \cat M_M \subseteq \2Pic(\cat K)$ for the corresponding sub-2-monoid (so that $\cat R_M= \cat R_\cat M$), and note that $\cat R_{q(M)} = \cat R_{q(\cat M)}$ in the localized tt-category.

Part~(3) is easy and similar to \cite[Thm.\,5.4]{balmer:sss}).
Indeed, the naturality of the square is part of \Cref{thrm:comparison-map} and $q$ induces a homeomorphism $\Spc(\cat K/\cat J_S)\cong \{\cat P\in \Spc(\cat K)\mid \cat J_S\subseteq \cat P\}$ by general tt-geometry.
The latter condition $\cat J_S\subseteq \cat P$ on~$\cat P$ is equivalent to $\cone(s)\in \cat P$ for all $s\in S$ (by definition of~$\cat J_S$), which is equivalent to $S \cap \rho_\cat M(\cat P)=\emptyset$ (by definition of~$\rho_\cat M$ ), as claimed.
\end{proof}
\begin{center} $***$ \end{center}

Finally, let us compare our central localization with Balmer's. 
Let $\omega$ be any invertible object of~$\cat K$ and let $\cat R_\omega$ be the associated full $\omega$-twisted central 2-ring. 
Recall from \Cref{exa:original-graded} that Balmer's $\omega$-twisted graded central ring $\mathrm R^\omega_\cat K$ is part of a tightening of $\cat R_\omega$, so that \Cref{thrm:agreement} provides a homeomorphism of spectra
\[
\Spch(\mathrm R^\omega_\cat K)\cong \Spc(\cat R_\omega)
\] 
which identifies the respective comparison maps. Then:

\begin{cor} \label{cor:central-locs-agreement}
There is a canonical restriction-extension bijection between the multiplicative systems $S$ (containing all invertible homogeneous elements) in the $\mathbb Z$-graded ring~$\mathrm R^\omega_\cat K$ and the homogeneous multiplicative systems $\tilde S$ of~$\cat R_\omega$. 
Moreover, for every such system the central localization of $\cat K$ in the sense of \cite{balmer:sss} (where each $\omega$-graded Hom gets localized as an $\mathrm R^\omega_\cat K$-module) coincides with the central localization in the sense of~\cite{MR3163513}:
\[
S^{-1} \cat K \cong \tilde S^{-1}_\cat K \cat K.
\]
\end{cor}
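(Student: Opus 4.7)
The plan is to recognize both sides as Verdier quotients of $\cat K$ by the \emph{same} thick tensor ideal, mediated by the tightening of $\cat R_\omega$ by $\mathrm R^\omega_\cat K$ recorded in \Cref{exa:original-graded}. Write $\varphi = \sqcup_n \varphi_n$ for that tightening, a degree-wise bijection between homogeneous elements of $\mathrm R^\omega_\cat K$ and morphisms $\unit \to g_\gamma$ (for $g_\gamma$ a chosen representative object) in $\cat R_\omega$.

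For the bijection of multiplicative systems, given a multiplicative system $S$ of homogeneous elements in $\mathrm R^\omega_\cat K$ containing all invertible ones, I would associate to it the smallest homogeneous multiplicative system $\tilde S$ of $\cat R_\omega$ containing $\varphi(S)$; that is, the closure of $\varphi(S)$ under twists, composition, and isomorphisms. Conversely, given $\tilde S$, I would set $S := \varphi^{-1}(\tilde S)$, using that $\tilde S$ is translation-closed and is therefore determined by its intersection with the morphisms $\unit \to g_\gamma$. The tightening axioms of \Cref{def:tightening} are exactly what makes these assignments mutually inverse: axiom~(1) matches ring multiplication by a degree-$0$ element with composition in $\cat R_\omega$, and axiom~(2) generalizes this to arbitrary degrees up to the translation equivalence of \Cref{rmk:translate}.

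For the coincidence of localizations, by \Cref{thrm:central-loc}(1) we have a canonical isomorphism $\tilde S^{-1}_\cat K \cat K \simeq \cat K/\cat J_{\tilde S}$, where $\cat J_{\tilde S}$ is the tt-ideal of $\cat K$ generated by $\{\cone(\tilde s) \mid \tilde s \in \tilde S\}$. Balmer's central localization $S^{-1}\cat K$ in \cite{balmer:sss} is similarly presented as the Verdier quotient $\cat K/\cat J_S$, where $\cat J_S$ is the tt-ideal generated by the cones of the morphisms $s \in S$ (each $s$ viewed, via $\varphi$, as a morphism $\unit \to \omega^{\otimes n}$ in $\cat K$). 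Since $\varphi$ embeds each element of $S$ as a morphism of $\cat K$ whose cone is unchanged, the equality $\cat J_S = \cat J_{\tilde S}$ is precisely \Cref{cor:central_loc_in_context}(1), and the desired isomorphism $S^{-1}\cat K \cong \tilde S^{-1}_\cat K \cat K$ follows.

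The main obstacle is the first step: the notion of ``multiplicative system'' is ring-theoretic on the one side and categorical (with twist- and translation-closure) on the other, so setting up the bijection rigorously requires navigating the translation equivalence relation and applying the tightening axioms carefully to identify ring multiplication with composition up to translation. Once that correspondence is in hand, the identification of the two localizations is essentially a formal consequence of the already established \Cref{thrm:central-loc}(1) and \Cref{cor:central_loc_in_context}(1).
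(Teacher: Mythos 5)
Your overall strategy coincides with the paper's: extension sends $S$ to the homogeneous multiplicative system of $\cat R_\omega$ generated by $\varphi(S)$, restriction is $\varphi^{-1}$, and the two localizations are then identified as Verdier quotients of $\cat K$ by one and the same tt-ideal, using \Cref{thrm:central-loc}(1), Balmer's presentation of $S^{-1}\cat K$ as $\cat K/\cat J_S$, and \Cref{cor:central_loc_in_context}(1). The second half of your argument is complete and is exactly how the paper concludes.

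The first half, however, contains a genuine gap which you flag as "the main obstacle" but do not close. The entire substance of the paper's proof is the claim that the homogeneous multiplicative system $\mathrm e(S)$ generated by $\varphi(S)$ equals the mere \emph{translation closure} of $\varphi(S)$ --- equivalently, that this translation closure is already stable under composition. Without this, $\mathrm e(S)$ could a priori contain composites $\unit\to g_\gamma$ that are not translates of any $\varphi(s)$ with $s\in S$, in which case $\varphi^{-1}(\mathrm e(S))$ would be strictly larger than $S$ and restriction and extension would fail to be mutually inverse. Asserting that "the tightening axioms are exactly what makes these assignments mutually inverse" does not establish this: the actual verification takes two composable translates of $\varphi(r)$ and $\varphi(s)$, normalizes the twisting objects so that the mismatch becomes an automorphism $w$ of the middle representative object $g_\gamma$, applies \Cref{lem:magic} to trade $w$ for an automorphism $\tilde w=\varphi(t)$ of $\unit$ with $t$ an invertible homogeneous element --- this is precisely where the hypothesis that $S$ contain all invertible homogeneous elements is used --- and only then invokes tightening axiom (2) to recognize the composite as a translate of $\varphi(sr)\in\varphi(S)$. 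A similar (easier) application of \Cref{lem:translates-from-1} and \Cref{lem:magic} is needed to check $\mathrm r\circ\mathrm e=\id$. These steps are the mathematical content of the corollary; your proposal correctly locates them but leaves them unproven.
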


\begin{proof}
The correspondence of multiplicative systems works in fact for any tightening $(R,G,\pi, \{g_\gamma\}, \{\varphi_x\})$ of any central 2-ring $\cat R_\cat M$ of~$\cat K$, in a similar way to the correspondence of ideals in \Cref{thrm:agreement} but with an easier proof we now sketch.
If $S\subseteq R^\hmg$ is a multiplicative system of the graded ring (that is $(R^\hmg)^\times\subseteq S$ and $SS\subseteq S$), we let $\mathrm e(S)$ be the homogeneous system of $\cat R_\cat M$ generated by $\varphi(S)$ (with $\varphi:=\sqcup_{x} \varphi_{x\in G}\colon R^\hmg \to \cat R_\cat M$ as before).
Thus $\mathrm e(S)$ is the smallest set of morphisms of $\cat R$ containing~$\varphi(S)$, containing all isomorphisms, and stable under composition and under twists by any objects of~$\cat R_\cat M$. 

We claim that $\mathrm e(S)$ is equal to the closure of $\varphi(S)$ under translation in~$\cat R_\cat M$.
Indeed, denote the latter closure by~$S'$. We clearly have $S'\subseteq \mathrm e(S)$ by definition, and to prove $\mathrm e(S)\subseteq S'$ (and hence the claim) it suffices to show that $S'$ is closed under composition. 
Thus let $a\colon g\to h$ and $b\colon h\to k$ be two composable maps in~$S'$. 
We must show that $ba$ is a translate of some element of~$\varphi(S)$, and by applying $g^{-1}\otimes -$ to the problem we may assume that $g=\unit$.
Recall (\Cref{rmk:translate}) that $a$ and $b$ may be written as follows for some $r,s\in S$, some objects $\ell,\ell' \in \cat R_\cat M$, and some isomorphisms $u,v,u',v'$ (where $\gamma = \pi(|r|)$ and $\delta=\pi(|s|)$):
\[
\xymatrix{
\ell \otimes \unit 
	\ar[r]^-{\ell \otimes \varphi(r)} &
\ell \otimes g_\gamma 
	\ar[d]^\simeq_v
	\ar@{-->}@/^4ex/[dd]^(.3){=:\,w} & \\
\unit 
	\ar[r]^-a
	\ar[u]^u_\simeq
	\ar@{-->}[ur]
	\ar@{-->}[dr] &
h 
	\ar[r]^(.6){b}
	\ar[d]_{u'}^\simeq &
k \\
& 
\ell' \otimes \unit
	\ar[r]^-{\ell'\otimes \varphi(s)} &
\ell' \otimes g_\delta
	\ar[u]^\simeq_{v'} 
}
\]
In this diagram we can moreover assume $\ell=\unit$ (because of the isomorphism~$u$), hence that $\ell' = g_\gamma$ (using $u' \circ v$).
With these hypotheses $w:= u' v$ is an automorphism of~$g_\gamma$, hence we may apply \Cref{lem:magic} to the above triangle of segmented maps in order to obtain the next commutative diagram with $\tilde w=g_\gamma^{-1}\otimes w$:
\[
\xymatrix@C=12pt{
\unit
	\ar[rrr]^-a
	\ar[d]_{\tilde w}^\simeq &&& g_\gamma \ar[rrr]^-b &&& k \\
\unit 
	\ar[rr]^-u_-\simeq &&
\unit
	\ar[rr]^-{\varphi(r)} &&
g_\delta
	\ar[rr]^-{g_\gamma \otimes \varphi(s)} &&  
g_\gamma \otimes g_\delta 
	\ar[u]_{v'}^\simeq
}
\]
This diagram displays $ba$ as a translate of $(g_\gamma \otimes \varphi(s))\circ \varphi(r)$, which is a translate of $\varphi(sr)$ by definition of a tightening; hence by transitivity~$ba$ is a translate of $\varphi(sr)$ which lies in $\varphi(S)$, as claimed.

In the other direction, if $\tilde S \subseteq \cat R_\cat M$ is a homogeneous multiplicative system we simply set $\mathrm r(\tilde S) := \varphi^{-1}( \tilde S)$,  
which is clearly a multiplicative system of~$R$ containing~$(R^\hmg)^\times$. 
Using the above claim and the fact that each $\tilde S$ is translation-closed, it is now straightforward to check that~$\mathrm r$ and~$\mathrm e$ are mutually inverse bijections.

Finally, the agreement of localized categories (the ``moreover'' part) follows from the isomorphism $\tilde S^{-1}_\cat K \cat K \cong \cat K/\cat J_{\tilde S}$ of \Cref{thrm:central-loc} and the isomorphism $S^{-1}\cat K \cong \cat K/\cat J_S$ of \cite[Thm.\,3.6]{balmer:sss}, together with the immediate observation that the thick tensor ideals $\cat J_S$ and $\cat J_{\tilde S}$ are identical.  
\end{proof}

\printbibliography
\enlargethispage{1.0\baselineskip}
\end{document}